\theoremstyle{plain}
\newtheorem{Th}{Theorem}[section]
\newtheorem{Lem}[Th]{Lemma}
\newtheorem{Conj}[Th]{Conjecture}
\newtheorem*{Lem*}{Lemma}
\newtheorem{Cor}[Th]{Corollary}
\newtheorem{Prop}[Th]{Proposition}
\newtheorem*{theorem*}{Theorem}
 \theoremstyle{definition}
 \newtheorem*{Prob*}{Problem}
 \newtheorem*{Ex*}{Example}
 \newtheorem*{Def*}{Definition}
 \newtheorem{Rem}[Th]{Remark}
\newtheorem{Def}[Th]{Definition}
\title{The Lefschetz standard conjectures for $4d$-dimensional Kummer-type hyper-K\"{a}hler varieties}
\date{}
\author{Josiah Foster}
\begin{document}
\begin{abstract}
    Contingent upon a conjecture of Buchweitz and Flenner, we prove the Lefschetz standard conjectures for $4d$-dimensional projective varieties of generalized Kummer deformation type.
\end{abstract}
\maketitle

\section{Introduction}
\subsection{The standard conjectures}
  Let $X$ be a smooth projective variety of dimension $n$ over $\mathbb{C}$, and let the cohomological operator $L_{\alpha}$ be given by taking the cup product with a polarization $\alpha\in H^{1, 1}(X, \mathbb{Q})$. By the Hard Lefschetz theorem, there exists an isomorphism
\[
L_{\alpha}^{n-k}: H^k(X, \mathbb{Q})\xrightarrow{\sim} H^{2n-k}(X, \mathbb{Q})
\]
for all $k\leq n$. The Lefschetz standard conjecture (LSC) over the complex numbers is the following statement:

\begin{Conj}[{\cite[p. 196]{Gr}}]\label{BX}
For each $k\leq n$, there exists an algebraic self-correspondence \[[\mathcal{Z}]\in H^{2k}(X\times X, \mathbb{Q}),\] arising from a codimension-$k$ cycle $\mathcal{Z}\in \mathrm{CH}^k(X\times X)_{\mathbb{Q}}$, such that the isomorphism 
\begin{equation}\label{inverse hard lefschetz isomorphisms}
[\mathcal{Z}]^*: H^{2n-k}(X, \mathbb{Q})\xrightarrow{\sim} H^{k}(X, \mathbb{Q})
\end{equation}
is the inverse of the isomorphism $L^{n-k}_{\alpha}$.
\end{Conj}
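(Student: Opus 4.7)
The plan is to reduce LSC to algebraicity of a single dual Lefschetz operator and then to produce such an operator from algebraic correspondences on a moduli space of sheaves on an abelian surface. As a first reduction, I use the standard fact that for a projective hyper-K\"{a}hler variety, LSC follows once one exhibits, for some ample class $\alpha$, an algebraic self-correspondence inducing the dual Lefschetz operator $\Lambda_\alpha$ of the Lefschetz $\mathfrak{sl}_2$-triple $(L_\alpha, H, \Lambda_\alpha)$: indeed $(L_\alpha^{n-k})^{-1}$ is then a universal polynomial in $L_\alpha$ and $\Lambda_\alpha$. Since LSC is deformation-invariant across the projective moduli of hyper-K\"{a}hler varieties of a given deformation type, it suffices to treat one well-chosen point, which I would take to be a generalized Kummer variety $K_n(A)$ of dimension $4d$ attached to a very general polarized abelian surface $A$.

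To produce algebraic correspondences on $X\times X$, I would use Yoshioka's realization of $X$ as a smooth Albanese fiber of a moduli space $M_v(A)$ of stable sheaves on $A$ with an appropriate Mukai vector $v$. The universal sheaf $\mathcal{E}$ on $A \times M_v(A)$ supplies a Fourier--Mukai kernel whose cohomological K\"{u}nneth components, together with their iterated convolutions, are algebraic; after restriction to $X \times X$ they yield a large family of algebraic self-correspondences on $X$. The content is then to show that the Looijenga--Lunts--Verbitsky (LLV) operator $\Lambda_\alpha$ lies in the linear span of the resulting cohomological endomorphisms for some ample class $\alpha$.

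Here is where the conjecture of Buchweitz and Flenner enters. It furnishes an explicit formula for the Chern character $\mathrm{ch}(\mathcal{E})$ (and more generally for the cohomological Fourier--Mukai transform) in terms of Atiyah-type classes, and in particular identifies specific K\"{u}nneth components of $\mathrm{ch}(\mathcal{E})$ with generators of the LLV algebra $\mathfrak{g}(X)$ acting on $H^\ast(X, \mathbb{Q})$. Combining this with the known structure of $\mathfrak{g}(X)$ for Kummer type, one extracts a suitable $\Lambda_\alpha$ as an algebraic correspondence and concludes by the reduction above.

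The main obstacle is the restriction step: the LLV algebra of the total space $M_v(A)$ is strictly larger than that of the Albanese fiber $X$, reflecting the extra $H^1$ coming from the abelian surface direction. One must isolate the piece of $\mathrm{ch}(\mathcal{E})|_{A \times X}$ that both survives restriction to $X\times X$ and implements $\Lambda_\alpha$ on $H^\ast(X, \mathbb{Q})$. Without the Buchweitz--Flenner formula, this piece is only identifiable in principle, not by an explicit algebro-geometric expression; the conjecture is precisely what bridges this gap and makes the extraction of $\Lambda_\alpha$ as an algebraic correspondence tractable.
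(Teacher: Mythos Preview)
Your proposal has several genuine gaps, and in particular it misidentifies both the content of the Buchweitz--Flenner conjecture and the mechanism by which the paper extracts $\Lambda_\alpha$.

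First, the Buchweitz--Flenner conjecture as used in the paper (Conjecture~\ref{Buchweitz Flenner conjecture}) is \emph{not} a formula for Chern characters in terms of Atiyah-type classes, nor does it identify K\"{u}nneth components with LLV generators. It is a deformation-theoretic statement: if a sheaf $\mathcal{B}$ is semiregular and $\kappa(\mathcal{B})$ remains of Hodge type along a family, then $\mathcal{B}$ extends to a (possibly twisted) sheaf over that family. The paper uses it solely to propagate a specific Fourier--Mukai kernel across the moduli of Hodge-isometric pairs; it supplies no algebraic identities of the sort you describe.

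Second, your assertion that ``LSC is deformation-invariant across the projective moduli'' is not known and is in fact the main difficulty. The paper does not assume this. Instead, it deforms the \emph{correspondence itself}: it shows the kernel $\mathcal{F}^K$ is semiregular (Corollary~\ref{semiregular sheaf}), invokes the Buchweitz--Flenner conjecture to deform it locally, and then uses Markman's moduli space $\mathfrak{M}_\psi$ of rationally Hodge-isometric pairs together with generic diagonal twistor paths (Section~\ref{deforming derived equivalences section}) to reach every projective $4d$-dimensional Kummer-type variety with an algebraic kernel in hand.

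Third, the source of the degree-lowering correspondence in the paper is not the universal sheaf on $A\times M_v(A)$ restricted to an Albanese fiber. That approach is essentially the one in \cite{Foster}, and as Remark~\ref{putting it all together} records, it only yields LSC in degrees $<n+1$. The paper instead uses Magni's derived equivalence $D^b(Kum_n(A))\xrightarrow{\sim}D^b(Kum_n(\widehat{A}))$, available precisely when $n$ is even, built from a simple semihomogeneous bundle on $W\times\widehat{W}$ via BKR. The crucial feature, established via Taelman's Theorem~\ref{Taelman A} and Proposition~\ref{degree reversing hodge isometries}, is that the associated isometry $\phi_\kappa$ is \emph{degree-reversing}, so conjugation by it sends $L_{f}$ to a scalar multiple of $\Lambda_{f}$ (Corollary~\ref{Derived equivalent LSC}). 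This is how $\Lambda_\alpha$ is exhibited as algebraic, not by reading it off K\"{u}nneth components of a universal sheaf.
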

\begin{Rem}
    Conjecture \ref{BX} is independent of the choice of polarization on $X$ and is in fact equivalent to the existence of the inverse isomorphisms (\ref{inverse hard lefschetz isomorphisms}). Over $\mathbb{C}$, Conjecture \ref{BX} implies that homological and numerical equivalence coincide, and that the K\"{u}nneth components of the diagonal $\Delta_X\subset X\times X$ are algebraic (see for example \cite{Kleiman68}). It has important applications in the theory of motives (see \cite[Chapter 5]{Andre}), and in degree $2$ it implies a weakened formulation of the period-index conjecture (\cite{dJP}). In addition, the LSC implies the variational Hodge conjecture by \cite[Theorem 7]{VoisinHodge}.
\end{Rem}

There are few examples of classes of projective varieties for which the LSC holds. One clear class of examples are varieties $X$ for which the cycle class map $Ch(X)\otimes \mathbb{Q}\rightarrow H^*(X, \mathbb{Q})$ is an isomorphism (e.g. Grassmanians, flag varieties, etc.). The conjecture also holds for Abelian varieties by \cite{Lieberman}. Charles and Markman in \cite{CharlesMarkman} prove the LSC for irreducible holomorphic symplectic varieties deformation equivalent to Hilbert schemes of points on $K3$ surfaces. An alternative proof of their result is known (Corollary \ref{Derived equivalent LSC}). In \cite{Arapura}, Arapura has proven the LSC for moduli spaces of stable vector bundles over a smooth projective curve and for the Hilbert scheme of points on any smooth projective surface, as well as for uniruled threefolds and unirational fourfolds. The results of \cite{Foster} generalize Arapura's result by demonstrating the LSC for projective varieties deformation equivalent to a moduli space of Gieseker-stable sheaves on an Abelian surface with primitive, positive, and effective Mukai vector. As a corollary, the LSC holds for projective varieties deformation equivalent to a generalized Kummer variety in certain cohomological degrees. For important results on the standard conjectures for Lagrangian fibered IHSMs, see \cite{ACLS}.

\subsection{Kummer-type irreducible holomorphic symplectic manifolds}

\begin{Def}
    An \textit{irreducible holomorphic symplectic manifold (IHSM)} (or compact hyper-K\"{a}hler manifold) is a simply connected compact K\"{a}hler manifold such that $H^{2, 0}(X)$ is spanned by a unique, nowhere-vanishing holomorphic $2$-form.
\end{Def}

To date, there are four known deformation types of IHSMs. Two were constructed in \cite{Beauville} and arise in each even complex dimension. They are varieties deformation equivalent to Hilbert schemes of points on $K3$ surfaces and deformation equivalent to generalized Kummer varieties. Two additional examples in dimensions $10$ and $6$ were constructed in \cite{OG10} and \cite{OG6} respectively. They arise as desingularizations of particular moduli spaces of sheaves on symplectic surfaces. 

For $X$ an IHSM, the second integral cohomology $H^2(X, \mathbb{Z})$ carries an integral, symmetric, primitive, non-degenerate bilinear pairing $q_X$ of signature $(3, b_2-3)$, known as the Beauville-Bogomolov-Fujiki (BBF) pairing \cite[Theorem 5]{Beauville}, which is defined by the property that 
for all $\alpha\in H^2(X, \mathbb{C})$, we have the equation
\begin{equation}\label{BBF form}
 \int_X \alpha^{2n} = c_Xq_X(\alpha)^n
\end{equation}
where $c_X\in \mathbb{Q}_{\geq 0}$ is the Fujiki constant. This is a generalization of the pairing on the second cohomology of a $K3$ surface.

In comparison to $K3$-surfaces, general IHSMs exhibit a weakened version of the global Torelli theorem (see \cite{Huybrechts1} and \cite{VerbitskyTorelli}).

\begin{Def}
    Let $A$ be an Abelian surface, and $A^{[n+1]}$ the Hilbert scheme of zero-dimensional length-$(n+1)$ subschemes of $A$. For $h: A^{[n+1]}\rightarrow A$ the composition of the Hilbert-Chow morphism and summation in the group $A$, the fiber $Kum_n(A): = h^{-1}(0)$ is $2n$-dimensional IHSM known as a \textit{generalized Kummer variety}. An IHSM deformation-equivalent to $Kum_n(A)$ is said to be of \textit{generalized Kummer deformation type}.
\end{Def}
Note that the Abelian surface $A$ has a four-dimensional deformation space, while $Kum_n(A)$ deforms in its five-dimensional period domain (as $b_2(Kum_n(A)) = 7$). Therefore, a generic variety of generalized Kummer deformation type is not the Kummer variety arising from an Abelian surface. 
Let us describe the connection between generalized Kummer varieties and moduli spaces of sheaves on an Abelian surface $A$. 

\begin{Def}
    The \textit{Mukai lattice} of $A$ consists of the following data: 
    \begin{enumerate}
        \item The $8$-dimensional lattice $(H^{ev}(A, \mathbb{Z}), \langle\cdot , \cdot \rangle)$, where $H^{ev}(A, \mathbb{Z}): = H^0(A, \mathbb{Z})\oplus H^2(A, \mathbb{Z})\oplus H^4(A, \mathbb{Z})$ is the ring of even cohomology, and for $v = (v_0, v_2, v_4)$ and $w = (w_0, w_2, w_4)$ in $H^{ev}(A, \mathbb{Z})$,
        \[
        \langle v, w\rangle: = \int_A v_2w_2-v_0w_4-v_4w_0
        \]
        is the \textit{Mukai pairing}.

        \item A weight-two Hodge structure on $H^{ev}(A, \mathbb{C})$ defined by
        \\
        \begin{itemize}
            \item[] $H^{0, 2}(H^{ev}(A, \mathbb{C})): = H^{2, 0}(A),$
            \item[] $H^{1, 1}(H^{ev}(A, \mathbb{C})): = H^{0,0}(A)\oplus H^{1,1}(A)\oplus H^{2,2}(A),$
            \item[]  $H^{2,0}(H^{ev}(A, \mathbb{C})): = H^{2, 0}(A)$.
             \end{itemize} \end{enumerate}
\end{Def}

\begin{Th}[{\cite[Theorem 0.2.]{YoshiokaAbelian}}]
    Let $A$ be an Abelian surface an let $v\in H^{ev}(A, \mathbb{Z})$ be a primitive Mukai vector that represents a sheaf. Assume $v^2\geq 6$. Then a fiber over $a\in A\times \widehat{A}$ of the Albanese morphism 
    \[
    alb_v: \mathcal{M}_H(v)\xrightarrow[]{det\times\widehat{det}} A\times \widehat{A}
    \]
    is a variety $Kum_a(v)$ of generalized Kummer deformation type.
\end{Th}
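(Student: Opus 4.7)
The plan is to follow the standard moduli-theoretic strategy for identifying the deformation type of a hyper-K\"{a}hler moduli space on an Abelian surface: establish smoothness and dimension of $\mathcal{M}_H(v)$ via Mukai's deformation theory, verify the statement directly for a distinguished Mukai vector, and then propagate the conclusion to every primitive $v$ with $v^2 \geq 6$ via Fourier--Mukai equivalences and deformations of the pair $(A, v)$.

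First I would invoke Mukai's theorem that, for a $v$-generic polarization $H$, every $H$-semistable sheaf with primitive Mukai vector $v$ is stable, so that $\mathcal{M}_H(v)$ is a smooth projective variety of dimension $\langle v, v \rangle + 2 = v^2 + 2$. The trace-free Atiyah class supplies a holomorphic symplectic form, and the determinant together with its Fourier--Mukai dual assemble into the Albanese map $\mathrm{alb}_v \colon \mathcal{M}_H(v) \to A \times \widehat{A}$, whose fibers have the expected dimension $v^2 - 2$. Translation by $A \times \widehat{A}$ on $\mathcal{M}_H(v)$ via $(a, L) \cdot E := t_a^* E \otimes L$ has image covering $A \times \widehat{A}$ with finite kernel, so the isomorphism type of the fiber $Kum_a(v)$ is independent of $a$; it suffices to analyze the fiber over a single base point.

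The base case is the rank-one Mukai vector $v = (1, 0, -n)$ with $n \geq 3$, so that $v^2 = 2n$. Here $\mathcal{M}_H(v) \cong \widehat{A} \times A^{[n]}$ and, up to the natural identifications, $\mathrm{alb}_v$ is the product of the identity on $\widehat{A}$ with the composition of the Hilbert--Chow morphism and summation $A^{[n]} \to A$. The fiber over the origin is $Kum_{n-1}(A)$ by definition, which has dimension $2n - 2 = v^2 - 2$, giving the theorem for this $v$.

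To reduce the general case to the base case, I would use that for a fixed value of $v^2 = 2n$, any primitive Mukai vector representing a sheaf can be connected to $(1, 0, -n)$ by a finite chain of Fourier--Mukai equivalences of $D^b(A_t)$, possibly combined with a one-parameter deformation of $A$ through Abelian surfaces. Each Fourier--Mukai equivalence identifies $\mathcal{M}_H(v)$ with, or makes it birational to, the moduli space of the transformed Mukai vector; Huybrechts' theorem that birational hyper-K\"{a}hler manifolds are deformation equivalent then transports the Kummer structure along each step of the chain, and running the argument relatively over the Albanese base yields the deformation equivalence of $Kum_a(v)$ with $Kum_{n-1}(A)$. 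The main obstacle will be controlling the Albanese morphism throughout: a Fourier--Mukai transform can interchange the roles of the determinant and its dual, so one must track how each cohomological component of $v$ corresponds to a factor of $A \times \widehat{A}$ at every stage, and one must build the argument in a relative setting over the deformation base so that the Albanese varies algebraically and its fibers remain smooth and proper. This compatibility bookkeeping constitutes the technical core of the argument.
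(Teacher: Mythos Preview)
The paper does not supply its own proof of this statement: it is quoted verbatim as a background result with the citation \cite[Theorem 0.2.]{YoshiokaAbelian}, and no argument is given. There is therefore nothing in the paper to compare your proposal against.

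That said, your outline is a reasonable sketch of the strategy Yoshioka actually uses: smoothness and dimension via Mukai's deformation theory, the rank-one base case $v=(1,0,-n)$ identifying the Albanese fiber with $Kum_{n-1}(A)$, and reduction of a general primitive $v$ to the base case by Fourier--Mukai transforms and deformation of the surface. Your caveat about tracking the Albanese map through each Fourier--Mukai step is exactly the technical point that requires care in Yoshioka's paper. One small correction: for $v=(1,0,-n)$ the moduli space $\mathcal{M}_H(v)$ parametrizes ideal sheaves twisted by degree-zero line bundles, so the identification with $\widehat{A}\times A^{[n]}$ and the description of $\mathrm{alb}_v$ should be stated with the correct ordering of the factors, but this does not affect the conclusion.
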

\subsection{The main results}

\begin{Th}
  Assume the conjecture of Buchweitz and Flenner as formulated in Conjecture \ref{Buchweitz Flenner conjecture}. Then Conjecture \ref{BX} holds for all $4d$-complex dimensional IHSMs of generalized Kummer deformation type. 
\end{Th}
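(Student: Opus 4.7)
The strategy is to combine deformation invariance of the LSC with Yoshioka's realization of $Kum_n(A)$ as a fiber of the Albanese of a Gieseker moduli space, to import the author's earlier work \cite{Foster} to obtain the Verbitsky portion of the cohomology, and finally to fill in the remaining sporadic Looijenga--Lunts--Verbitsky (LLV) summands using the Buchweitz--Flenner conjecture. Throughout, set $n = 2d$, so that $Kum_n(A)$ has complex dimension $4d$.

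First, since Conjecture \ref{BX} is deformation invariant within a smooth projective family --- algebraic cycles providing the inverse Hard Lefschetz isomorphism spread along such families --- it suffices to establish the LSC for one well-chosen variety $Kum_{2d}(A)$ with $A$ a projective Abelian surface carrying a sufficiently general polarization $H$. Using Yoshioka's theorem quoted above, I would select a primitive Mukai vector $v\in H^{ev}(A,\mathbb{Z})$ representing a sheaf with $v^2 = 4d+2 \geq 6$, so that $Kum_{2d}(A)$ appears as a fiber $Kum_a(v)$ of $alb_v : \mathcal{M}_H(v) \to A\times\widehat{A}$. Further work of Yoshioka provides a finite étale cover $A\times\widehat{A}\times Kum_{2d}(A)\to \mathcal{M}_H(v)$, so via the K\"unneth formula and the algebraicity of all K\"unneth projectors on the Abelian factors \cite{Lieberman}, the LSC for $\mathcal{M}_H(v)$ is equivalent to the LSC for $Kum_{2d}(A)$ together with known data on $A\times\widehat{A}$.

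Next, I would invoke \cite{Foster} to obtain the LSC for $\mathcal{M}_H(v)$. Transferring along the product decomposition above yields algebraic inverse Lefschetz correspondences on $Kum_{2d}(A)\times Kum_{2d}(A)$ acting correctly on the Verbitsky component of $H^*(Kum_{2d}(A),\mathbb{Q})$, that is, on the subspace generated from $H^2$ under the LLV algebra. This reproduces the partial result mentioned in the introduction (LSC for $Kum_{2d}(A)$ in the cohomological degrees accessible from $\mathcal{M}_H(v)$ alone).

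The main obstacle --- and the reason Buchweitz--Flenner is indispensable --- is that, unlike in the $K3^{[n]}$ case treated in \cite{CharlesMarkman}, the cohomology of a generalized Kummer variety is not the LLV-orbit of $H^2$: there are sporadic LLV summands on which no $H^2$-Lefschetz construction can produce an inverse. To invert Hard Lefschetz on these summands I would use the Chern character components of the universal sheaf on $\mathcal{M}_H(v)\times A$, restricted via the Albanese fiber inclusion to $Kum_{2d}(A)\times Kum_{2d}(A)$. Conjecture \ref{Buchweitz Flenner conjecture} is precisely what guarantees that the resulting Atiyah/Hochschild-type operators are represented by algebraic correspondences, supplying the missing negative generators of the LLV $\mathfrak{sl}_2$-triples on the sporadic summands. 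Assembling the Verbitsky-component operators from the previous step with the Buchweitz--Flenner-based operators on the sporadic summands produces the algebraic self-correspondence required by Conjecture \ref{BX}, and deformation invariance propagates this to every $4d$-dimensional IHSM of generalized Kummer deformation type. The parity restriction $n = 2d$ enters because the Chern-character classes of the universal sheaf only match the degree parity of the relevant sporadic summands in this even case.
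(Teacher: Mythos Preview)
Your proposal has genuine gaps and misidentifies the role of the Buchweitz--Flenner conjecture.

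First, the opening move is not available: the Lefschetz standard conjecture is \emph{not} known to be deformation invariant. An algebraic cycle on $X_0\times X_0$ inducing $\Lambda_f^{n-k}$ has no reason to spread to nearby fibres; the horizontal extension of its class remains a Hodge class, but asserting it stays algebraic is exactly the variational Hodge conjecture, which is a consequence of the LSC rather than an input to it. The paper does not reduce to a single $Kum_n(A)$ and then invoke deformation invariance; instead it constructs the required algebraic correspondence on \emph{every} fibre by deforming an actual sheaf.

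Second, your description of what Conjecture~\ref{Buchweitz Flenner conjecture} buys is off. It is not used to produce ``Atiyah/Hochschild-type operators on the sporadic LLV summands''. Its role is purely deformation-theoretic: the paper builds, via Magni's construction and the BKR equivalence, a locally free Fourier--Mukai kernel $\mathcal{F}^K$ on $Kum_n(A)\times Kum_n(\widehat{A})$, shows it is semiregular with $\kappa(\mathcal{F}^K)$ remaining of Hodge type, and then invokes Conjecture~\ref{Buchweitz Flenner conjecture} to extend $\mathcal{F}^K$ (as a possibly twisted sheaf) over the moduli space $\mathfrak{M}^0_\psi$ of Hodge-isometric marked pairs. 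The algebraic correspondence on each fibre is then $\kappa$ of the deformed sheaf.

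Third, the mechanism by which this correspondence yields the LSC is different from your Verbitsky-component-plus-sporadic-summands picture. The induced map $\phi_\kappa=[\kappa(\mathcal{F}^K)\sqrt{td}]_*$ is a \emph{degree-reversing} Hodge isometry of the full cohomology (Proposition~\ref{degree reversing hodge isometries}), and conjugation by $\phi_\kappa$ sends $L_f$ to a scalar multiple of $\Lambda_f$ inside the LLV algebra (Corollary~\ref{Derived equivalent LSC}). This handles all of $H^*$ at once; there is no separate treatment of sporadic summands.

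Finally, the parity constraint $n=2d$ is not about matching Chern-character degrees to sporadic summands. It comes from Magni's construction: the $\mathscr{S}_{n+1}$-equivariant symplectomorphism used to build $\mathcal{F}^K$ exists only when $n+1$ is odd (Theorem~\ref{Magni main theorems}).
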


The correspondences satisfying the assumptions of Conjecture \ref{BX} arise from Fourier-Mukai kernels of derived equivalences between Kummer-type IHSMs. Deformation equivalent IHSMs $X$ and $Y$ that are derived equivalent with a nontrivial Fourier-Mukai kernel of nonzero rank satisfy the LSC (Corollary \ref{Derived equivalent LSC}). This fact follows from the main result in \cite{Taelman}, which implies that the Hodge isometry associated to such a derived equivalence conjugates the LLV Lie algebra $\mathfrak{g}(X)$ to $\mathfrak{g}(Y)$. The main result of \cite{Magni} provides a derived equivalence between $4d$-dimensional Kummer-type IHSMs $Kum_n(A)$ and $Kum_n(\widehat{A})$, with the assumption that there exists a polarization $A\rightarrow \widehat{A}$ satisfying certain numerical conditions. This construction is built upon and generalized by Yang in \cite{YuxuanPaper}. The Fourier-Mukai kernels of these equivalences are images of an inflated semihomogeneous vector bundle under the Bridgeland-King-Reid (BKR) correspondence. Our main objective is then to deform these Fourier-Mukai equivalences over the moduli space of products of Hodge-isometric marked IHSMs.  

The beautiful work in \cite{OGModularK32}, \cite{OGKum}, and \cite{Markman2} provides a description of special sheaves on IHSMs known as \textit{modular sheaves}. A modular sheaf $(\mathcal{F}, H)$ is a slope-stable reflexive sheaf $\mathcal{F}$ with respect to an ample class $H$ such that there exists an open subcone of the ample cone containing $H$ such that $\mathcal{F}$ remains slope-stable with respect to every class $H^{\prime}$ in this subcone. This is equivalent to the orthogonal projection of the characteristic class $\kappa_2(\mathcal{F})$ introduced by Markman in \cite{Markman20} (see Definition \ref{kappa class definition}) being a rational multiple of the class of the BBF form in the Verbitsky component of $H^4(X, \mathbb{Q})$ (cf. \cite[Definition 1.1]{OGModularK32}). Such sheaves are related to hyperholomorphic sheaves studied by Verbitsky in \cite{VerbitskyHH}; Markman in \cite{Markman2} provides conditions for the existence of modular sheaves on IHSMs and further a proof of the fact that that their sheaf of Azumaya algebras $\mathcal{E}nd(F)$ deforms with every K\"{a}hler deformation. 

Our situation is slightly different, as we wish to deform a sheaf over a \textit{product} of IHSMs using the calculus of twistor lines described in \cite{VerbitskyHH}. It is not a priori clear what it means to be projectively hyperholomorphic over a product. Slope-stability also presents an issue, as we cannot expect the sheaf to remain slope stable with respect to arbitrary K\"{a}hler deformations of the factors. To this end, in section \ref{deforming derived equivalences section} we deform our sheaf over a ``moduli space of rational Hodge isometries," i.e. a moduli space of products of IHSMs whose cohomology rings are rational Hodge-isometric, following Markman in \cite{Markman22}. We demonstrate that we have control over the characteristic class $\kappa$ along small deformations of the product of IHSMs. 

\subsection{Outline of the paper}
In section \ref{semiregularity section} we introduce semiregularity, the Buchweitz-Flenner theorems, and their generalizations to the setting of twisted sheaves. In section \ref{semihomogeneous section}, we introduce the construction of the vector bundles on products of Kummer-type IHSMs that are of interest to us. In section \ref{section 4 semiregularity bundles}, we study the equivariance properties of these bundles and their semiregularity. In section \ref{section 5 characteristic classes}, we describe the relationship between the LSC and derived equivalences. In section \ref{section 6 LSC and derived equivalences} we make concrete the relationship between nontrivial derived equivalences and the Lefschetz standard conjecture, and in section \ref{deforming derived equivalences section} we deform or bundles over the moduli space of marked Hodge-isometric generalized Kummer varieties. 
\section*{Acknowledgments}
I sincerely thank Eyal Markman for suggesting this problem and for numerous discussions. I graciously acknowledge conversations with Nicolas Addington, Yuxuan Yang, and Ben Tighe, and a correspondence with Jonathan Pridham.
\begin{Rem}
    Throughout, we work over the ground field $\mathbb{C}$.
\end{Rem}
\section{Semiregularity}\label{semiregularity section}
We recall the property of semiregularity from \cite{Buchweitz-Flenner}. For $X$ a projective variety, it follows from \cite[Theorem 5.1]{Buchweitz-Flenner} and a semiregularity sheaf $\mathcal{F}\rightarrow X$, the pair $(Y, E)$ deforms in a sublocus of the Hodge locus of $ch(E)$.
Throughout this section, let $\mathcal{I}_{\Delta}\subset \mathcal{O}_{X\times X}$ denote the ideal sheaf of the diagonal of $X\times X$. We let $\mathcal{F}\rightarrow X$ be a coherent sheaf, although in the sequel we will often assume that $\mathcal{F}$ is locally free. There is a short exact sequence of coherent sheaves
\begin{equation}\label{differential one forms short exact sequence}
0\rightarrow \Omega_X^1\rightarrow \mathcal{O}_{X\times X}/\mathcal{I}^2_{\Delta}\rightarrow \mathcal{O}_X\rightarrow 0,\end{equation} where we note that $\mathcal{O}_{X\times X}/\mathcal{I}^2_{\Delta}$ denotes the structure sheaf of the first order infinitesimal neighborhood of $\Delta$.
Let $p_i: X\times X\rightarrow X$ denote the projection onto the $i$th factor. Twisting (\ref{differential one forms short exact sequence}) by $p_1^*\mathcal{F}$  yields an exact sequence
$$
0\rightarrow \Omega_X^1 \otimes p_1^*\mathcal{F}\rightarrow \mathcal{O}_{X\times X}/\mathcal{I}^2_{\Delta}\otimes_{\mathcal{O}_{X\times X}}p_1^*\mathcal{F}\rightarrow p_1^*\mathcal{F}\rightarrow 0,
$$
and thus defines an extension class \begin{equation}
At(\mathcal{F})\in \mathrm{Ext}^1_X(\mathcal{F}, \mathcal{F}\otimes \Omega_X^1)\cong H^1(X, \mathcal{E}nd(\mathcal{F})\otimes \Omega^1_X),
\end{equation} known as the \textit{Atiyah class}. We see that the Atiyah class obstructs the existence of global holomorphic connections $\mathcal{F}\rightarrow \mathcal{F}\otimes \Omega^1_X$ on $\mathcal{F}$. Taking the $i$th cup product of the Atiyah class with itself, one defines the $i$th power of the Atiyah class
\begin{equation} At^i(\mathcal{F})\in \mathrm{Ext}^i_X(\mathcal{F}, \mathcal{F}\otimes \Omega^i_X).
\end{equation}

\noindent Note that $At^0(\mathcal{F}) = Id$. We denote the exponential Atiyah class by
\begin{equation}
    \mathrm{exp}\,  At(\mathcal{F}) = Id+ At_{\mathcal{F}}+ \frac{At_{\mathcal{F}}^2}{2}+\cdots,
\end{equation}
which for each $q$, defines a map $\bigoplus_{q\geq 0} \mathcal{F}_q \longrightarrow \bigoplus_{q\geq 0} (\mathcal{F}\otimes \Omega_X^q[q])$.
Define the map $\sigma_q$ as the composition
\begin{equation}\label{qth semiregularity map}
\sigma_q: \mathrm{Ext}^2_X(\mathcal{F}, \mathcal{F})\xrightarrow{\cdot(-At(\mathcal{F})^q/q!)} \mathrm{Ext}_X^{q+2}(\mathcal{F}, \mathcal{F}\otimes \Omega_X^i)\xrightarrow{H^{q+2}(Tr)} H^{q+2}(X, \Omega_X^q),
\end{equation}
and the \textit{semiregularity map}
\begin{equation}\label{semiregularity map}
    \sigma: = (\sigma_0, \dots, \sigma_{n-2}): \mathrm{Ext}^2(\mathcal{F}, \mathcal{F})\longrightarrow \prod_{q=0}^{n-2}H^{q+2}(X, \Omega^q_X),
\end{equation}
where $n$ is the dimension on $X$. Here, $Tr$ dentoes the trace map 
\begin{equation}
    Tr(=Tr^0): \mathcal{H}om_{O_X}(\mathcal{F}, \mathcal{F})\longrightarrow \mathcal{O}_X,
\end{equation}
which in turn induces maps on hypercohomology and therefore determines trace maps 
\begin{equation}
    Tr^q: \mathrm{Ext}^q(\mathcal{F}, \mathcal{F})\longrightarrow H^q(X, \mathcal{O}_X);
\end{equation}
see \cite[Proposition 1]{Artamkin}. Assuming that $\mathrm{rank}(\mathcal{F})>0$, we note that the existence of the trace map provides a section for the exact sequence 
\[0\rightarrow \mathcal{O}_X\rightarrow \mathcal{E}nd(\mathcal{F})\rightarrow \mathcal{E}nd(\mathcal{F})/\mathcal{O}_X\rightarrow 0\] and therefore $\mathcal{E}nd(\mathcal{F})$ splits as a direct sum $\mathcal{O}_X\oplus \mathcal{E}nd(\mathcal{F})/\mathcal{O}_X$. As a result, we have a decomposition on the cohomology \begin{equation}H^q(\mathcal{E}nd(\mathcal{F}))\cong H^q(\mathcal{O}_X)\oplus H^q(\mathcal{E}nd(\mathcal{F})/\mathcal{O}_X)).\end{equation}
\begin{Def}
    A sheaf $\mathcal{F}$ is said to be \textit{semiregular} if the semiregularity map (\ref{semiregularity map}) is injective.
\end{Def}

\subsection{Buchweitz-Flenner theorems} Semiregularity is a sufficient condition for a sheaf to deform sideways with its Chern character in a sublocus of the Hodge locus. 
Let $\mathcal{T}_X$ be the tangent sheaf of $X$. By \cite[Corollary 4.3]{Buchweitz-Flenner} There exists a commutative diagram:
\begin{equation}\label{regular semiregular diagram}
\begin{tikzcd}
	{H^1(X, \mathcal{T}_X)} & {} & {\mathrm{Ext}^2_X(E, E)} \\
	\\
	& {H^{q+2}(X, \Omega^q_X)}
	\arrow["{\langle *,\, -At(E)\rangle}", from=1-1, to=1-3]
	\arrow["{\langle*,\, ch_{q+1}(E)\rangle}"', from=1-1, to=3-2]
	\arrow["{\sigma_q}", from=1-3, to=3-2]
\end{tikzcd}
\end{equation}
\begin{Rem}\label{Griffiths transversality}
    Note that by Griffiths transversality (see \cite[Theorem 5.6]{VoisinHT} and the discussion in \cite[Section 5.4]{Buchweitz-Flenner} for the precise interpretation) the horizontal class extending $ch_{q+1}(E)$ remains of Hodge-type if and only if for all $\alpha\in H^1(X, \mathcal{T}_X)$, we have $\langle \alpha, ch_{q+1}(E)\rangle = 0$. 
\end{Rem}
Taking the above commutative diagram and Remark \ref{Griffiths transversality} into account, Buchweitz and Flenner prove the following theorem:

\begin{Th}[{\cite{Buchweitz-Flenner}}]\label{Buchweitz-Flenner theorem} For $B$ an analytic disk, let $\mathcal{Y}\xrightarrow{\pi} B$ be a smooth and proper family such that a coherent sheaf $\mathcal{F}$ over $Y: = \pi^{-1}(0)$ is semiregular. Assume that $ch(\mathcal{F})$ remains of Hodge-type over $B$. Then there exists a neighborhood $U\subset B$ containing $0$ such that $\mathcal{F}$ extends over $\pi^{-1}(U)$ to a coherent sheaf $\mathcal{E}$ which is flat over $U$.
    
\end{Th}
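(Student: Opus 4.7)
The plan is to extend $\mathcal{F}$ order by order along $B$, using the semiregularity map to annihilate the obstruction at each stage, and then to upgrade the resulting formal deformation to an analytic one. Throughout, the deformation of $Y$ inside $\pi$ is regarded as fixed, and one seeks only to propagate $\mathcal{F}$ along it.

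First I would treat the first-order case. Let $v \in T_0 B$ be a tangent vector and let $\kappa(v) \in H^1(Y, \mathcal{T}_Y)$ denote its image under the Kodaira--Spencer map of $\pi$. A first-order extension of $\mathcal{F}$ in the direction $v$ exists if and only if the obstruction class
\[
\mathrm{ob}_1(v) := \kappa(v) \cup \mathrm{At}(\mathcal{F}) \in \mathrm{Ext}^2_Y(\mathcal{F}, \mathcal{F})
\]
vanishes. By the commutative diagram (\ref{regular semiregular diagram}) one has $\sigma_q(\mathrm{ob}_1(v)) = \langle \kappa(v),\, ch_{q+1}(\mathcal{F})\rangle$, and by Remark \ref{Griffiths transversality} this pairing vanishes for every $v$ precisely because $ch_{q+1}(\mathcal{F})$ remains of Hodge type on $B$, which is hypothesized. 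Hence $\sigma(\mathrm{ob}_1(v)) = 0$, and by semiregularity (injectivity of $\sigma$) we conclude $\mathrm{ob}_1(v) = 0$. So $\mathcal{F}$ extends to first order along every tangent direction of $B$.

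Next I would iterate. Given an extension $\mathcal{E}_n$ over the $n$th-order thickening $Y_n \hookrightarrow \mathcal{Y}$, the obstruction to extending $\mathcal{E}_n$ to $\mathcal{E}_{n+1}$ over $Y_{n+1}$ again lives in $\mathrm{Ext}^2_Y(\mathcal{F}, \mathcal{F})$. The crucial input from \cite{Buchweitz-Flenner} is that the semiregularity map $\sigma$ is compatible with the full obstruction calculus: it sends the higher-order sheaf-theoretic obstruction to the obstruction to extending the Hodge class $ch_{q+1}(\mathcal{F})$ to order $n+1$. Since $ch(\mathcal{F})$ remains of Hodge type over all of $B$ by assumption, these Hodge-theoretic obstructions vanish at every order, and semiregularity propagates the vanishing back to the deformation-theoretic side. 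This produces a compatible system $\{\mathcal{E}_n\}$ and hence a formal sheaf $\widehat{\mathcal{E}}$ on the formal completion $\widehat{\mathcal{Y}}_0$ of $\mathcal{Y}$ along $Y$. Grothendieck's existence theorem together with standard analytic effectification for the smooth proper family $\pi$ then algebraizes and analytifies $\widehat{\mathcal{E}}$ to a coherent sheaf $\mathcal{E}$ on $\pi^{-1}(U)$ for some open analytic neighborhood $U \subset B$ of $0$, flat over $U$ and restricting to $\mathcal{F}$ on $Y$.

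The main obstacle is the higher-order step: the elegant square (\ref{regular semiregular diagram}) only controls the first-order obstruction, and showing that $\sigma$ kills the obstruction at \emph{every} order requires upgrading this square to a compatibility between the full $L_\infty$ (or curved DG-Lie) structures governing sheaf deformations of $(Y, \mathcal{F})$ and Hodge deformations of $ch(\mathcal{F})$, with $\sigma$ realized as the linear term of an $L_\infty$-morphism between them. Establishing this upgrade is the technical heart of \cite{Buchweitz-Flenner} and would be the most delicate step in writing out a complete proof.
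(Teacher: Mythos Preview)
The paper does not supply its own proof of this theorem: it is quoted as a result of Buchweitz--Flenner, with the sentence ``Taking the above commutative diagram and Remark \ref{Griffiths transversality} into account, Buchweitz and Flenner prove the following theorem'' and the citation \cite{Buchweitz-Flenner} standing in for the argument. So there is no in-paper proof to compare against; what can be assessed is whether your sketch is a faithful outline of the Buchweitz--Flenner argument.

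On that score your outline is essentially right in shape: one kills the obstruction class in $\mathrm{Ext}^2_Y(\mathcal{F},\mathcal{F})$ at each infinitesimal order by pushing it through $\sigma$ into the Hodge-theoretic obstruction space, uses the hypothesis that $ch(\mathcal{F})$ stays Hodge to see the image is zero, and invokes injectivity of $\sigma$. You also correctly flag that the first-order square (\ref{regular semiregular diagram}) does not by itself handle higher orders and that this compatibility is the substantive content of \cite{Buchweitz-Flenner}. Two refinements are worth noting. First, framing the higher-order compatibility as an $L_\infty$-morphism is an anachronism: that viewpoint is due to later work (Iacono--Manetti, Pridham), whereas Buchweitz and Flenner argue more directly via their cotangent-complex formalism and the relative Atiyah class to show that the obstruction at every order maps under $\sigma$ to the obstruction to extending the Hodge class. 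Second, the passage from the formal extension to an honest analytic sheaf over a neighborhood $U\subset B$ is not Grothendieck existence (which concerns formal schemes over complete Noetherian rings); in the complex-analytic setting one instead appeals to Artin approximation / Douady-type representability for the relative deformation problem of $\mathcal{F}$ over the base $B$, which is how Buchweitz--Flenner conclude. Neither point is a gap in your strategy, but both should be stated accurately if you write the argument out.
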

\subsection{Twisted sheaves and semiregularity}

 For a locally free sheaf $\mathcal{F}$, the projective bundle $\mathbb{P}(\mathcal{F})$ also deforms over a family over a locus in the base manifold, provided that we relax the condition that $c_1(\mathcal{F})$ remains of Hodge-type $(1, 1)$. Let $\mathcal{U}: = \{U_i\}_{i\in I}$ be an open cover of $X$. We refer to a coherent sheaf $\mathcal{B}$ of nonzero rank $r$ on $X$ which is twisted by the $2$-cocyle $\theta\in \mathcal{Z}(X, \mu_r)$, where $\mu_r$ is the local system of $r$th roots of unity, as a $\theta$\textit{-twisted sheaf} (we refer to \cite{Caldararu2} and \cite[Section 2.2]{Markman2} for a detailed discussion). Every such projective bundle lifts to a locally free twisted sheaf. We  wish to extend the formulation of Theorem \ref{Buchweitz-Flenner theorem} to account for twisted sheaves. As described in \cite{TodaFM}, first order deformations of $D^b(X)$ are parametrized by the second Hochschild cohomology $HH^2(X): = \mathrm{Hom}_{D^b(X\times X)}(\mathcal{O}_{\Delta_X}, \mathcal{O}_{\Delta_X}[2])$. As $HH^2(X)$ parametrizes natural transformations from the identity endofunctor of $D^b(X)$ to the shift of the identity endofunctor by $i$, evaluation at an element $\mathcal{F}\in D^b(X)$ yields a graded homomorphism 
\begin{equation}\label{obstruction map}
    ob_{\mathcal{F}}: HH^*(X)\longrightarrow \mathrm{Ext}^*(\mathcal{F}, \mathcal{F}),
\end{equation}
the kernel of which parametrizes directions in which $\mathcal{F}$ deforms. For 
\begin{equation}
    HT^k(X): = \bigoplus_{i+j = k} H^i(X, \bigwedge^j\mathcal{T}_X),
\end{equation}
we have the \textit{Hochschild-Kostant-Rosenberg (HKR)} isomorphism 
\begin{equation}
    HKR: HH^*(X)\xrightarrow{\sim}HT^*(X),
\end{equation}
(see \cite[Theorem 2.5]{TodaFM}). Note that an element $\xi\in HT^2(X)$ decomposes into $\xi = \alpha+\beta+\gamma$, where $\beta\in H^1(X, \mathcal{T}_X)$ corresponds to a deformation of the complex structure on $X$, $\alpha\in H^2(X, \mathcal{O}_X)$ corresponds to a gerby deformation of $X$, and $\gamma\in H^0(X, \bigwedge^2\mathcal{T}_X)$ corresponds to a noncommutative deformation of $X$. For an element $\xi\in HT^2(X)$, there exists the derived category $D^b(X, \xi)$ of the  $\mathbb{C}[\epsilon]/(\epsilon^2)$-linear Abelian category $\mathrm{Coh}(X, \xi)$ (\cite[Definition 4.4]{TodaFM}) and the natural embedding $i_*: D^b(X)\hookrightarrow D^b(X, \xi)$. Toda proves that a necessary and sufficient condition for an element $\mathcal{F}$ in $D^b(X)$ to deform in the direction $\xi$ is that $\xi$ is in the kernel of the map given by contraction with the exponential Atiyah class: 
\begin{Prop}[{\cite[Proposition 6.1]{TodaFM}}]\label{TodaDeformations}
For $E\in D^b(X)$ and $\xi\in HT^2(X)$, assume that $\langle \xi, \mathrm{exp}\,At(E)\rangle = 0$ in $\mathrm{Ext}^2_X(E, E)$. Then there exists an object $E^{\xi}\in D^b(X, \xi)$ such that $Li^*E^{\xi} \cong E$. 
\end{Prop}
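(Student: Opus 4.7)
My plan is to translate the question into Hochschild cohomology via HKR, reduce the existence of a lift to the vanishing of an explicit obstruction class $ob_E(\tilde{\xi}) \in \mathrm{Ext}^2(E, E)$, and then identify this obstruction with the contraction $\langle \xi, \mathrm{exp}\,At(E)\rangle$.

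First, I would apply the HKR isomorphism to transport $\xi \in HT^2(X)$ to a class $\tilde{\xi} \in HH^2(X) = \mathrm{Ext}^2_{X \times X}(\mathcal{O}_{\Delta_X}, \mathcal{O}_{\Delta_X})$. Such a class corresponds to a first-order deformation of the identity Fourier-Mukai kernel, which fits into a distinguished triangle
\[\mathcal{O}_{\Delta_X} \to \mathcal{O}_{\Delta_X}^{\tilde{\xi}} \to \mathcal{O}_{\Delta_X} \xrightarrow{\tilde{\xi}} \mathcal{O}_{\Delta_X}[2]\]
on $X \times X$. Convolving this triangle with $E$ (equivalently, evaluating the natural transformation $\tilde{\xi} \colon \mathrm{id}_{D^b(X)} \to \mathrm{id}_{D^b(X)}[2]$ at $E$) yields the obstruction class
\[ob_E(\tilde{\xi}) \in \mathrm{Ext}^2_X(E, E).\]
Standard deformation theory for the $\mathbb{C}[\epsilon]/(\epsilon^2)$-linear abelian category $\mathrm{Coh}(X, \xi)$ implies that a lift $E^{\xi} \in D^b(X, \xi)$ with $Li^* E^{\xi} \cong E$ exists if and only if $ob_E(\tilde{\xi}) = 0$.

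Second, I would establish the compatibility
\[ob_E(\tilde{\xi}) = \langle \xi, \mathrm{exp}\,At(E)\rangle \in \mathrm{Ext}^2_X(E, E).\]
Decomposing $\xi = \alpha + \beta + \gamma$ according to the splitting $HT^2(X) = H^2(\mathcal{O}_X) \oplus H^1(\mathcal{T}_X) \oplus H^0(\bigwedge^2 \mathcal{T}_X)$, this reduces to three separate identifications: on the $H^2(\mathcal{O}_X)$-summand $ob_E$ acts as $\alpha \cdot \mathrm{id}_E$ (the gerby contribution); on the $H^1(\mathcal{T}_X)$-summand it acts as the classical Illusie-Kodaira-Spencer obstruction $\langle \beta, At(E)\rangle$; and on the $H^0(\bigwedge^2 \mathcal{T}_X)$-summand it acts as $\langle \gamma, At^2(E)/2\rangle$ (the noncommutative contribution). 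Summing gives $\langle \xi, \mathrm{exp}\, At(E)\rangle$.

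The main obstacle is this compatibility between the Hochschild obstruction map and Atiyah-class contraction. Establishing it cleanly requires the compatibility of HKR with the module structure of $\mathrm{Ext}^*(E, E)$ over $HH^*(X)$, due in various forms to C\u{a}ld\u{a}raru-Tu and Calaque-Van den Bergh; alternatively one can proceed by a direct \v{C}ech-theoretic argument, lifting $E$ locally on an open cover and identifying each resulting \v{C}ech obstruction cocycle term-by-term with the respective contributions from $\alpha$, $\beta$, and $\gamma$. Granted the compatibility, the hypothesis $\langle \xi, \mathrm{exp}\, At(E)\rangle = 0$ forces $ob_E(\tilde{\xi}) = 0$, and the object $E^{\xi}$ exists as claimed.
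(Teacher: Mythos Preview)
The paper does not give its own proof of this proposition; it is quoted directly from Toda \cite[Proposition~6.1]{TodaFM} as a black-box input, with no accompanying argument. So there is no in-paper proof against which to compare your proposal.

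That said, your outline is a reasonable reconstruction of how one would assemble such a proof from the surrounding literature. The compatibility you flag as the ``main obstacle'' --- that the Hochschild obstruction map $ob_E$ composed with HKR agrees with contraction against $\mathrm{exp}\,At(E)$ --- is exactly the content of the next result the paper quotes, Theorem~\ref{HuangTheorems} due to Huang. In effect your proposal derives Toda's statement from Huang's commutative diagram together with the standard fact that $ob_E(\tilde{\xi})=0$ characterizes liftability to $D^b(X,\xi)$; the paper records precisely this logical relationship in the sentence following the proposition (``In fact, the kernel of the contraction map \ldots\ is contained in the kernel of the obstruction map by the following theorem''). One historical caveat: Toda's original argument predates Huang's compatibility theorem and proceeds more directly, so your route, while valid, is not the one in \cite{TodaFM} itself.
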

In fact, the kernel of the contraction map $\langle -, \, \mathrm{exp}At(E)\rangle$ is contained in the kernel of the obstruction map (\ref{obstruction map}) by the following theorem:
\begin{Th}[{\cite[Theorem A, Theorem B]{Huang}}]\label{HuangTheorems}
    There is a commutative diagram 
    \[\begin{tikzcd}
	{HH^*(X)} && {\mathrm{Ext}^*(\mathcal{F}, \mathcal{F})} \\
	{HT^*(X)}
	\arrow["ob_{\mathcal{F}}", from=1-1, to=1-3]
	\arrow["HKR", from=2-1, to=1-1]
	\arrow["{\langle-, \,\mathrm{exp}(At_{\mathcal{F}})\rangle}"', from=2-1, to=1-3]
\end{tikzcd}\]
Furthermore, for $\tilde{\alpha}\in HT^*(X)$ a polyvector, if $\langle \tilde{\alpha}, \, \mathrm{exp}(At_{\mathcal{F}})\rangle = 0$, then $\langle D(\tilde{\alpha}), \, v(\mathcal{F})\rangle = 0$, where $D$ is the Duflo operator acting as an automorphism on $HT^*(X)$ via contraction by the Todd class $Td_X^{1/2}$.
\end{Th}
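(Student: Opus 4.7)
The plan is to establish the two parts of the theorem separately, building on a common toolkit from the universal properties of the Atiyah class and the Hochschild--Kostant--Rosenberg decomposition. The key structural ingredient is C\u{a}ld\u{a}raru's description of Hochschild cohomology as Ext in the derived category of the square \(X\times X\), namely \(HH^*(X) \cong \mathrm{Ext}^*_{X\times X}(\mathcal{O}_{\Delta}, \mathcal{O}_{\Delta})\), together with Markarian's (and later C\u{a}ld\u{a}raru--Willerton's) calculation that under HKR the diagonal restriction of a Hochschild class is represented by contraction with powers of the Atiyah class of \(\mathcal{O}_\Delta\), which itself decomposes along the diagonal into wedge powers of the Atiyah class of \(X\).

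For the commutativity of the triangle, I would first unpack the obstruction map \(ob_{\mathcal{F}}\) as the Fourier--Mukai push-forward of a Hochschild class along the evaluation functor \(\Phi_{\mathcal{O}_\Delta}(\mathcal{F}) = \mathcal{F}\): an element \(\eta\in \mathrm{Ext}^k_{X\times X}(\mathcal{O}_\Delta, \mathcal{O}_\Delta)\) induces an endomorphism of \(\mathcal{F}\) of degree \(k\), and by construction this is \(ob_{\mathcal{F}}(\eta)\). On the other hand, under HKR a pure polyvector \(\tilde\alpha\in H^i(X, \bigwedge^j \mathcal{T}_X)\) is represented by a Hochschild class whose action on \(\mathcal{F}\), after a local \v Cech calculation using polyvector representatives and the universal transition description of the Atiyah class, is precisely the \(j\)-fold contraction \(\langle \tilde\alpha, At(\mathcal{F})^j/j!\rangle\). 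Summing over bidegrees and passing to the exponential then yields the desired identity \(ob_{\mathcal{F}}\circ HKR = \langle -,\, \mathrm{exp}(At_{\mathcal{F}})\rangle\).

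For the consequence about the Duflo operator, I would apply the trace to the equation \(\langle \tilde\alpha,\, \mathrm{exp}(At_{\mathcal{F}})\rangle = 0\) in \(\mathrm{Ext}^*(\mathcal{F},\mathcal{F})\) valued in \(\Omega_X^*\). By the Atiyah--Illusie formula, the trace of \(\mathrm{exp}(At_{\mathcal{F}})\) is the Chern character \(\mathrm{ch}(\mathcal{F})\) viewed in \(\bigoplus_q H^q(X, \Omega_X^q)\). The Duflo twist \(D\), given by contraction with \(Td_X^{1/2}\), is exactly the correction needed to make HKR compatible with the natural pairings in the sense of the C\u{a}ld\u{a}raru conjecture (proved by Ramadoss and Calaque--Rossi--Van den Bergh), so that contraction of \(\tilde\alpha\) with \(\mathrm{ch}(\mathcal{F})\) transforms into the pairing of \(D(\tilde\alpha)\) with the Mukai vector \(v(\mathcal{F}) = \mathrm{ch}(\mathcal{F})\cdot Td_X^{1/2}\), giving the claimed vanishing.

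The hardest step will be the precise identification in the first part: showing that the obstruction map on a polyvector of bidegree \((i,j)\) is \emph{exactly} contraction with \(At(\mathcal{F})^j/j!\) with no further Todd correction inserted at this stage. This sign-and-Todd bookkeeping is the notorious source of the Duflo discrepancy between C\u{a}ld\u{a}raru's original formulation and its eventual proofs, and one must fix HKR conventions carefully so that the entire \(Td_X^{1/2}\) correction is deferred to the trace computation in Part 2 rather than leaking into the commutative diagram of Part 1.
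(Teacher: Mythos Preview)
The paper does not contain a proof of this statement: Theorem~\ref{HuangTheorems} is quoted verbatim from \cite[Theorem A, Theorem B]{Huang} and is used as a black box, with no argument supplied in the paper itself. There is therefore nothing in the paper against which to compare your proposal.

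That said, your outline is a reasonable sketch of how results of this type are established in the literature. The identification of $ob_{\mathcal{F}}$ with contraction against $\exp(At_{\mathcal{F}})$ via the HKR isomorphism and Markarian's description of the Atiyah class of $\mathcal{O}_{\Delta}$ is indeed the core of Huang's Theorem~A, and the passage to the Mukai vector via the $Td_X^{1/2}$ correction (the C\u{a}ld\u{a}raru conjecture, proved by Ramadoss and Calaque--Rossi--Van den Bergh) is exactly the mechanism behind Theorem~B. Your caveat about the bookkeeping---ensuring the Todd correction appears only at the trace step and not already in the commutative triangle---is well placed, since different HKR normalizations in the literature shift where $Td_X^{1/2}$ enters. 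But none of this is carried out in the present paper; if you want to compare against an actual proof you should consult \cite{Huang} directly.
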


\begin{Rem}
We note that in the case of Abelian varieties, $Td_X$ is trivial and therefore the kernel of the obstruction map (\ref{obstruction map}) is contained in the kernel of contraction by the Chern character of $\mathcal{F}$.
\end{Rem}

Let us denote the obstruction map given by contraction with the exponential Atiyah class as  
\begin{equation}\label{HTObstruction map}
    \mathrm{ob}^{HT}_{F}: HT^*(X)\longrightarrow \mathrm{Ext}^*(\mathcal{F}, \mathcal{F}).
\end{equation}

\begin{Def}\label{kappa class definition}
    For $\tilde{E}$ a $\theta$-twisted coherent sheaf of rank $r>0$, define the characteristic class
\begin{equation}\label{twisted kappa}
\kappa(\tilde{E}): = Sqrt_r(ch(\tilde{E}^{\otimes r})\otimes \mathrm{det}(\tilde{E}^{-1})),
\end{equation}
where $Sqrt_r$ denotes the $r$th branch of the square root function. Note that if $E$ is an untwisted sheaf, (\ref{twisted kappa}) reduces to
\begin{equation}
    \kappa(E) = ch(E)\left(\frac{-c_1(E)}{r}\right).
\end{equation}
\end{Def}
\begin{Rem}
    For a deformation, the Chern character $ch(E)$ remains of Hodge-type if and only if both $\kappa(E)$ and $c_1(E)$ do, therefore the Hodge locus of $\kappa(E)$ contains the Hodge locus of $ch(E)$.  
\end{Rem}
\begin{Lem}\label{replacing kappa with chern}
    Let $\mathcal{G}\rightarrow X$ be a rank $r$ (possibly $\theta$-twisted) sheaf. There exists a twisted sheaf $\mathcal{G}^{\prime}$ with trivial determinant such that $\kappa(\mathcal{G})= \kappa(\mathcal{G}^{\prime}) = ch(\mathcal{G}^{\prime})$. As a result, if $\kappa(\mathcal{G})$ remains of Hodge-type on a particular locus $U\subset B$ if and only if $ch(\mathcal{G}^{\prime})$ remains of Hodge-type on that locus. 
\end{Lem}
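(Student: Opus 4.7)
The plan is to take $\mathcal{G}':= \mathcal{G}\otimes \mathcal{L}$, where $\mathcal{L}$ is a (possibly twisted) line bundle realizing an $r$-th root of $\det(\mathcal{G})^{-1}$. Here the point is the familiar construction: one chooses a trivializing \v{C}ech cover $\{U_i\}$ of $X$ for $\det(\mathcal{G})^{-1}$ with transition functions $g_{ij}$, picks local $r$-th roots $h_{ij}$ of $g_{ij}$, and observes that $h_{ij}h_{jk}h_{ki}$ is an honest $\mu_r$-valued $2$-cocycle $\eta_{ijk}$. The collection $\{h_{ij}\}$ then defines the $\eta$-twisted line bundle $\mathcal{L} := \det(\mathcal{G})^{-1/r}$. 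Observe that $\det(\mathcal{G})$ itself is an untwisted line bundle because the twist $\theta$ has order dividing $r$ and $\theta^r = 1$, so this construction makes sense.

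Next, I would verify the basic numerical properties of $\mathcal{G}'$. By construction $\mathcal{L}^{\otimes r}\cong \det(\mathcal{G})^{-1}$, hence
\[
\det(\mathcal{G}') \;=\; \det(\mathcal{G})\otimes \mathcal{L}^{\otimes r} \;=\; \det(\mathcal{G})\otimes \det(\mathcal{G})^{-1} \;=\; \mathcal{O}_X,
\]
while the combined Brauer class of $\mathcal{G}'$ is $\theta\cdot\eta$, whose $r$-th power is trivial. In particular, $c_1(\mathcal{G}') = 0$, so from the reduction of Definition \ref{kappa class definition} we get $\kappa(\mathcal{G}') = \mathrm{ch}(\mathcal{G}')$.

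Then I would compute the Chern character using multiplicativity under tensor product together with the fact that $\mathrm{ch}(\mathcal{L}) = \exp(-c_1(\mathcal{G})/r)$:
\[
\mathrm{ch}(\mathcal{G}') \;=\; \mathrm{ch}(\mathcal{G})\cdot \exp\!\Bigl(-\tfrac{c_1(\mathcal{G})}{r}\Bigr) \;=\; \kappa(\mathcal{G}),
\]
where the final equality is exactly the reduction of the twisted $\kappa$-formula to the untwisted case. Combining this with the previous paragraph yields the desired chain $\kappa(\mathcal{G}) = \kappa(\mathcal{G}') = \mathrm{ch}(\mathcal{G}')$.

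For the final assertion about Hodge-type, it is now immediate: since $\kappa(\mathcal{G})$ and $\mathrm{ch}(\mathcal{G}')$ agree as rational cohomology classes on $X$, they have exactly the same Hodge locus when the pair $(X,\mathcal{G})$ is placed in any smooth family, so $\kappa(\mathcal{G})$ remains of Hodge-type along $U\subset B$ if and only if $\mathrm{ch}(\mathcal{G}')$ does. The main obstacle is really only careful bookkeeping with the Brauer classes to verify that the twisted line bundle $\mathcal{L}$ exists and that tensor product behaves as expected on the twisted Chern character; these facts are all standard (see \cite{Caldararu2}), so the argument is fundamentally a formal manipulation once $\mathcal{L}$ is in hand.
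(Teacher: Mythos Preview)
Your proposal is correct and is essentially the paper's own argument: the paper constructs $\mathcal{G}'$ by multiplying the transition functions of $\mathcal{G}$ by $r$-th roots of the inverse determinant cocycle, which is precisely your $\mathcal{G}\otimes\mathcal{L}$ with $\mathcal{L}=\det(\mathcal{G})^{-1/r}$, and then observes that trivial determinant forces $\kappa(\mathcal{G}')=ch(\mathcal{G}')$. Your write-up is in fact slightly more explicit than the paper's in verifying the equality $\kappa(\mathcal{G})=ch(\mathcal{G}')$ via the multiplicativity of the Chern character.
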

\begin{proof}
    Given such a sheaf $\mathcal{G} $, the existence of $\mathcal{G}^{\prime}$ follows from \cite[Construction 7.3.3]{Markman2n}. Let us provide the details: 
    Let $\{U_i\}$ be an analytic open cover of $X$, fine enough so that there exists a trivialization $\psi_i: \wedge^r \mathcal{G}_i: = \wedge^r \mathcal{G}|_{U_i}\xrightarrow{\sim} \mathcal{O}_{U_i}$
    of the determinant line bundle. let $\eta_{ij}$ denote the \v{C}ech cocycle representing $\wedge^r \mathcal{G}$. Let $\tilde{\eta}_{ij}$ denote the $r$-th root of $\eta_{ij}$, and let $\phi_{ij}: \mathcal{G}_{j}|_{U_{ij}}\xrightarrow{\sim} \mathcal{G}_i|_{U_{ij}}$ be multiplication by $\tilde{\eta}_{ij}^{-1}$. Then the sheaf $\mathcal{G}^{\prime}: = (\{\mathcal{G}_i\}, \{\phi_{ij}\})$ is a $\tilde{\theta}$-twisted sheaf where $\tilde{\theta}_{ijk}: = (\tilde{\eta}_{ij}\tilde{\eta}_{jk}\tilde{\eta}_{ki})^{-1}$. The determinant line bundle $\wedge^r \mathcal{G}^{\prime}$ is trivial, as it is represented by the \v{C}ech cocycle $\tilde{\eta}_{ij}^{-1}\psi_i\psi_j^{-1} = 1$. Note that if $\mathcal{G} = (\{\mathcal{G}_i\}, \{\tilde{\phi}_{ij}\})$ were already a $\theta$-twisted sheaf, then $\mathcal{G}^{\prime}$ is a $\tilde{\theta}\theta$-twisted sheaf. 
    
    For a sheaf $\mathcal{G}^{\prime}$ with trivial determinant and twisted by a class $\tilde{\theta}$ with coefficients in $\mu_r$, we have $\kappa(\mathcal{G}^{\prime}) = ch(\mathcal{G}^{\prime}),
    $
    as $\kappa(\mathcal{G}^{\prime}) = ch(\mathcal{G}^{\prime})\mathrm{exp}\left(\frac{-c_1(\mathcal{G}^{\prime})}{r}\right)$ and the determinant line bundle $\wedge^r\mathcal{G}^{\prime}$ is trivial.
\end{proof}

Let us formulate a generalization of Theorem \ref{Buchweitz-Flenner theorem}: 
\begin{Conj}\label{Buchweitz Flenner conjecture}
 For $B$ an analytic disk, let $\mathcal{Y}\xrightarrow{\pi} B$ be a smooth and proper family such that a coherent sheaf $\mathcal{B}$ over $Y: = \pi^{-1}(0)$ is semiregular. Assume that $\kappa(\mathcal{B})$ remains of Hodge-type over $B$. Then there exists a neighborhood $U\subset B$ containing $0$ such that $\mathcal{B}$ extends over $\pi^{-1}(U)$ to a possibly twisted coherent sheaf $\tilde{\mathcal{B}}$ which is flat over $U$.
\end{Conj}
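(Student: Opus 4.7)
The plan is to reduce Conjecture \ref{Buchweitz Flenner conjecture} to a twisted analogue of Theorem \ref{Buchweitz-Flenner theorem} via Lemma \ref{replacing kappa with chern}, and then to prove that analogue by combining the Toda-Huang obstruction theory (Proposition \ref{TodaDeformations} and Theorem \ref{HuangTheorems}) with the Buchweitz-Flenner diagram (\ref{regular semiregular diagram}). First, I would apply Lemma \ref{replacing kappa with chern} to produce a twisted sheaf $\mathcal{B}'$ on $Y$ with trivial determinant and $ch(\mathcal{B}') = \kappa(\mathcal{B}') = \kappa(\mathcal{B})$. Since $\mathcal{B}'$ is obtained from $\mathcal{B}$ by untwisting the determinant line bundle by a $\mu_r$-cocycle, the Azumaya algebras $\mathcal{E}nd(\mathcal{B})$ and $\mathcal{E}nd(\mathcal{B}')$ are canonically isomorphic, so the traceless Atiyah class and the semiregularity map are preserved, and $\mathcal{B}'$ is again semiregular. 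The Hodge-type hypothesis on $\kappa(\mathcal{B})$ becomes the Hodge-type hypothesis on $ch(\mathcal{B}')$, reducing the statement to a twisted version of Theorem \ref{Buchweitz-Flenner theorem} for $\mathcal{B}'$.

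Next, I would reprove Theorem \ref{Buchweitz-Flenner theorem} in the twisted setting. The Atiyah class of $\mathcal{B}'$ is defined locally by exactly the same construction as in (\ref{differential one forms short exact sequence}), the trace map descends from the canonical trace on $\mathcal{E}nd(\mathcal{B}')$, and the targets $H^{q+2}(Y, \Omega^q_Y)$ of the semiregularity maps $\sigma_q$ are insensitive to twisting. For the Kodaira-Spencer class $\beta \in H^1(Y, \mathcal{T}_Y) \subset HT^2(Y)$ of an infinitesimal direction in $B$, Proposition \ref{TodaDeformations} and Theorem \ref{HuangTheorems} identify the first-order obstruction to extending $\mathcal{B}'$ with the class
\[
\langle \beta,\, \exp At(\mathcal{B}')\rangle \in \mathrm{Ext}^2(\mathcal{B}', \mathcal{B}'),
\]
and the extended object lives naturally in a twisted derived category $D^b(Y, \xi)$ of Toda, automatically accommodating any gerby component $\alpha \in H^2(Y, \mathcal{O}_Y)$ picked up in $HT^2$. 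The diagram (\ref{regular semiregular diagram}) then computes $\sigma_q$ of the degree-$(q+1)$ part of this obstruction as $\langle \beta,\, ch_{q+1}(\mathcal{B}')\rangle$, which vanishes for every $q$ by the Hodge-type hypothesis on $ch(\mathcal{B}')$ together with Griffiths transversality (Remark \ref{Griffiths transversality}). Semiregularity of $\mathcal{B}'$ then forces the obstruction itself to vanish, yielding the first-order extension.

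To integrate this to a flat family over a neighborhood $U \subset B$, I would adopt the DG Lie (or $L_\infty$) reformulation of the Buchweitz-Flenner argument: the semiregularity map refines to an $L_\infty$-quasi-morphism from the DGLA controlling deformations of $\mathcal{B}'$ in the twisted derived category to the abelian DGLA $\bigoplus_q H^{q+2}(Y, \Omega^q_Y)[-q-2]$, so that the Hodge-type hypothesis kills every higher Maurer-Cartan obstruction simultaneously and produces a flat family $\tilde{\mathcal{B}}'$ of twisted sheaves on $\pi^{-1}(U)$. Reversing the determinant correction of Lemma \ref{replacing kappa with chern} then yields the desired twisted extension $\tilde{\mathcal{B}}$ of $\mathcal{B}$. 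The main obstacle will be this $L_\infty$-refinement in the twisted setting: the first-order statement is an essentially immediate consequence of Toda-Huang, but promoting it to all orders requires a careful adaptation of Pridham-style derived deformation theory to the category $\mathrm{Coh}(Y, \xi)$, together with control over how the gerbe class $\xi$ itself may vary as one moves along $B$.
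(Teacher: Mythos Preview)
The statement you are attempting to prove is labeled and treated in the paper as a \emph{conjecture}, not a theorem: the paper does not supply a proof, and indeed the main result (Theorem~\ref{maintheorem}) is explicitly conditional on it. What the paper offers in lieu of a proof is the Remark immediately following the statement, which sketches essentially the same reduction you propose---passing to a $\mu_r$-twisted sheaf with trivial determinant so that $\kappa$ becomes $ch$, identifying the reduced obstruction space with $\ker(Tr:\mathrm{Ext}^2(E,E)\to H^2(X,\mathcal{O}_X))$, and pointing to Pridham's derived deformation theory (\cite{Pridham}, especially Remark~2.26 there) as the likely tool for handling higher obstructions in the twisted setting. The author explicitly says ``the difference must be reconciled'' and ``we plan to explore this in the near future.''

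Your outline therefore does not diverge from the paper's approach so much as restate what the paper already identifies as the expected route, together with its acknowledged gap. The first-order step you describe is fine and is more or less the content of diagram~(\ref{twisted semiregular diagram}). The genuine obstacle---which you yourself flag in your final paragraph---is the passage to all orders: one needs the semiregularity map to lift to an $L_\infty$ morphism from the deformation DGLA of a \emph{twisted} object to an abelian DGLA, compatibly with the variation of the gerbe class along $B$. Neither you nor the paper carries this out; it is precisely the content of the conjecture. So your proposal is not a proof but a restatement of the strategy the paper already records as incomplete.
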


Note that if the Hodge locus of $ch(\mathcal{B)}$ is strictly contained within the locus of $\kappa(\mathcal{B})$, we can replace $\mathcal{B}$ with a $\tilde{\theta}$-twisted sheaf $\mathcal{B}^{\prime}$ by Lemma \ref{replacing kappa with chern}, and then apply Conjecture \ref{Buchweitz Flenner conjecture}.

Pridham in \cite{Pridham} proves a necessary condition for the unique horizontal lift of the Chern character $ch_p(\mathcal{F})$ to remain of Hodge type by mapping the obstruction $o(\mathcal{F})\in \mathrm{Ext}^2(\mathcal{F} , \mathcal{F}\otimes \mathcal{I})$ is mapped to a slightly different target than in \cite{Buchweitz-Flenner}, that is to the truncated hypercohomology $\mathbb{H}^{2p}(X, \Omega^{<p}_X)$:

\begin{Th}[{\cite[page 2]{Pridham}}]
    Let $A$ be a local Artinian $\mathbb{C}$-algebra and $X\rightarrow \mathrm{Spec}(A)$ a smooth morphism of schemes (or Artin stacks) and $I\subset A$ a square-zero ideal with $B: = A/I$. Then for any perfect complex $\mathcal{F}$ over $X^{\prime}: = A\otimes_A B$ with obstruction $o(\mathcal{F})\in \mathrm{Ext}^2_{\mathcal{O}_{X^{\prime}}}(\mathcal{F}, \mathcal{F}\otimes_B I)$ to deforming $\mathcal{F}$, the image of the Chern character $ch_p(\mathcal{F})$ lies in $F^pH^{2p}(X, \Omega^{\bullet}_{X/A})$ if and only if $o(\mathcal{F})$ maps to zero under the map 
    \[
    \mathrm{Ext}^2_{\mathcal{O}_{X^{\prime}}}(\mathcal{F}, \mathcal{F}\otimes_B I)\xrightarrow{\sigma_{p-1}} \mathbb{H}^{2p}(X, \Omega^{<p}_{X/A}),
    \]
    where $\sigma_{p-1}$ is the semi-regularity map. 
\end{Th}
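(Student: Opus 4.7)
The plan is to reduce this statement to a compatibility between the semiregularity map and the connecting homomorphism of the truncation exact sequence for the relative de Rham complex. First I would record the short exact sequence of complexes $0\to\Omega^{\geq p}_{X/A}\to\Omega^{\bullet}_{X/A}\to\Omega^{<p}_{X/A}\to 0$, whose hypercohomology long exact sequence identifies $F^pH^{2p}(X,\Omega^{\bullet}_{X/A})$ with the image of $\mathbb{H}^{2p}(X,\Omega^{\geq p}_{X/A})$. A de Rham class therefore lies in $F^p$ if and only if its image in $\mathbb{H}^{2p}(X,\Omega^{<p}_{X/A})$ under the truncation map vanishes, and the problem reduces to identifying this image with $\sigma_{p-1}(o(\mathcal{F}))$.

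Next I would invoke the Atiyah-class formula $ch_p(\mathcal{F})=\frac{1}{p!}\mathrm{tr}(At(\mathcal{F})^p)$, which places $ch_p(\mathcal{F})$ naturally in the Hodge piece $H^p(X^{\prime},\Omega^p_{X^{\prime}/B})\subset F^pH^{2p}(X^{\prime},\Omega^{\bullet}_{X^{\prime}/B})$. The image of $ch_p(\mathcal{F})$ in $\mathbb{H}^{2p}(X,\Omega^{\bullet}_{X/A})$ is its canonical horizontal Gauss-Manin lift, and the content of the theorem is the control of the Hodge filtration level of this lift in terms of $o(\mathcal{F})$.

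The core computation is a Buchweitz-Flenner-style identification: the obstruction class $o(\mathcal{F})\in\mathrm{Ext}^2_{\mathcal{O}_{X^{\prime}}}(\mathcal{F},\mathcal{F}\otimes_B I)$, viewed as the derivative of the Atiyah class in the square-zero thickening direction, maps under cup-product with $At(\mathcal{F})^q/q!$ and then by the trace into the piece $H^{q+2}(X,\Omega^q_{X/A})$ for each $q$; assembling these components for $q<p$ yields exactly the truncation map applied to the horizontal lift of $ch_p(\mathcal{F})$. I would carry this out by choosing a \v{C}ech resolution together with local lifts of $\mathcal{F}$ to perfect complexes on $X$, so that the Atiyah class of $\mathcal{F}$ lifts to a \v{C}ech cocycle whose failure to be globally closed simultaneously measures $o(\mathcal{F})$ and the failure of $ch_p(\mathcal{F})$ to remain in $F^p$; compatibility of cup product with the trace then yields the desired identification.

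The main obstacle will be making this cochain-level compatibility precise, because one must simultaneously control (i) the Gauss-Manin connection on the relative de Rham complex over $\mathrm{Spec}(A)$, (ii) the obstruction calculus for the perfect complex $\mathcal{F}$ built from the cotangent complex of $X^{\prime}\to\mathrm{Spec}(B)$, and (iii) naturality of the trace with respect to all the cup-product pairings in play. I expect this to be cleanest inside a dg or $\infty$-categorical enhancement of $D^b(X)$, along the lines of Illusie's formalism for the cotangent complex or the simplicial-presheaf approach actually employed by Pridham. Once the compatibility is established, both directions of the biconditional follow immediately from the $F^p$-membership criterion recorded in the first paragraph.
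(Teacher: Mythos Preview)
The paper does not contain a proof of this statement: it is quoted verbatim from \cite[page 2]{Pridham} and immediately followed by a Remark, with no intervening argument. There is therefore nothing in the present paper against which to compare your proposal.

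For what it is worth, your outline is a reasonable high-level sketch of the strategy actually carried out in Pridham's paper: the reduction via the truncation triangle $\Omega^{\geq p}\to\Omega^{\bullet}\to\Omega^{<p}$, the Atiyah-class description of $ch_p$, and the identification of the truncated image with $\sigma_{p-1}(o(\mathcal{F}))$ are exactly the ingredients. You are also right that the delicate point is the cochain-level compatibility between the Gauss--Manin connection, the obstruction calculus, and the trace, and that Pridham handles this via a derived/simplicial enhancement rather than a na\"{i}ve \v{C}ech computation. But none of that appears in the paper under review; the author simply invokes the result.
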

\begin{Rem}
    We refer the reader to \cite[Remark 2.26]{Pridham}, which establishes a generalization of the above theorem for twisted sheaves. We believe that this remark applied to the above theorem provides an aid to establish reduced obstructions for $\mu_r$-twisted perfect complexes on $X$. The statement in Pridham's paper is slightly different than the statement that is required by Conjecture \ref{Buchweitz Flenner conjecture}, and the difference must be reconciled. It is required that the statement of Theorem \ref{Buchweitz-Flenner theorem} holds if we assume that $\kappa(E)$ remains of Hodge-type, except that $\mathcal{E}$ may be a twisted sheaf. 
Let $r>0$ be the rank of $E$, and consider the object $F: = E\otimes \mathrm{det}(E)^{-1/r}$. Note that if $E$ is a untwisted sheaf, then $F$ is an associated twisted object. Since the semiregularity map $\sigma$ is the derived tangent of the Chern character map on obstruction spaces, contraction against the class $\kappa(E)$ induces a map on obstruction spaces $(\sigma-ch(E)\sigma_{1/r})\mathrm{exp}(-c_1(E)/r)$, where $\sigma_{1/r}$ is the map of obstruction spaces associated to $-c_1(E)/r$. Since $\kappa(E)$ remains of Hodge-type in the locus $U$, its obstruction should be annihilated by $(\sigma-ch(E)\sigma_{1/r})$.

    This latter map necessarily descends to a map on the obstruction space $\mathrm{Ext}^2(E, E)/H^2(X, \mathcal{O}_X)$, which is the obstruction space to deforming the twisted sheaf $F$. Identifying this space with 
    $$
    \mathrm{ker}(Tr: \mathrm{Ext}^2(E, E)\rightarrow H^2(X, \mathcal{O}_X)),
    $$
    we conjecture that $\sigma$ annihilates the obstruction to the deformation of the $\mu_r$-twisted sheaf $F$ there by results of \cite{Artamkin} and \cite{Mukai2}. We plan to explore this in the near future. 
    \end{Rem}

Conjecture \ref{Buchweitz Flenner conjecture}, and \cite[Theorem A]{Huang} suggests that we modify diagram (\ref{regular semiregular diagram}) to the following commutative diagram: 

\begin{equation}\label{twisted semiregular diagram}
\begin{tikzcd}
	{HT^2(X)} & {} & {\mathrm{Ext}^2_X(\tilde{E}, \tilde{E})} \\
	\\
	& {H^{q+2}(X, \Omega^q_X)}
	\arrow["{\langle *, -At(\tilde{E})\rangle}", from=1-1, to=1-3]
	\arrow["{\langle*,\, \kappa(E)\rangle}"', from=1-1, to=3-2]
	\arrow["{\sigma_q}", from=1-3, to=3-2]
\end{tikzcd}
\end{equation}

In other words, if we allow $E$ to extend to a twisted sheaf over an analytic subset of the base, $\mathrm{Ext}^2_X(E, E)$ may be interpreted as an obstruction space for gerby deformations along with first order deformations.

\section{Simple semihomogeneous vector bundles on products of Abelian varieties}\label{semihomogeneous section}

Semihomogeneous vector bundles on Abelian varieties are those which are compatible with two natural classes of deformations: The pullback by the translation map and tensorization by line bundles. They are important in the study of vector bundles on Abelian varieties and provide an intriguing interpretation as the analogue of modular sheaves on IHSMs (see \cite[Remark 3.5(2)]{Markman1}).

Let $\Lambda$ be a  $2n$ integral lattice and let $V: = \Lambda\otimes_{\mathbb{Z}}\mathbb{R}$. We have the Abelian variety $A = (V/\Lambda, J)$ with complex structure $J$ and $\widehat{A}: = (\widehat{V}/\widehat{\Lambda}, -J^t)$ the dual Abelian variety. For $\lambda: A\rightarrow \widehat{A}$ a polarization of $A$, the exponent $\mathrm{exp}(\lambda)$ is the smallest natural number such that $\mathrm{ker}(\lambda)\subset \mathrm{ker}(\cdot\, n)$. Recall $\widehat{A}\cong \mathrm{Pic}^0(A)$, the moduli space of degree $0$ line bundles on $A$. Recall also that there exists a universal line bundle $\mathcal{P}\rightarrow A\times \widehat{A}$ known as the \textit{Poincar\'{e} bundle} that parametrizes degree zero line bundles on $A$. The Poincar\'{e} bundle is characterized by the properties that $\mathcal{P}_{\{0\}\times \widehat{A}}\cong \mathcal{O}_{\widehat{A}}$ and $\mathcal{P}_{A\times {\{\alpha}\}}$ is isomorphic to the line bundle on $A$ corresponding to the point $\alpha\in \widehat{A}$.
\begin{Def}
    Let $\mathcal{F}$ be a vector bundle on an Abelian variety $A$. The \textit{slope} $\mu(\mathcal{F})$ of $\mathcal{F}$ is defined as 
    $$
    \mu(\mathcal{F}): = \frac{\mathrm{det}(\mathcal{F})}{\mathrm{rank}(\mathcal{F})}.
    $$
    We may consider the slope as an equivalence class $[\mathcal{L}]\otimes \frac{1}{\ell} \in NS(A)\otimes \mathbb{Q}$, where $\ell$ is a positive integer. 
\end{Def}

\begin{Def}
    A vector bundle $\mathcal{F}$ on $A$ is said to be \textit{semihomogeneous} if for each closed point $a\in A$, there exists a line bundle $\mathcal{L}_a\in \widehat{A}: = \mathrm{Pic}^0(A)$ such that $t^*_a\mathcal{F} \cong \mathcal{F}\otimes \mathcal{L}_a$. 
\end{Def}

\begin{Rem}
    In general, semihomogeneous vector bundles arise as follows: Let $f: A\rightarrow B$ be an isogeny of Abelian varieties of degree $m$, and let $\mathcal{L}$ be a line bundle on $A$. Then $\mathcal{F}: = f_*\mathcal{L}^{\otimes m}$ is a semihomogenous vector bundle on $B$.
\end{Rem}

Let $\mu = [\mathcal{L}]\otimes \frac{1}{\ell}\in NS(A)_{\mathbb{Q}}$, and let 
\begin{equation}
    \phi_{\mathcal{L}}: A\longrightarrow \widehat{A}
\end{equation}
be the homomorphism given by $\phi_{\mathcal{L}}(a) = t^*_a\mathcal{L}\otimes \mathcal{L}^{-1}$.
Define
\begin{equation}
\Phi_{\mu}: = \mathrm{im}((\ell, \phi_{\mathcal{L}}): A\rightarrow A\times \hat{A}),
\end{equation}
and denote by $pr_1: \Phi_{\mu}\rightarrow A$ the projection onto the first factor. Let 
\begin{equation}\label{semihomogeneous kernel of projection map}
    \Sigma_{\mu}: = \mathrm{ker}(pr_1).
\end{equation}
\begin{Prop}[{\cite[Lemma 4.9]{Orlov}}]
    Let $\mathcal{F}$ be a simple semihomogeneous vector bundle. Then $(a, \alpha)\in \Phi_{\mu}$ if and only if $t^*_a\mathcal{F}\cong \mathcal{F}\otimes \mathcal{P}_{\alpha}$, where $\mathcal{P}_{\alpha}: = \mathcal{P}_{A\times\{\alpha\}}$ is the fiber of the Poincar\'{e} bundle $\mathcal{P}\rightarrow X\times \widehat{X}$ representing $\alpha$.
\end{Prop}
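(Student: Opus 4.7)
Introduce the closed algebraic subgroup
\[
K(\mathcal{F}):=\{(a,\alpha)\in A\times\widehat{A}\,:\, t_a^*\mathcal{F}\cong \mathcal{F}\otimes \mathcal{P}_\alpha\}\subset A\times\widehat{A},
\]
and aim to prove $K(\mathcal{F})=\Phi_\mu$. For the easy inclusion $\Phi_\mu \subseteq K(\mathcal{F})$, I would use the representative $(\ell,\mathcal{L})=(r,\det\mathcal{F})$ of the slope class $\mu$, which defines the same image $\Phi_\mu$ as any other representative since scaling $(\ell,\mathcal{L})$ replaces $(\ell,\phi_\mathcal{L})$ by its composition with the isogeny $[k]\colon A\to A$. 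Given $b\in A$, semihomogeneity produces some $\mathcal{Q}\in\widehat{A}$ with $t_b^*\mathcal{F}\cong \mathcal{F}\otimes\mathcal{P}_\mathcal{Q}$; iterating $r$ times and using $t_b^*\mathcal{P}_\mathcal{Q}=\mathcal{P}_\mathcal{Q}$ (since $\mathcal{P}_\mathcal{Q}\in\mathrm{Pic}^0$ and hence $\phi_{\mathcal{P}_\mathcal{Q}}=0$) gives $t_{rb}^*\mathcal{F}\cong \mathcal{F}\otimes \mathcal{P}_{r\mathcal{Q}}$. Taking determinants in the original rank-$r$ identity forces $\phi_{\det\mathcal{F}}(b)=r\mathcal{Q}$, so $(rb,\phi_{\det\mathcal{F}}(b))\in K(\mathcal{F})$, establishing the inclusion.

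For the reverse inclusion I would argue by a dimension count and tangent-space agreement. Semihomogeneity makes $pr_1\colon K(\mathcal{F})\to A$ surjective, and simplicity forces its kernel $\{\alpha:\mathcal{F}\cong\mathcal{F}\otimes\mathcal{P}_\alpha\}$ to be finite (the generic value of $h^0(\mathcal{E}nd(\mathcal{F})\otimes\mathcal{P}_\alpha)$ vanishes, so its support on $\widehat{A}$ is proper). Hence $\dim K(\mathcal{F})=n$, matching $\dim\Phi_\mu=n$ (since $\ker(\ell,\phi_\mathcal{L})\subset A[\ell]$ is finite). The previously established inclusion $\Phi_\mu\subseteq K(\mathcal{F})$ is therefore an inclusion of equi-dimensional closed subgroups, and $\Phi_\mu$ coincides with the identity component $K(\mathcal{F})^0$. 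This can also be checked at the level of tangent spaces: the tangent to $\Phi_\mu$ at $0$ is the graph of $d\phi_{\mathcal{L}}/\ell$, while the first-order condition for $(v,\beta)\in T_0K(\mathcal{F})$ obtained by tracing the vanishing of $\mathrm{At}(\mathcal{F})\cdot v - \beta\cdot \mathrm{id}_{\mathcal{F}}$ in $\mathrm{Ext}^1(\mathcal{F},\mathcal{F})$ reads $r\beta = d\phi_{\det\mathcal{F}}(v)$, so $T_0K(\mathcal{F})$ is the graph of $d\phi_{\det\mathcal{F}}/r$; the slope identity $r[\mathcal{L}]=\ell[\det\mathcal{F}]$ in $NS(A)$ makes these two graphs agree.

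The main obstacle is to eliminate extra connected components of $K(\mathcal{F})$, i.e.\ to show that the two finite kernels $\phi_{\det\mathcal{F}}(A[r])=\ker(pr_1|_{\Phi_\mu})$ and $\{\alpha:\mathcal{F}\cong \mathcal{F}\otimes \mathcal{P}_\alpha\}=\ker(pr_1|_{K(\mathcal{F})})$ coincide as subgroups of $\widehat{A}$. My plan is an order count: $|\phi_{\det\mathcal{F}}(A[r])|=r^{2n}/|A[r]\cap\ker\phi_{\det\mathcal{F}}|$ can be read off directly from the polarization data of $\det\mathcal{F}$, while the cardinality of $\{\alpha:\mathcal{F}\cong \mathcal{F}\otimes \mathcal{P}_\alpha\}$ can be obtained from $\chi(\mathcal{E}nd(\mathcal{F}))$ via Riemann–Roch applied to the semihomogeneous bundle $\mathcal{E}nd(\mathcal{F})$, whose slope is trivial. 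A cleaner alternative would be to invoke Mukai's structure theorem presenting any simple semihomogeneous $\mathcal{F}$ as $p_*\mathcal{M}$ for an isogeny $p\colon A'\to A$ of degree $r$ and a line bundle $\mathcal{M}$ on $A'$: the translation identity $t_a^*\mathcal{F}\cong \mathcal{F}\otimes\mathcal{P}_\alpha$ then pulls back through $p$ to a line-bundle identity on $A'$ governed by the theorem of the square, and the equality $K(\mathcal{F})=\Phi_\mu$ drops out from the corresponding statement for line bundles.
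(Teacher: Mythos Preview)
The paper does not supply a proof of this proposition; it simply records the statement and cites \cite[Lemma 4.9]{Orlov}. So there is no argument in the paper to compare against.

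Your plan is correct in outline and is essentially the standard route through Mukai's theory. The determinant trick for the inclusion $\Phi_\mu\subseteq K(\mathcal{F})$ is valid (and your remark that $\Phi_\mu$ is independent of the representative of $\mu$ is justified by the tangent-space computation you give). The dimension count plus connectedness of $\Phi_\mu$ correctly pins down $\Phi_\mu=K(\mathcal{F})^0$. You are also right that the only genuine content left is ruling out extra components of $K(\mathcal{F})$, i.e.\ matching the two finite kernels of $pr_1$. Of your two proposed endings, the second---invoking Mukai's presentation $\mathcal{F}\cong p_*\mathcal{M}$ for an isogeny $p$ of degree $r$ and a line bundle $\mathcal{M}$---is the cleaner one and is how the result is actually established in the literature: pulling back along $p$ reduces the identity $t_a^*\mathcal{F}\cong\mathcal{F}\otimes\mathcal{P}_\alpha$ to the theorem of the square for $\mathcal{M}$, from which both the equality $K(\mathcal{F})=\Phi_\mu$ and Mukai's formula $|\Sigma_\mu|=r^2$ (in the surface case; $r^{2}$ more generally being replaced by the appropriate power) fall out. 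Your order-count approach via $\chi(\mathcal{E}nd(\mathcal{F}))$ can also be made to work but is more laborious; it is effectively re-deriving Mukai's computation of $|\Sigma(\mathcal{F})|$ rather than quoting it.
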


The following is a crucial property of semihomogeneous vector bundles which will come in handy later:

\begin{Prop}[{\cite[Lemma 4.8.]{Orlov}}]\label{isomorphic orthogonal}
    Simple semihomogeneous vector bundles of the same slope $\mu$ are either isomorphic or orthogonal, i.e. for $\mathcal{F}_i$ and $\mathcal{F}_j$ non-isomorphic simple semihomogeneous vector bundles
    $$
    \mathrm{Ext}^k(\mathcal{F}_i, \mathcal{F}_j) = \mathrm{Ext}^k(\mathcal{F}_j, \mathcal{F}_i) = 0,
    $$
    for all $k$.
\end{Prop}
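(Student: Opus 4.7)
The guiding principle is that $\mathcal{H}om(\mathcal{F}_i,\mathcal{F}_j)$ is a homogeneous (translation-invariant) vector bundle on $A$, whose cohomology is controlled by the classical structure theorem for such bundles. I would proceed in three steps.

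First, I would deduce translation invariance of the Hom sheaf from the equality of slopes. Since $\mathcal{F}_i$ and $\mathcal{F}_j$ share the slope $\mu$, the subvariety $\Phi_\mu \subset A\times\widehat{A}$ governing their semihomogeneity is common to both, and by the preceding proposition of Orlov, for any $(a,\alpha)\in\Phi_\mu$ we have
\[
t_a^*\mathcal{F}_i \cong \mathcal{F}_i\otimes\mathcal{P}_\alpha \quad\text{and}\quad t_a^*\mathcal{F}_j \cong \mathcal{F}_j\otimes\mathcal{P}_\alpha
\]
with the \emph{same} $\alpha$. Because $pr_1\colon\Phi_\mu\to A$ is surjective (its image is $\ell A=A$), every translation $t_a$ is realized in this way, yielding
\[
t_a^*\mathcal{H}om(\mathcal{F}_i,\mathcal{F}_j) \cong \mathcal{H}om(\mathcal{F}_i\otimes\mathcal{P}_\alpha,\mathcal{F}_j\otimes\mathcal{P}_\alpha) \cong \mathcal{H}om(\mathcal{F}_i,\mathcal{F}_j),
\]
so the Hom sheaf is homogeneous.

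Second, I would invoke the classical classification due to Mukai and Miyanishi: every homogeneous vector bundle on an Abelian variety decomposes as a direct sum $\bigoplus_\beta \mathcal{P}_\beta\otimes U_\beta$, where each $U_\beta$ is unipotent (an iterated self-extension of $\mathcal{O}_A$). Since $H^k(A,\mathcal{P}_\beta\otimes U_\beta)$ vanishes in every degree whenever $\beta\neq 0$, we obtain
\[
\mathrm{Ext}^k(\mathcal{F}_i,\mathcal{F}_j)=H^k(A,\mathcal{H}om(\mathcal{F}_i,\mathcal{F}_j))=0 \quad \text{for all } k,
\]
unless $\mathcal{O}_A$ appears as a summand of the Hom sheaf. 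Third, I would rule out this residual case. If $\mathcal{O}_A$ were to occur as a summand then $\mathrm{Hom}(\mathcal{F}_i,\mathcal{F}_j)\neq 0$, giving a nonzero morphism $\phi\colon\mathcal{F}_i\to\mathcal{F}_j$. By Mukai's theorem that simple semihomogeneous vector bundles are $\mu$-stable with respect to every polarization, Schur's lemma forces any such $\phi$ between stable sheaves of the same slope to be an isomorphism, contradicting $\mathcal{F}_i\not\cong\mathcal{F}_j$. The assertion for $\mathrm{Ext}^k(\mathcal{F}_j,\mathcal{F}_i)$ then follows by the symmetric argument, or directly from Serre duality, since $K_A\cong \mathcal{O}_A$.

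The main technical obstacle is appealing to the correct form of the structure theorem in the second step: one needs the \emph{decomposition} into twists of unipotent bundles, rather than merely a filtration by $\mathrm{Pic}^0(A)$-line-bundles, in order to read off cohomology vanishing cleanly. If one only has a filtration, the vanishing still follows by induction using the long exact sequence in cohomology, at the cost of some additional bookkeeping. The third step leans on Mukai's nontrivial classification result that simple semihomogeneous bundles are $\mu$-stable, which would be cited rather than reproven.
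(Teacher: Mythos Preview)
Your proof is correct and follows essentially the same route as the paper: show $\mathcal{H}om(\mathcal{F}_i,\mathcal{F}_j)$ is homogeneous, invoke the decomposition into $\mathrm{Pic}^0$-twists of unipotent bundles, and use stability to rule out a nonzero global section. The only minor variation is that the paper deduces homogeneity from Mukai's theorem that $\mathcal{E}nd(\mathcal{F})$ is homogeneous together with the fact that simple semihomogeneous bundles of fixed slope differ only by a $\mathrm{Pic}^0$-twist, whereas you argue it directly via the shared subvariety $\Phi_\mu$.
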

\begin{proof}
    If $H^0(X, \mathcal{E}xt^k(\mathcal{F}_i, \mathcal{F}_j))\neq 0$, then $\mathcal{F
    }_i \cong \mathcal{F}_j$, as each of the $\mathcal{F}_i$ and $\mathcal{F}_j$ are stable and of the same slope. Every simple semihomogeneous vector bundle of slope $\mu$ is unique up to twisting by a line bundle $\mathcal{M}\in \mathrm{Pic}^0(X)$, and by \cite[Theorem 5.8]{Mukai1}, $\mathcal{E}nd(\mathcal{F})$ is a homogeneous bundle. These two facts imply that $\mathcal{E}xt^k(\mathcal{F}_i, \mathcal{F}_j)$ is homogeneous for each $k$. Thus, $\mathcal{H}om(\mathcal{F}_i, \mathcal{F}_j)\cong \bigoplus_l \mathcal{U}_l\otimes \mathcal{M}_l$, where the $\mathcal{U}_l$ are unipotent vector bundles (see Definition \ref{unitary bundle}) and $\mathcal{M}_l\in \mathrm{Pic}^0(X)$. Therefore, if $\underline{\mathcal{H}om}(\mathcal{F}_i, \mathcal{F}_j)$ does not have a global section, then all of its cohomology vanishes, i.e. $\mathcal{F}_i$ and $\mathcal{F}_j$ are orthogonal. 
\end{proof}

\begin{Prop}
    Let $\mathcal{F}$ be a simple semihomogeneous vector bundle on an Abelian variety $X$. Then $\mathcal{F}$ is semiregular. 
\end{Prop}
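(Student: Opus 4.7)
The plan is to show that the zeroth component $\sigma_0$ of the semiregularity map is already injective; injectivity of the product $\sigma = (\sigma_0, \ldots, \sigma_{n-2})$ then follows for free. Unwinding (\ref{qth semiregularity map}) at $q=0$, one has $\Omega_X^0 = \mathcal{O}_X$ and $-At(\mathcal{F})^0/0! = -\mathrm{Id}_\mathcal{F}$, so $\sigma_0$ coincides with the trace map
\[
Tr \colon \mathrm{Ext}^2(\mathcal{F}, \mathcal{F}) \longrightarrow H^2(X, \mathcal{O}_X)
\]
up to sign. It therefore suffices to prove that $Tr$ is injective; in fact I will show it is an isomorphism.

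To this end, I would invoke Mukai's structure theory for simple semihomogeneous vector bundles \cite{Mukai1}: there exist an isogeny $\pi \colon Y \to X$ of Abelian varieties and a line bundle $\mathcal{L}$ on $Y$ with $\mathcal{F} \cong \pi_* \mathcal{L}$, from which one extracts a decomposition of the endomorphism bundle as a direct sum of degree-zero line bundles,
\[
\mathcal{E}nd(\mathcal{F}) \cong \bigoplus_{\alpha \in K} \mathcal{P}_\alpha,
\]
indexed by a finite subgroup $K \subset \widehat{X}$ (which agrees with the group $\Sigma_\mu$ of (\ref{semihomogeneous kernel of projection map}), as $\alpha \in K$ precisely when $\mathcal{F} \otimes \mathcal{P}_\alpha \cong \mathcal{F}$, by the argument of Proposition \ref{isomorphic orthogonal}). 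Combining with the standard cohomology vanishing $H^i(X, \mathcal{P}_\alpha) = 0$ for all $i$ and every non-trivial $\alpha \in \widehat{X}$, only the summand $\alpha = 0$ contributes, and the induced identification
\[
\mathrm{Ext}^i(\mathcal{F}, \mathcal{F}) = H^i(X, \mathcal{E}nd(\mathcal{F})) = H^i(X, \mathcal{O}_X)
\]
is realized by $Tr$ (up to the scalar $\mathrm{rank}(\mathcal{F})$, via the splitting $\alpha \mapsto \alpha \otimes \mathrm{Id}_\mathcal{F}$). Specializing to $i = 2$ shows that $\sigma_0$ is an isomorphism and $\mathcal{F}$ is semiregular.

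The step I expect to be the main obstacle is securing the decomposition $\mathcal{E}nd(\mathcal{F}) \cong \bigoplus_{\alpha \in K} \mathcal{P}_\alpha$ as a genuine sum of line bundles, as opposed to the weaker statement (used in the proof of Proposition \ref{isomorphic orthogonal}) that $\mathcal{E}nd(\mathcal{F})$ is merely homogeneous, namely of the form $\bigoplus_\ell \mathcal{U}_\ell \otimes \mathcal{M}_\ell$ with $\mathcal{U}_\ell$ unipotent. Under the weaker form, summands with $\mathcal{M}_\ell \ne \mathcal{O}_X$ still have vanishing cohomology, but a non-trivial iterated self-extension of $\mathcal{O}_X$ could a priori contribute extra trace-free classes in $H^2$ and spoil injectivity of $Tr$. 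Simplicity of $\mathcal{F}$ pins $h^0(\mathcal{E}nd(\mathcal{F})) = 1$, and the explicit construction $\mathcal{F} = \pi_*\mathcal{L}$ together with the projection formula (and Mukai's rank identity $\mathrm{rank}(\mathcal{F})^2 = |K|$) is what ultimately rules out such unipotent contributions and delivers the sharper line-bundle splitting required for the argument.
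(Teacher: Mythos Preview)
Your proposal is correct and follows essentially the same route as the paper: both arguments reduce semiregularity to the injectivity of $\sigma_0 = \pm Tr$, and both establish that $Tr\colon \mathrm{Ext}^q(\mathcal{F},\mathcal{F})\to H^q(X,\mathcal{O}_X)$ is an isomorphism. The paper simply invokes \cite[Theorem 5.9]{Mukai1} as a black box (phrased via the reducedness of $\Sigma_\mu$ in characteristic zero), whereas you unpack exactly the content of that theorem through the line-bundle decomposition $\mathcal{E}nd(\mathcal{F})\cong\bigoplus_{\alpha\in K}\mathcal{P}_\alpha$; your ``main obstacle'' paragraph is precisely the step that Mukai's theorem packages, so the two proofs coincide once one looks inside the citation.
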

\begin{proof}
    The group scheme $\Sigma_{\mu}$ is reduced in the characteristic zero case. By \cite[Theorem 5.9]{Mukai1} this is equivalent to the existence of an isomorphism 
    $$
    H^q(i): H^q(X, \mathcal{O}_X)\xrightarrow{\sim} H^q(X,\, \mathcal{E}nd_{\mathcal{O}_X}(\mathcal{F})),
    $$
    for all $q$, which is inverse to the isomorphism
    $$
    H^q(Tr): Ext^q(\mathcal{F}, \mathcal{F})\xrightarrow{\sim} H^q(X, \mathcal{O}_X).
    $$
    The non-vanishing of the Atiyah class implies that $\sigma_q$ is semiregular for all $q$.
\end{proof}

\begin{Lem}
Let $\mathcal{F}$ be a simple semihomogeneous vector bundle on an Abelian variety $X$ of rank $r$. Then $\kappa_k(\mathcal{F})$ vanishes for each $k>0$ and hence vacuously remain of Hodge-type under all K\"{a}hler deformations of $X$. 
\end{Lem}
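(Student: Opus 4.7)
The plan is to reduce the computation of $\kappa(\mathcal{F})$ to a direct substitution using Mukai's formula for the Chern character of a simple semihomogeneous vector bundle on an Abelian variety. Recall that by definition
\[
\kappa(\mathcal{F}) \;=\; ch(\mathcal{F})\cdot\exp\!\left(\tfrac{-c_1(\mathcal{F})}{r}\right),
\]
so vanishing of $\kappa_k$ for $k>0$ is equivalent to showing that $ch(\mathcal{F})$ is precisely the exponential of a degree-two class, namely $r\exp(c_1(\mathcal{F})/r)$.

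First I would recall Mukai's classification of simple semihomogeneous bundles from the paper \emph{Semi-homogeneous vector bundles on an Abelian variety}: if $\mathcal{F}$ has slope $\mu = c_1(\mathcal{F})/r$, then $\mathcal{F}$ arises as $f_*\mathcal{L}$ for some isogeny $f\colon A\to X$ and a line bundle $\mathcal{L}$ on $A$ with $c_1(\mathcal{L}) = f^*c_1(\mathcal{F})/r$ (after suitable choice). From Grothendieck--Riemann--Roch applied to the isogeny $f$ (for which the Todd class is trivial since $A$ and $X$ are Abelian), one obtains
\[
ch(\mathcal{F}) \;=\; ch(f_*\mathcal{L}) \;=\; f_* ch(\mathcal{L}) \;=\; f_*\exp\!\left(\tfrac{f^*c_1(\mathcal{F})}{r}\right) \;=\; r\cdot\exp\!\left(\tfrac{c_1(\mathcal{F})}{r}\right),
\]
using the projection formula and $\deg(f)=r$. (Equivalently this is Mukai's Proposition~6.17 in the same paper, characterizing simple semihomogeneous bundles by their Chern character being a rational power of the rank times the exponential of their slope.)

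Substituting this into the defining expression for $\kappa(\mathcal{F})$ yields
\[
\kappa(\mathcal{F}) \;=\; r\cdot\exp\!\left(\tfrac{c_1(\mathcal{F})}{r}\right)\exp\!\left(\tfrac{-c_1(\mathcal{F})}{r}\right) \;=\; r,
\]
as a class concentrated in $H^0(X,\mathbb{Q})$. Hence $\kappa_0(\mathcal{F}) = r$ and $\kappa_k(\mathcal{F})=0$ for all $k>0$. Since the zero class is of Hodge type in every weight, $\kappa_k(\mathcal{F})$ remains of Hodge type under every K\"ahler deformation of $X$, vacuously.

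The only real content of the argument is the invocation of Mukai's identity $ch(\mathcal{F}) = r\exp(c_1(\mathcal{F})/r)$; this is not quite a routine GRR computation because one must first exhibit $\mathcal{F}$ as a pushforward along an isogeny and check that the slope of the auxiliary line bundle pulls back to $c_1(\mathcal{F})/r$. Once this identity is in hand, the remaining cancellation is formal. I therefore expect this citation of Mukai's structure theory of semihomogeneous bundles to be the only nontrivial ingredient; the rest is a single line of manipulation of formal exponentials in $H^{2\ast}(X,\mathbb{Q})$.
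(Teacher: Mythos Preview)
Your proof is correct. Both your argument and the paper's reduce to Mukai's structure theory of simple semihomogeneous bundles, but via dual isogenies. The paper cites Mukai's Proposition~7.3, which furnishes an isogeny $\pi\colon Y\to X$ with $\pi^*\mathcal{F}\cong\mathcal{L}^{\oplus r}$; one then observes $\pi^*\kappa(\mathcal{F})=\kappa(\mathcal{L}^{\oplus r})=r$ and concludes by injectivity of $\pi^*$ on rational cohomology. You instead realize $\mathcal{F}$ as a pushforward $f_*\mathcal{L}$ along an isogeny and apply Grothendieck--Riemann--Roch (or cite the Chern character formula directly) to obtain $ch(\mathcal{F})=r\exp(c_1(\mathcal{F})/r)$ before substituting. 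The pullback route is marginally cleaner, since it sidesteps the verification that $c_1(\mathcal{L})=f^*(c_1(\mathcal{F})/r)$, which in your approach depends on the specific choice of isogeny in Mukai's construction; conversely, your route makes the identity $ch(\mathcal{F})=r\exp(c_1(\mathcal{F})/r)$ explicit, which is a useful formula in its own right.
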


\begin{proof}
    This follow from \cite[Proposition 7.3]{Mukai1}, which states that there exists an isogeny $\pi: Y\rightarrow X$ of Abelian varieties and a line bundle $L$ on $Y$ such that $\pi^*\mathcal{F}\cong \mathcal{L}^{\oplus r}$, as well as the definition of the $\kappa$-class.   
\end{proof}

In \cite{Magni}, Magni constructs a simple semihomogeneous vector bundle on the product of Abelian varieties that will be of interest to us. Recently, the results of \cite{YuxuanPaper} provide sufficient criteria for derived equivalences of Abelian varieties to be lifted to derived equivalences of generalized Kummer varieties and results in the construction of many such derived equivalences; see \cite[Theorem 2.9]{YuxuanPaper}, in particular.

Let $K_{n+1}: = \mathrm{ker}(\Sigma: \mathbb{Z}^{n+1}\rightarrow \mathbb{Z})$ be the kernel of the summation map and let
\begin{equation}\label{definition of W}
    W: = A\otimes K_{n+1}
\end{equation}
be the $n$-dimensional Abelian variety that is kernel of the summation map $A^{n+1}\rightarrow A$. We abuse notation slightly and denote by $\widehat{W}: = \widehat{A}\otimes K_{n+1}$ (not $\widehat{A\otimes K_{n+1}}$).
For Abelian varieties $A$ and $B$ and $g: A\rightarrow B$ a homomorphism, let $\widehat{g}: \widehat{B}\rightarrow \widehat{A}$ denote the dual homomorphism. Let $f: A\times \widehat{A}\rightarrow B\times \widehat{B}$ be an isomorphism of Abelian varieties and write 
\[
f: = \begin{pmatrix}
f_1 & f_1\\ f_3 & f_4
\end{pmatrix}, \hspace{10mm}
\tilde{f}: = \begin{pmatrix}
    \widehat{f}_4 & \widehat{f}_2\\
    -\widehat{f}_3 & \widehat{f}_1.
\end{pmatrix}
\]
Here, we have identified $A\cong \widehat{\widehat{A}}$ and $B\cong \widehat{\widehat{B}}$ (see \cite[Remark 9.12]{Huybrechts3} for an important nuance with respect to this identification).
\begin{Def}
    Define the \textit{Mukai-Polishchuk} group as the group of symplectomorphisms
    \[
    \mathrm{Sp}(A, B): = \{f: A\times \widehat{A}\xrightarrow{} B\times \widehat{B}: \tilde{f} = f^{-1}\}.
    \]
    When $A = B$, we write $\mathrm{Sp}(A): = \mathrm{Sp}(A, A)$. For $G$ a finite group acting on $A$, we denote by $\mathrm{Sp}(A)^G$ the Mukai-Polishchuk group of $G$-invariant symplectomorphisms, i.e. $f$ such that $\sigma^*f = f$ for all $\sigma\in G$.
\end{Def}

Fix $n\geq 2$. Let us assume that $A$ is an Abelian variety such that there exists a polarization $\lambda: A\rightarrow \widehat{A}$ of type $(d_1, \dots, d_g)$ whose exponent is coprime to $n+1$ and let $\lambda^{\prime}$ be the polarization $\lambda: \widehat{A}\rightarrow \widehat{\widehat{A}}\cong A$ such that $\lambda \circ \lambda^{\prime} = [\mathrm{exp}(\lambda)]$ and let $\lambda^{\delta}: = d_1\lambda^{\prime}$ be the dual polarization of $\lambda$. Let 
\begin{equation}
    \phi_0: K_{n+1}\rightarrow \widehat{K}_{n+1}
\end{equation}
be the canonical map and let $\widehat{\phi}_0$ denote its dual. The equalities $\lambda\circ \lambda^{\delta} = \lambda^{\delta}\circ \lambda = \cdot[\mathrm{exp}(\lambda)]$ and $\phi_{0}\circ \widehat{\phi}_0 = \widehat{\phi}_0\circ \phi = \cdot [n+1]$ follow from \cite[Proposition 3.6]{Magni} and \cite[Section 1.9]{Magni}.

\begin{Prop}\cite[Proposition 4.7(ii)]{Magni}
   Let $n\geq 3$, then there is an injection 
   \[
   \Gamma_0((n+1)\mathrm{exp}(\lambda))\hookrightarrow \mathrm{Sp}(W)^{\mathscr{S}_{n+1}}, 
   \]
   where $\Gamma_0((n+1)\mathrm{exp}(\lambda))\subset \mathrm{SL}_2(\mathbb{Z})$ is the Hecke congruence subgroup of level $(n+1)\mathrm{exp}(\lambda)$.
\end{Prop}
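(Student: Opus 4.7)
The plan is to build the injection explicitly by block-matrix formulas using the composite polarization of $W$, then check symplecticity, multiplicativity, $\mathscr{S}_{n+1}$-equivariance, and injectivity in turn.

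First I would package the hypotheses into the isogenies $\mu := \lambda \otimes \phi_0 : W \to \widehat{W}$ and $\mu^\delta := \lambda^\delta \otimes \widehat{\phi_0} : \widehat{W} \to W$. Tensoring the identities $\lambda \lambda^\delta = [\mathrm{exp}(\lambda)]$ and $\phi_0 \widehat{\phi_0} = [n+1]$ yields $\mu \mu^\delta = \mu^\delta \mu = [N]$ for $N := (n+1)\mathrm{exp}(\lambda)$, so $\mu$ is a polarization of $W$ of exponent $N$, and both $\mu$ and $\mu^\delta$ are self-dual under the identification $W \cong \widehat{\widehat{W}}$. For $\gamma = \left(\begin{smallmatrix} a & b \\ c & d \end{smallmatrix}\right) \in \Gamma_0(N)$, writing $c = Nc'$ with $c' \in \mathbb{Z}$, I would set
\[
f_\gamma := \begin{pmatrix} a \cdot \mathrm{id}_W & b \cdot \mu^\delta \\ c' \cdot \mu & d \cdot \mathrm{id}_{\widehat{W}} \end{pmatrix},
\]
where the $\Gamma_0(N)$-condition is exactly what makes the lower-left block an integer multiple of $\mu$ rather than a $\mathbb{Q}$-isogeny, and each other block is a genuine morphism of Abelian varieties.

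The core of the argument is then three block-matrix calculations. For symplecticity I compute $f_\gamma \tilde{f}_\gamma$: the diagonal entries collapse, via $\mu^\delta \mu = [N]$, to the scalar identity $ad - Nbc' = ad - bc = 1$, and the off-diagonals vanish by cancellation, yielding $\tilde{f}_\gamma = f_\gamma^{-1}$. For multiplicativity I check $f_{\gamma_1\gamma_2} = f_{\gamma_1} \circ f_{\gamma_2}$ analogously, with the same identity $\mu \mu^\delta = [N]$ reassembling cross-terms such as $b_1 \mu^\delta \cdot c'_2 \mu = N b_1 c'_2$ into the corresponding entries of $\gamma_1 \gamma_2$. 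For $\mathscr{S}_{n+1}$-equivariance I observe that $\mathscr{S}_{n+1}$ acts on $K_{n+1} \subset \mathbb{Z}^{n+1}$ by restricting the permutation representation and preserves its standard bilinear form, so $\phi_0$ and $\widehat{\phi_0}$ are equivariant; tensoring with $\lambda$ and $\lambda^\delta$ on the $A$-factor, where $\mathscr{S}_{n+1}$ acts trivially, makes $\mu$ and $\mu^\delta$ equivariant as well, so $f_\gamma \in \mathrm{Sp}(W)^{\mathscr{S}_{n+1}}$.

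Injectivity is then immediate: if $f_\gamma = \mathrm{id}_{W \times \widehat{W}}$ then the scalar blocks force $a = d = 1$, and $b \mu^\delta = 0$, $c' \mu = 0$ force $b = c' = 0$ because $\mu$ and $\mu^\delta$ are nonzero isogenies of torsion-free Abelian varieties, so $\gamma$ is the identity. The hardest step will be the sign and self-duality bookkeeping in the symplecticity check: one must verify carefully that the paper's convention for $\tilde{f}$, combined with $\widehat{\mu} = \mu$ and $\widehat{\mu^\delta} = \mu^\delta$, produces exactly the off-diagonal cancellations in $f_\gamma \tilde{f}_\gamma$ and no spurious doubling. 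This is precisely where one uses that $\lambda$ is a polarization (so $\widehat{\lambda} = \lambda$ under $A \cong \widehat{\widehat{A}}$) and not merely an isogeny.
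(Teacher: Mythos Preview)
The paper does not supply its own proof of this proposition: it is stated with a citation to \cite[Proposition 4.7(ii)]{Magni} and left at that. The proof of the \emph{next} proposition in the paper is where the relevant structure is worked out, and there the argument runs in the opposite direction: it identifies the four $\mathscr{S}_{n+1}$-invariant $\mathrm{Hom}$-modules between $W$ and $\widehat{W}$ as rank-one, generated by $\mathrm{Id}\otimes\mathrm{Id}$, $\lambda\otimes\phi_0$, $\lambda^\delta\otimes\widehat{\phi}_0$, and then reads off an injection $\mathrm{Sp}(W)^{\mathscr{S}_{n+1}}\hookrightarrow\Gamma_0(N)\subset\mathrm{SL}_2(\mathbb{Z})$ by extracting the integer coefficients $a_i$.

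Your construction is exactly the inverse of that map, so your approach is correct and is effectively the same argument read backwards. Your $\mu$ and $\mu^\delta$ are the paper's generators $\lambda\otimes\phi_0$ and $\lambda^\delta\otimes\widehat{\phi}_0$, and your block formula for $f_\gamma$ is precisely the general form displayed in the next proposition with $(a_1,a_2,a_3,a_4)=(a,b,c',d)$. The identity $\mu\mu^\delta=\mu^\delta\mu=[N]$ you isolate is what the paper packages as ``$\lambda\circ\lambda^\delta=[\exp(\lambda)]$ and $\phi_0\circ\widehat{\phi}_0=[n+1]$'' and uses to show the image lands in $\Gamma_0(N)$; you use it to show $f_\gamma\tilde f_\gamma=\mathrm{id}$ and multiplicativity, which is the same computation. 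Your equivariance and injectivity checks are sound. The only caution is the one you already flag: the self-duality bookkeeping for $\mu$ and $\mu^\delta$ depends on the sign conventions in the biduality $W\cong\widehat{\widehat{W}}$ and on $\phi_0$ coming from a symmetric form on $K_{n+1}$, so that step deserves the care you promise it.
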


\begin{Prop}
    The group $\mathrm{Sp}(W)^{\mathscr{S}_{n+1}}$ has the form
    \[
    \mathrm{Sp}(W)^{\mathscr{S}_{n+1}} = \left\{f = \begin{pmatrix} a_1\cdot (\mathrm{Id}\otimes \mathrm{Id}) & a_2\cdot (\lambda^{\delta}\otimes \widehat{\phi}_0)\\ a_3\cdot (\lambda\otimes \phi_0) & a_4\cdot (\mathrm{Id}\otimes \mathrm{Id}) \end{pmatrix} \bigg | \ a_i\in \mathbb{Z}, \ f^{-1} = \tilde{f}\right \}.
    \]
    Furthermore, the condition that $f^{-1} = \tilde{f}$ is equivalent to the condition that $\mathrm{det}(f)=1$.
\end{Prop}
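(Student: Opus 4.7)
The plan is to combine the representation theory of $\mathscr{S}_{n+1}$ on $K_{n+1}$ with a direct computation of the symplectic constraint.

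First I would decompose an arbitrary $f\in \mathrm{Sp}(W)^{\mathscr{S}_{n+1}}$ into its block form $f = \begin{pmatrix}f_1 & f_2 \\ f_3 & f_4\end{pmatrix}$, where $f_1: W\to W$, $f_2:\widehat W\to W$, $f_3: W\to \widehat W$, $f_4:\widehat W\to \widehat W$ are $\mathscr{S}_{n+1}$-equivariant. Using the canonical decomposition $\mathrm{Hom}(A\otimes K,\ B\otimes L)\cong \mathrm{Hom}(A,B)\otimes_{\mathbb{Z}} \mathrm{Hom}(K,L)$ for free abelian groups $K,L$ of finite rank, together with the fact that $\mathscr{S}_{n+1}$ acts trivially on the abelian-variety factors, taking invariants reduces each block to a tensor product of an abelian-variety homomorphism with an $\mathscr{S}_{n+1}$-equivariant map between $K_{n+1}$ and $\widehat K_{n+1}$.

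Next I would apply Schur's lemma. Since $K_{n+1}\otimes \mathbb{Q}$ is the standard irreducible representation of $\mathscr{S}_{n+1}$ and is self-dual over $\mathbb{Q}$, the lattice $\mathrm{End}_{\mathscr{S}_{n+1}}(K_{n+1})$ equals $\mathbb{Z}\cdot\mathrm{Id}$, while $\mathrm{Hom}_{\mathscr{S}_{n+1}}(K_{n+1},\widehat K_{n+1})$ and $\mathrm{Hom}_{\mathscr{S}_{n+1}}(\widehat K_{n+1}, K_{n+1})$ are rank-one, generated by $\phi_0$ and $\widehat\phi_0$ respectively (primitivity follows from direct inspection of the Cartan-type matrix of $\phi_0$ in the root basis of $K_{n+1}$). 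Combined with the generic hypotheses $\mathrm{End}(A) = \mathbb{Z}$, $\mathrm{Hom}(A,\widehat A) = \mathbb{Z}\cdot\lambda$, and $\mathrm{Hom}(\widehat A, A) = \mathbb{Z}\cdot\lambda^{\delta}$, this yields the claimed block form with $a_i\in\mathbb{Z}$.

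For the equivalence with $\det(f)=1$, I would compute $f\,\tilde f$ directly. The relations $\lambda\circ\lambda^{\delta}=[\mathrm{exp}(\lambda)]$ and $\phi_0\circ\widehat\phi_0=[n+1]$ imply
\[
(\lambda^{\delta}\otimes\widehat\phi_0)\circ(\lambda\otimes\phi_0)=(n+1)\,\mathrm{exp}(\lambda)\cdot(\mathrm{Id}\otimes\mathrm{Id}),
\]
so the diagonal blocks of $f\,\tilde f$ both collapse to the scalar $a_1 a_4 - (n+1)\,\mathrm{exp}(\lambda)\cdot a_2 a_3$ times the identity, which is precisely the formal determinant of $f$ in the $2\times 2$ matrix ring spanned by $\mathrm{Id}\otimes\mathrm{Id}$, $\lambda\otimes\phi_0$, and $\lambda^{\delta}\otimes\widehat\phi_0$. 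The off-diagonal blocks vanish after invoking the symmetries $\widehat\lambda=\lambda$ and $\widehat{\widehat\phi}_0=\widehat\phi_0$ matched against the sign convention in the definition of $\tilde f$. Hence $\tilde f = f^{-1}$ if and only if $\det(f)=1$. The main technical obstacle will be carefully tracking Magni's dualization conventions so that these off-diagonal cancellations work out; this is bookkeeping rather than a conceptual difficulty, but demands care about which blocks are self-dual and which carry a sign.
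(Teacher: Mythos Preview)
Your proposal is correct and follows essentially the same route as the paper. The paper cites Magni's Propositions~1.7 and~1.11 for the four Hom-space identifications (which package your Schur-lemma argument together with the genericity hypothesis $\mathrm{End}(A)=\mathbb{Z}$), and for the determinant equivalence it organizes the computation via an embedding $\mathrm{Sp}(W)^{\mathscr{S}_{n+1}}\hookrightarrow \mathrm{GL}(2,\mathbb{Z})$ under which $\tilde f$ becomes $J^{-1}f^{t}J$, so that the symplectic condition and $\det(f)=1$ are immediately equivalent; your direct computation of $f\tilde f$ is the same calculation unpacked.
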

\begin{proof}
    We have the following isomorphisms 
    \begin{align*}
    \mathrm{Hom}(W, W)^{\mathscr{S}_{n+1}}\cong (\mathbb{Z}\cdot \mathrm{Id})\otimes_{\mathbb{Z}} (\mathbb{Z}\cdot \mathrm{Id}),\\
    \mathrm{Hom}(W, \widehat{W})^{\mathscr{S}_{n+1}}\cong (\mathbb{Z}\cdot \lambda)\otimes_{\mathbb{Z}} (\mathbb{Z}\cdot \phi_0),\\
        \mathrm{Hom}(\widehat{W}, W)^{\mathscr{S}_{n+1}}\cong (\mathbb{Z}\cdot \lambda^{\delta})\otimes_{\mathbb{Z}} (\mathbb{Z}\cdot \widehat{\phi}_0),\\
            \mathrm{Hom}(\widehat{W}, \widehat{W})^{\mathscr{S}_{n+1}}\cong (\mathbb{Z}\cdot \mathrm{Id})\otimes_{\mathbb{Z}} (\mathbb{Z}\cdot \mathrm{Id}),
    \end{align*}
    by \cite[Proposition 1.7, Proposition 1.11]{Magni}, and therefore the group is of the claimed structure by definition. Now there exists a group homomorphism
    \[
    \mathrm{Sp}(W)^{\mathscr{S}_{n+1}}\hookrightarrow \mathrm{GL}(2, \mathbb{Z})
    \]
    \[
    f\mapsto \begin{pmatrix} a_1 & a_2\\ \mathrm{exp}(\lambda)(n+1)\cdot a_3& a_4\end{pmatrix},
    \]
    due to the composition properties of $\lambda$ and $\phi_0$, and therefore, the lower left entry must be congruent to $0 \ \mathrm{mod} \ \mathrm{exp}(\lambda)(n+1)$. By construction, the element $\tilde{f}$ is given by 
    \[
    \tilde{f} = \begin{pmatrix} a_4\cdot (\mathrm{Id}\otimes \mathrm{Id}) & -a_2\cdot (\lambda^{\delta}\otimes \widehat{\phi}_0)\\ -a_3\cdot (\lambda\otimes \phi_0) & a_1\cdot (\mathrm{Id}\otimes \mathrm{Id}) \end{pmatrix},
    \]
    so we have the relation that $f^{-1} = J^{-1}f^tJ = \tilde{f}$, where $J = \begin{pmatrix}
        0 & \mathrm{Id}\\ -\mathrm{Id} & 0
    \end{pmatrix}$, which indicates that $f$ is represented by a symplectic matrix. It then follows that $\mathrm{det}(f) = 1$. Note that this in turn implies that $\mathrm{gcd}(\mathrm{exp}(\lambda), n+1) = 1$.
\end{proof}
\begin{Rem}
    We note that if $n+1$ is odd, there exists an Abelian surface $A$ and a polarization $\lambda: A\rightarrow \widehat{A}$ such that $\mathrm{gcd}(\mathrm{exp}(\lambda), n+1) = 1$.
\end{Rem}

\begin{Th}[{\cite[Theorem 2, Theorem 3, Theorem 4]{Magni}}]\label{Magni main theorems} Assume that $n\geq 2$ is even and $\mathrm{gcd}(n+1, [\mathrm{exp}(\lambda)]) = 1$. Let $\mathscr{S}_{n_1}$ be the symmetric group on $n+1$ elements.
\begin{enumerate}
            \item[i)] 
    There exists a short exact sequence

              \begin{equation}
        0\rightarrow \mathbb{Z}\times A[n+1]\rightarrow \mathrm{Aut}(D^b(W))^{\mathscr{S}_{n+1}}\rightarrow \mathrm{Sp}(W)^{\mathscr{S}_{n+1}}\rightarrow 0
           \end{equation}

           \item[ii)] The group of symplectomorphisms
           \[
           \mathrm{Sp}(W, \widehat{W})^{\mathscr{S}_{n+1}}\neq 0
           \]
           and is a right torsor under $\mathrm{Sp}(W)^{\mathscr{S}_{n+1}}$. 
           \item[iii)] The set of invariant derived equivalences
           \[
           \mathrm{Eq}(D^b(W), D^b(\widehat{W}))^{\mathscr{S}_{n+1}}\neq 0
           \]
           and is a right torsor under $\mathrm{Aut}(D^b(W))^{\mathscr{S}_{n+1}}$.
        \end{enumerate}
 
\end{Th}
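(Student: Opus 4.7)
The plan is to apply the Orlov--Polishchuk description of autoequivalences of $D^b(W)$ as a central extension of $\mathrm{Sp}(W)$ by shifts together with translation--twist pairs, and then to take $\mathscr{S}_{n+1}$-invariants of the resulting exact sequence. The starting point is the exact sequence
\begin{equation*}
0\to \mathbb{Z}\times (W\times \widehat{W})\to \mathrm{Aut}(D^b(W))\to \mathrm{Sp}(W)\to 0,
\end{equation*}
in which a pair $(w,\xi)\in W\times \widehat{W}$ acts on $\mathcal{F}\in D^b(W)$ by $\mathcal{F}\mapsto t_w^*\mathcal{F}\otimes P_{\xi}$. Applying the $\mathscr{S}_{n+1}$-invariants functor termwise and analyzing the ensuing long exact sequence in group cohomology provides the backbone of all three assertions.

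For part (i), the kernel computation is concrete: an element $(a_1,\dots,a_{n+1})\in W$ is $\mathscr{S}_{n+1}$-fixed only when all coordinates agree, and the summation constraint $\sum a_i=0$ forces the common coordinate to lie in $A[n+1]$. The same analysis for $\widehat{W}$, combined with the polarization-induced identification $\widehat{A}[n+1]\cong A[n+1]$ available under the coprimality hypothesis $\gcd(n+1,\mathrm{exp}(\lambda))=1$, yields the stated kernel $\mathbb{Z}\times A[n+1]$. Surjectivity onto $\mathrm{Sp}(W)^{\mathscr{S}_{n+1}}$ amounts to the vanishing of the connecting map into $H^1(\mathscr{S}_{n+1},\,\mathbb{Z}\times(W\times\widehat{W}))$; this vanishes by divisibility of the coefficient group (as an Abelian variety) together with the coprimality assumption, which guarantees that the relevant $(n+1)$-torsion is split off.

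For part (ii), I would construct an explicit $\mathscr{S}_{n+1}$-invariant symplectomorphism $W\times \widehat{W}\to \widehat{W}\times W$ from the tensor building blocks $\lambda\otimes\phi_0$ and $\lambda^{\delta}\otimes\widehat\phi_0$ identified in the preceding proposition. An anti-diagonal block element of the required form, with integer entries chosen so that $\mathrm{det}(f)=1$, automatically satisfies $\widetilde{f}=f^{-1}$, and such integers exist precisely because of the coprimality hypothesis; this is analogous to the description of $\mathrm{Sp}(W)^{\mathscr{S}_{n+1}}$ given in the preceding proposition, with diagonal and anti-diagonal roles swapped. Once one invariant element exists, the torsor property under $\mathrm{Sp}(W)^{\mathscr{S}_{n+1}}$ is formal: right composition acts freely and transitively on the (now nonempty) set.

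For part (iii), the task is to lift the invariant symplectomorphism from (ii) to a Fourier--Mukai equivalence $D^b(W)\to D^b(\widehat{W})$ in an $\mathscr{S}_{n+1}$-equivariant manner. I would extend the Orlov sequence to the mixed setting, identifying $\mathrm{Eq}(D^b(W),D^b(\widehat{W}))$ with a torsor over $\mathrm{Aut}(D^b(W))$ whose symplectic shadow is $\mathrm{Sp}(W,\widehat{W})$, and then lift via a Fourier--Mukai kernel built from a semihomogeneous vector bundle whose slope is adapted to the chosen symplectomorphism. The hardest step will be verifying that the obstruction to equivariance vanishes: this obstruction lives in $H^2(\mathscr{S}_{n+1},\,\mathbb{Z}\times A[n+1])$, and its annihilation follows from combining the fact that $|\mathscr{S}_{n+1}|=(n+1)!$ kills higher group cohomology with the coprimality of $n+1$ and $\mathrm{exp}(\lambda)$. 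A cleaner alternative would be to realize the kernel directly as the pushforward of a $\mathscr{S}_{n+1}$-equivariant semihomogeneous bundle under a symmetric isogeny of $W\times \widehat{W}$ built from the polarization data, making equivariance manifest at the level of the kernel rather than at the level of cohomological obstructions. Once equivariance is secured, the torsor structure under $\mathrm{Aut}(D^b(W))^{\mathscr{S}_{n+1}}$ is automatic from (i).
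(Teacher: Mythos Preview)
The paper does not prove this theorem; it is quoted from Magni with a citation and no argument. What the paper does contain, immediately after the statement, is a glimpse of Magni's \emph{constructive} approach: one writes down an explicit element $g\in \mathrm{Sp}(W,\widehat{W})^{\mathscr{S}_{n+1}}$ in the block form dictated by the preceding proposition, chooses a line bundle $\mathcal{L}$ with $\phi_{\mathcal{L}}=(n+1)g$, and takes $\mathcal{F}=[n+1]_*\mathcal{L}^{\otimes(n+1)}$ as the semihomogeneous kernel. So your ``cleaner alternative'' at the end of part (iii) is in fact the route Magni (and the paper) actually follows, whereas your primary line of attack via group cohomology of the Orlov sequence is genuinely different.

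That said, your cohomological outline has real gaps. First, the kernel computation: in the Orlov sequence the second factor is $W\times \mathrm{Pic}^0(W)$, and $\mathrm{Pic}^0(W)=\widehat{A}\otimes \widehat{K}_{n+1}=\widehat{A}^{\,n+1}/\Delta(\widehat{A})$, not $\widehat{A}\otimes K_{n+1}$. The $\mathscr{S}_{n+1}$-invariants of the quotient $\widehat{A}^{\,n+1}/\Delta(\widehat{A})$ are \emph{zero} (any invariant class lifts to a constant tuple, hence dies in the quotient), so the kernel $\mathbb{Z}\times A[n+1]$ comes entirely from $W^{\mathscr{S}_{n+1}}\cong A[n+1]$ and there is no $\widehat{A}[n+1]$ contribution to explain away via the polarization. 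Your ``same analysis for $\widehat{W}$'' conflates Magni's abuse-of-notation $\widehat{W}:=\widehat{A}\otimes K_{n+1}$ with $\mathrm{Pic}^0(W)$; the paper explicitly warns that these are different $\mathbb{Z}[\mathscr{S}_{n+1}]$-modules.

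Second, your surjectivity argument for (i) and your obstruction argument for (iii) both appeal to vanishing of group cohomology ``by divisibility''. Divisibility of an abelian variety does not force $H^i(G,-)$ to vanish: already $H^1(\mathbb{Z}/2,\mathbb{C}^\times)\cong\mu_2$ is nonzero despite $\mathbb{C}^\times$ being divisible. One has to actually compute $H^1(\mathscr{S}_{n+1},W\times\mathrm{Pic}^0(W))$ (or, better, bypass it by exhibiting explicit equivariant lifts, which is what Magni does). Likewise, the claim that $\lvert\mathscr{S}_{n+1}\rvert=(n+1)!$ together with the coprimality hypothesis kills the $H^2$ obstruction is not an argument: $(n+1)!$ shares many prime factors with $\lvert A[n+1]\rvert=(n+1)^4$. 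Magni's route avoids this entirely by building the kernel as a manifestly $\mathscr{S}_{n+1}$-invariant semihomogeneous bundle and then checking linearizability directly (Proposition~\ref{Conditions for equivariance} in the paper).
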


Assume that $\mathrm{gcd}(n+1, [\mathrm{exp}(\lambda)]) =1$ and pick integers $m_1$ and $m_2$ such that $m_1[\mathrm{exp}(\lambda)]-(n+1)m_2 =1$. Consider the element 
\[
g: =\begin{pmatrix}
\frac{1}{n+1}\lambda \otimes \phi_0 & -\frac{1}{n+1} \mathrm{id}\otimes\phi_0\\
-\frac{1}{n+1} \mathrm{id}\otimes\phi_0 & m_2\lambda^{\delta}\otimes \phi_0
\end{pmatrix}\in \mathrm{Sp}(W, \widehat{W})^{\mathscr{S}_{n+1}}.
\]

The element $g$ corresponds via Theorem \ref{Magni main theorems} to a line bundle $\mathcal{L}\in \mathrm{Pic}(W\times \widehat{W})$ such that $\phi_{\mathcal{L}} = (n+1)g$. The semihomoengenous vector bundle we aim to study is given by 
\begin{equation}
    \mathcal{F}: = [n+1]_*\mathcal{L}^{\otimes n+1}.
\end{equation}

We describe its $\mathscr{S}_{n+1}$-equivariance properties in the next section. 

\section{Semiregular vector bundles on products of Kummer-type IHSMs}\label{section 4 semiregularity bundles}
In this section, we provide the background for understanding derived equivalences of generalized Kummer varieties my means of equivariant derived categories. We start by considering an arbitrary projective variety $X$.
\subsection{Equivariant derived categories}
Let $G$ be a finite group acting on a smooth projective variety $X$. For an object $\mathcal{E}\in D^b(X)$, a \textit{$G$-linearization} on $\mathcal{E}$ is a collection of isomorphisms
\begin{equation}\label{linearization}
    \lambda_g: \mathcal{E}\xrightarrow{\sim} g^*\mathcal{E},
\end{equation}
for each $g\in G$, such that $\lambda_e = \mathrm{Id}_{\mathcal{E}}$ and $\lambda_{gh} = h^*\lambda_g\circ \lambda_h$. We denote a $G$-linearized object by the pair $(\mathcal{E}, \lambda)$.

The triangulated category 
\begin{equation}\label{equivariant derived category}
    D^b_G(X)
\end{equation}
is the derived category consisting of $G$-linearized objects in $D^b(X)$\footnote{The category $D^b_G(X)$ is \textit{not} the derived category of complexes of $G$-equivariant sheaves in analogy to the usual derived category. However, it is a triangulated category with a $t$-structure whose heart is the Abelian category of equivariant sheaves on $X$; see \cite[Section 2]{BernsteinLunts}.}. We refer to $D^b_G(X)$ as the \textit{$G$-equivariant derived category} of $X$.

\label{Schur multiplier condition}
    As a bridge between the derived categories $D^b(X)$ and $D^b_G(X)$, we consider the category of \textit{$G$-invariant} objects in $D^b(X)$:

    \begin{Def} We say that an object $\mathcal{E}\in D^b(X)$ is $G$-invariant if $g^*\mathcal{E}\cong \mathcal{E}$ for each $g\in G$.
    \end{Def}
    Clearly, a necessary condition for an object in $D^b(X)$ to be $G$-equivariant is that it is $G$-invariant. The obstruction to a $G$-invariant object carrying a $G$-linearization is a class $c\in H^2(G, \mathbb{C}^*)$, known as the Schur multiplier of $G$, as worked out in \cite{Ploog}.

\subsection{Inflation} Let $G$ be a finite group acting on projective varieties $X$ and $Y$. Denote by $G_{\Delta}\subset G\times G$ the diagonal subgroup (we include ``$\Delta$" to emphasize the diagonal action). We write $\mathrm{Eq}(D^b(X), D^b(Y))^{G_{\Delta}}$ to denote $G_{\Delta}$-invariant equivalences, and $\mathrm{Eq}(D^b_G(X), D^b_G(Y))$ to denote equivalences of triangulated categories ($\ref{equivariant derived category}$). The elements of the latter consist of Fourier-Mukai kernels that carry a $(G\times G)$-equivariant structure.

\begin{Def}
    Define the intermediary set 
    \begin{equation}
        \mathrm{Eq}_{G_{\Delta}}(D^b(X), D^b(Y)): = \{(\mathcal{G}, \lambda)\in D^b_{G_{\Delta}}(X\times Y): \mathrm{FM}_{(\mathcal{G}, \lambda)}: D^b(X)\xrightarrow{\sim} D^b(Y)\},
    \end{equation}
    which interposes between the sets $\mathrm{Eq}(D^b(X), D^b(Y))^{G_{\Delta}}$ and $\mathrm{Eq}(D^b_G(X), D^b_G(Y))$ in a manner we will explain below. 
\end{Def}
Note that in light of Remark \ref{Schur multiplier condition}, a sufficient condition for the natural map 
\[
\mathrm{Eq}(D^b(X), D^b(Y))^{G_{\Delta}}\longrightarrow \mathrm{Eq}_{G_{\Delta}}(D^b(X), D^b(Y))
\]
to be injective is that $H^2(G, \mathbb{C}^*) = 0$. Furthermore, the sets $\mathrm{Eq}_{G_{\Delta}}(D^b(X), D^b(Y))$ and $\mathrm{Eq}(D^b_G(X), D^b_G(Y))$ are related by an inflation map
\begin{equation}\label{inflation map}
    \mathrm{inf}_{G_{\Delta}}^{G^2}(\mathcal{G}, \lambda): = \bigoplus_{[g]\in G_{\Delta}\backslash G\times G} g^*\mathcal{G}.
\end{equation}
For notation, we write 
\begin{equation}
    G^2\cdot \mathcal{G}: =  \mathrm{inf}_{G_{\Delta}}^{G^2}(\mathcal{G}, \lambda).
\end{equation}
The $(G\times G)$-linearization 
$$
\lambda^G_h: G^2\cdot \mathcal{G}\xrightarrow{\sim} h^*(G^2\cdot \mathcal{G})
$$
on the right-hand side is given by extending linearly the $\mathcal{G}$-linearization $\lambda_h$, for $h\in G_{\Delta}$, and permutation of the summands for $h\in \Delta G\backslash G^2$. We will discover that for a general derived equivalence of generalized Kummer varieties, it will prove sufficient to have control over particular classes in $H^2(G, \mathbb{C}^*)$ known as \textit{Schur multipliers}.
\begin{Prop}[\cite{Ploog}]\label{inflation prop}
    For $G$ a finite group acting faithfully on smooth projective varieties $X$ and $Y$, there is an exact sequence of groups
    $$
    0\rightarrow Z(G)\rightarrow \mathrm{Aut}(D^b(X))^G\rightarrow \mathrm{Aut}(D^b_G(X)),
    $$
    and a $G$-equivariant map of pseudo-torsors
    \begin{equation}\label{inflation correspondence}
    \mathrm{Eq}_{G_{\Delta}}(D^b(X), D^b(Y))\xrightarrow{\mathrm{inf}^{G^2}_{G_{\Delta}}} \mathrm{Eq}(D^b_G(X), D^b_G(Y)).
    \end{equation}
\end{Prop}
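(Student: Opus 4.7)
The plan is to construct each arrow in the exact sequence of the first part and verify exactness term-by-term, and then verify separately that the inflation map of the second part preserves the equivalence property and respects the action giving the pseudo-torsor structure. Throughout, the cleanest framework is to represent autoequivalences as Fourier-Mukai kernels on $X\times X$ and invariant equivalences as kernels on $X\times Y$, so that $G$-invariance and $G$-equivariance become statements about the existence and choice of linearizations on specific sheaves; the Schur obstruction in $H^2(G,\mathbb{C}^*)$ mentioned in the discussion preceding Remark about Schur multipliers then controls the ambiguity.

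For the first part, the map $Z(G)\to\mathrm{Aut}(D^b(X))^G$ sends a central element $z$ to the pullback $z^*$. Centrality gives $g^*\circ z^*=z^*\circ g^*$ for every $g\in G$, placing $z^*$ in the $G$-invariant autoequivalences, and injectivity follows from the assumed faithfulness of the $G$-action on $X$. The map $\mathrm{Aut}(D^b(X))^G\to \mathrm{Aut}(D^b_G(X))$ is defined by transport of linearization: given a choice of $G$-invariance data $\eta_g\colon \Phi\xrightarrow{\sim}(g^{-1})^*\Phi g^*$ (equivalently, a $G_\Delta$-linearization of the Fourier-Mukai kernel representing $\Phi$), one equips $\Phi(\mathcal{E})$ with a new linearization obtained by composing $\eta$ with the given $\lambda$ on $\mathcal{E}$. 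That this composition with $Z(G)$ is zero is immediate from the canonical linearization: for $(\mathcal{E},\lambda)\in D^b_G(X)$ the isomorphism $\lambda_z\colon\mathcal{E}\xrightarrow{\sim}z^*\mathcal{E}$ assembles into a natural transformation $\mathrm{id}\simeq z^*$ of endofunctors of $D^b_G(X)$.

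For exactness at $\mathrm{Aut}(D^b(X))^G$, suppose $\Phi$ maps to the identity in $\mathrm{Aut}(D^b_G(X))$. Evaluating the resulting natural isomorphism on skyscraper sheaves supported on $G$-orbits of closed points, and using that these admit canonical equivariant structures, forces $\Phi$ to permute skyscrapers and hence to be pullback by an automorphism $\varphi\in\mathrm{Aut}(X)$ that commutes with every element of $G$. By faithfulness the centralizer of the image of $G$ inside $\mathrm{Aut}(X)$ equals $Z(G)$, so $\varphi$ lies in the image of $Z(G)$, giving exactness.

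For the second part, I would first check that $G^2\cdot \mathcal{G}=\bigoplus_{[g]\in G_\Delta\backslash G^2} g^*\mathcal{G}$ carries a natural $(G\times G)$-linearization: diagonal elements of $G_\Delta$ act summand-wise via the given $\lambda$, while the remaining elements permute summands according to the left-coset action, and the cocycle condition follows from $\lambda$ being a $G_\Delta$-linearization. That inflation sends equivalences to equivalences reduces to a Fourier-Mukai composition computation: if $\mathcal{G}$ gives an equivalence, then an orbit decomposition identifies the composition $(G^2\cdot \mathcal{G})^\vee\star(G^2\cdot \mathcal{G})$ on $X\times X$ with $\bigoplus_{[g]\in G_\Delta\backslash G^2}(\mathrm{id},g)_*\mathcal{O}_{\Delta_X}$, which is precisely the equivariant structure on $\mathcal{O}_{\Delta_X}$ representing the identity of $D^b_G(X)$. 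The pseudo-torsor structure on both sides comes from post-composition by $\mathrm{Aut}(D^b(X))^{G_\Delta}$ and by $\mathrm{Aut}(D^b_G(X))$ respectively, and compatibility of inflation with composition of kernels exhibits the inflation map as equivariant.

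The main obstacle is the bookkeeping around the $H^2(G,\mathbb{C}^*)$-ambiguity in choosing $G_\Delta$-linearizations, which enters both in making the map of the first part well-defined and in verifying that the inflation construction respects composition without accumulating a Schur multiplier; working consistently with linearized kernels rather than abstract equivalences is the key device that localizes the difficulty to a single cohomological check.
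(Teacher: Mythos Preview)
The paper does not prove this proposition; its proof reads in full ``See \cite[Theorem 6]{Ploog} and \cite[Theorem 6.15]{Magni}.'' So you are attempting something the paper itself defers, and the second half of your proposal (the $(G\times G)$-linearization on the inflated kernel, the convolution check that inflation preserves equivalences, and compatibility with the pseudo-torsor action) is along the right lines and matches Ploog's approach.

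Your exactness argument at $\mathrm{Aut}(D^b(X))^G$, however, contains a genuine error. You assert that ``by faithfulness the centralizer of the image of $G$ inside $\mathrm{Aut}(X)$ equals $Z(G)$.'' Faithfulness only gives an injection $G\hookrightarrow\mathrm{Aut}(X)$; it says nothing about the centralizer $C_{\mathrm{Aut}(X)}(G)$, which is typically much larger than $Z(G)$. For instance, let $G=\mathbb{Z}/2$ act on $\mathbb{P}^1\times\mathbb{P}^1$ by swapping the factors: the diagonal copy of $\mathrm{PGL}_2$ centralizes $G$. So even if your skyscraper argument produced an automorphism $\varphi$ commuting with $G$, you have not shown $\varphi\in G$, let alone $\varphi\in Z(G)$.

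The argument that actually works stays at the level of kernels and uses Krull--Schmidt. Suppose a $G_\Delta$-linearized kernel $(\mathcal{P},\lambda)$ inflates to the identity of $D^b_G(X)$, whose kernel is $\bigoplus_{h\in G}\mathcal{O}_{\Gamma_h}$. Forgetting the linearization, $\bigoplus_{g\in G}(1,g)^*\mathcal{P}\cong\bigoplus_{h\in G}\mathcal{O}_{\Gamma_h}$; since each $\mathcal{O}_{\Gamma_h}$ is simple, Krull--Schmidt forces $\mathcal{P}\cong\mathcal{O}_{\Gamma_{h_0}}$ for some $h_0\in G$. Now $G_\Delta$-invariance of $\mathcal{P}$ gives $(g,g)^*\mathcal{O}_{\Gamma_{h_0}}=\mathcal{O}_{\Gamma_{g^{-1}h_0g}}\cong\mathcal{O}_{\Gamma_{h_0}}$ for all $g$, and \emph{this} is where faithfulness enters: equal graphs force $g^{-1}h_0g=h_0$, hence $h_0\in Z(G)$. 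Faithfulness is used to pass from graphs back to group elements, not to bound the ambient centralizer.
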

\begin{proof}
    See \cite[Theorem 6]{Ploog} and \cite[Theorem 6.15]{Magni}.
\end{proof}
\begin{Rem}
     The sets $\mathrm{Eq}(D^b(X), D^b(Y))^G$ and $\mathrm{Eq}(D^b_G(X), D^b_G(Y))$ are pseudo-torsors of $\mathrm{Aut}(D^b(X))^G$ and $\mathrm{Aut}(D^b_G(X))$ respectively. Let $G = \mathscr{S}_{n+1}$. Then $Z(G) = 0$ for $n\geq 2$, so the inflation map $\mathrm{inf}^{G^2}_{G}$ is injective, as the map
     \[
     \mathrm{Aut}(D^b(X))^G\longrightarrow \mathrm{Aut}(D^b_G(X))
     \]
     is injective.
\end{Rem}
\subsection{The BKR correspondence}
Let $X$ be a projective variety and $G$ a finite group acting on $X$. 
\begin{Def}
    A \textit{$G$-cluster} on $X$ is a $G$-invariant zero-dimensional closed subscheme $Z$ on $X$ such that $H^0(Z, \mathcal{O}_Z)\cong \mathbb{C}[G]$ is an isomorphism of $G$-modules. 
\end{Def}
Note that $G$-clusters generalize $G$-invariant subschemes that are free $G$-orbits; every free $G$-orbit is clearly a $G$-cluster, but there are (necessarily non-reduced) $G$-clusters that are not free $G$-orbits. Let $G = \mathscr{S}_{n+1}$. We let $\mathrm{Hilb}^{\mathscr{S}_{n+1}}(A^{n+1})$ denote the irreducible component of the moduli space of $\mathscr{S}_{n+1}$ clusters on $A^{n+1}$ that contains all points corresponding to free orbits of the permutation action. Closed points of $\mathrm{Hilb}^{\mathscr{S}_{n+1}}(A^{n+1})$ therefore parametrize length-$(n+1)!$, $0$-dimensional $\mathscr{S}_{n+1}$-invariant subschemes $Z$ of $A^{n+1}$ such that $H^0(Z, \mathcal{O}_Z)$ is the $\mathscr{S}_{n+1}$-regular representation. The component $\mathrm{Hilb}^{\mathscr{S}_{n+1}}(A^{n+1})$ is a reduced component of the moduli space of $G$-clusters. 

\begin{Prop}[{\cite[Theorem 5.1]{Haiman01} and \cite[Section 4]{Haiman99}}]\label{Hilbert scheme isomorphic to moduli space of clusters}
The Hilbert scheme $A^{[n+1]}$ is isomorphic to the reduced component $\mathrm{Hilb}^{\mathscr{S}_{n+1}}(A^{n+1})$ of the moduli space of $\mathscr{S}_{n+1}$-clusters.
\end{Prop}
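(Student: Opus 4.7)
The plan is to construct an explicit morphism $\phi\colon A^{[n+1]}\to \mathrm{Hilb}^{\mathscr{S}_{n+1}}(A^{n+1})$ via the \emph{isospectral Hilbert scheme} and to show it is an isomorphism, following Haiman's strategy. Since $A$ is a smooth surface, Fogarty's theorem gives that $A^{[n+1]}$ is smooth of dimension $2(n+1)$, and the Hilbert--Chow morphism $h\colon A^{[n+1]}\to A^{(n+1)}$ is a resolution of singularities of the symmetric product. My first step is to form the fiber product
\[
X_{n+1} := \bigl(A^{[n+1]}\times_{A^{(n+1)}} A^{n+1}\bigr)_{\mathrm{red}},
\]
equipped with its projections $p\colon X_{n+1}\to A^{[n+1]}$ and $q\colon X_{n+1}\to A^{n+1}$. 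The permutation action of $\mathscr{S}_{n+1}$ on the second factor extends to $X_{n+1}$, and $p$ is $\mathscr{S}_{n+1}$-equivariant with trivial action on the target.

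The key technical input is Haiman's polygraph (or $n!$) theorem \cite[Theorem~5.1]{Haiman01}, which asserts that $p\colon X_{n+1}\to A^{[n+1]}$ is flat of constant fiber length $(n+1)!$, with each scheme-theoretic fiber $p^{-1}(\xi)$ a closed $\mathscr{S}_{n+1}$-invariant subscheme of $A^{n+1}$ whose global sections realize the regular representation of $\mathscr{S}_{n+1}$. In other words, $p$ exhibits $X_{n+1}\subset A^{[n+1]}\times A^{n+1}$ as a flat family of $\mathscr{S}_{n+1}$-clusters in $A^{n+1}$ parametrized by $A^{[n+1]}$. By the universal property of the Hilbert scheme of $G$-clusters, this produces the desired morphism $\phi$; since the locus of reduced length-$(n+1)$ subschemes of $A$ maps to free $\mathscr{S}_{n+1}$-orbits, the image of $\phi$ lands in the irreducible component $\mathrm{Hilb}^{\mathscr{S}_{n+1}}(A^{n+1})$ that contains the free orbits.

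To upgrade $\phi$ to an isomorphism I would argue as follows. On the open dense locus $U\subset A^{[n+1]}$ parametrizing subschemes supported at $n+1$ distinct points, $p^{-1}(U)$ is the corresponding family of free $\mathscr{S}_{n+1}$-orbits, and $\phi|_U$ identifies $U$ with the analogous open locus in $\mathrm{Hilb}^{\mathscr{S}_{n+1}}(A^{n+1})$; hence $\phi$ is birational. Both source and target are proper over $A^{(n+1)}$ via compatible Hilbert--Chow morphisms, so $\phi$ is proper. One then checks bijectivity on closed points using Haiman's description of the fibers of $p$ (each point of the reduced component of the $\mathscr{S}_{n+1}$-Hilbert scheme is uniquely a fiber $p^{-1}(\xi)$). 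Because $A^{[n+1]}$ is smooth, Zariski's main theorem then forces $\phi$ to be an isomorphism onto its image, and by properness and irreducibility this image is the entire reduced component.

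The main obstacle---and really the heart of the argument---is Haiman's flatness assertion together with the identification of the fiber as the regular representation: this is the surface-specific miracle that underlies the Bridgeland--King--Reid correspondence in this setting and fails for Hilbert schemes of higher-dimensional varieties. Beyond citing \cite{Haiman01} and \cite{Haiman99} I would not attempt to reprove it; everything else reduces to standard moduli-theoretic bookkeeping on $A^{(n+1)}$ and the properties of $A^{[n+1]}$ established by Fogarty.
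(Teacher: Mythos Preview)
Your construction of $\phi$ via the isospectral Hilbert scheme and Haiman's flatness theorem is exactly right, and this is also the forward direction the paper uses. The difference lies in how one concludes that $\phi$ is an isomorphism. The paper does not invoke Zariski's main theorem; instead it builds an explicit inverse: starting from the universal family of $\mathscr{S}_{n+1}$-clusters $\mathcal{Z}\subset \mathrm{Hilb}^{\mathscr{S}_{n+1}}(A^{n+1})\times A^{n+1}$, it projects via $\mathrm{Id}\times pr_1$ to $\mathrm{Hilb}^{\mathscr{S}_{n+1}}(A^{n+1})\times A$, observes this image is flat of length $n+1$ over the base, and obtains a classifying morphism $\mathrm{Hilb}^{\mathscr{S}_{n+1}}(A^{n+1})\to A^{[n+1]}$. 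The two maps are then checked to be mutually inverse on the dense open locus of free orbits, and since both source and target are reduced this suffices.

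Your appeal to Zariski's main theorem is where I would push back. You write that smoothness of $A^{[n+1]}$ forces $\phi$ to be an isomorphism onto its image, but ZMT in the form you want (finite birational implies isomorphism) requires normality of the \emph{target}, not the source; a priori you do not know the reduced component of $G$-Hilb is normal, and indeed that is part of what the proposition establishes. A proper bijective birational morphism from a smooth variety to a merely reduced one can fail to be an isomorphism (think of normalizing a cusp). The paper's explicit-inverse argument sidesteps this entirely, and it is also the cleanest way to patch your proof: once you have $\phi$, construct the reverse map as above and verify the two compositions agree with the identity on the free-orbit locus.
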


\begin{Rem}
In \cite{Haiman99} and \cite{Haiman01} the above result is stated for $X = \mathbb{C}^2$, but the argument goes through for $X$ a projective variety: For $\mathcal{Z}\subset \mathrm{Hilb}^{\mathscr{S}_{n+1}}(X^{n+1})\times X^{n+1}$ the universal family of $G$-clusters, the projection morphism $\mathrm{Id}\times pr_1(\mathcal{Z}): \mathrm{Hilb}^{\mathscr{S}_{n+1}}(X^{n+1})\times X^{n+1}\rightarrow \mathrm{Hilb}^{\mathscr{S}_{n+1}}(X^{n+1})\times X$ is flat and hence gives rise to a classifying morphism $\mathrm{Hilb}^{\mathscr{S}_{n+1}}(X^{n+1})\rightarrow X^{[n+1]}$. This classifying morphism is inverse to a morphism $X^{[n+1]}\rightarrow \mathrm{Hilb}^{\mathscr{S}_{n+1}}(X^{n+1})$ which arises by means of the universal property of the Hilbert scheme (see \cite[Theorem 3.1]{Haiman01}).

\end{Rem}
Recall that we let \begin{equation} K_{n+1}: = \mathrm{ker}(\Sigma: \mathbb{Z}^{n+1}\longrightarrow \mathbb{Z}),\end{equation} where $\Sigma$ is the summation map and let $W$ and $\widehat{W}$ be defined as
\begin{equation}
    W: = A\otimes K_{n+1},\hspace{3mm} \widehat{W}: = \widehat{A}\otimes K_{n+1}.
\end{equation}
Note the slight abuse of notation for $\widehat{W}$; indeed, there is an isomorphism of $\widehat{A}\otimes K_{n+1}\cong \widehat{A}\otimes \widehat{K}_{n+1}$ as Abelian varieties, yet they are not isomorphic as $\mathbb{Z}[\mathscr{S}_{n+1}]$-modules. 

Define $\mathrm{Hilb}^{\mathscr{S}_{n+1}}(W)$ analogously to $\mathrm{Hilb}^{\mathscr{S}_{n+1}}(A^{n+1})$, where we naturally identify $W\cong A^n$ via the identification of $K_{n+1}$ with $\mathbb{Z}^n$ given by $(a_1, \dots , a_n)\mapsto (a_1,\dots, a_n, -\sum_i a_i)$. That is, we identify $\mathrm{Hilb}^{\mathscr{S}_{n+1}}(W)$ with the connected component of free orbits of the $\mathscr{S}_{n+1}$-action on the factors of $W$. Note that the isomorphism of Proposition \ref{Hilbert scheme isomorphic to moduli space of clusters} implies the existence of a crepant resolution
\[
\mathrm{Hilb}^{\mathscr{S}_{n+1}}(A^{n+1})\longrightarrow A^{(n+1)},
\] 
the fiber of which over zero is $\mathrm{Hilb}^{\mathscr{S}_{n+1}}(W)\cong Kum_n(A)$, and which is well known to be semi-small. Therefore we have the following:

\begin{Th}[{\cite[Theorem 1.1]{BKR}}]

    The derived McKay correspondence yields isomorphisms
    \[
    D^b(Kum_n(A))\cong D^b(\mathrm{Hilb}^{\mathscr{S}_{n+1}}(W))\cong D^b_{\mathscr{S}_{n+1}}(W).
    \]
\end{Th}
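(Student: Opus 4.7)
The plan is to split the statement into two isomorphisms of derived categories and handle them in turn. The first isomorphism $D^b(Kum_n(A)) \cong D^b(\mathrm{Hilb}^{\mathscr{S}_{n+1}}(W))$ is tautological once the underlying varieties are identified, while the second isomorphism $D^b(\mathrm{Hilb}^{\mathscr{S}_{n+1}}(W)) \cong D^b_{\mathscr{S}_{n+1}}(W)$ is a direct application of the Bridgeland--King--Reid derived McKay correspondence.

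For the first isomorphism I would invoke Proposition \ref{Hilbert scheme isomorphic to moduli space of clusters}, which identifies $A^{[n+1]}$ with $\mathrm{Hilb}^{\mathscr{S}_{n+1}}(A^{n+1})$ compatibly with the Hilbert--Chow morphism to $A^{(n+1)}$. Composing with the summation map $A^{(n+1)}\to A$ and restricting to the fiber over $0\in A$ yields, on the one side, $Kum_n(A)$ by definition, and on the other side the component of $\mathscr{S}_{n+1}$-clusters supported on $W = \ker(\Sigma: A^{n+1}\to A)$, that is, $\mathrm{Hilb}^{\mathscr{S}_{n+1}}(W)$. This produces a biregular isomorphism of smooth projective varieties, and hence of their bounded derived categories.

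For the second isomorphism I would apply \cite[Theorem 1.1]{BKR} to the $\mathscr{S}_{n+1}$-action on the smooth quasi-projective variety $W$. The hypotheses to verify are that $W$ is smooth (immediate, as $W$ is an abelian subvariety of $A^{n+1}$) and that $\mathrm{Hilb}^{\mathscr{S}_{n+1}}(W)\to W/\mathscr{S}_{n+1}$ is a crepant resolution satisfying BKR's fiber-product dimensional bound, equivalently that the resolution is semi-small. Crepancy follows because $W$ inherits a holomorphic symplectic form from the product symplectic form on $A^{n+1}$, and semi-smallness is inherited from the semi-smallness of the ambient resolution $\mathrm{Hilb}^{\mathscr{S}_{n+1}}(A^{n+1})\to A^{(n+1)}$ recalled in the paragraph preceding the theorem: indeed, the summation map $A^{(n+1)}\to A$ is a translation-equivariant fibration with fiber $W/\mathscr{S}_{n+1}$, so the stratification of $A^{(n+1)}$ by stabilizer type restricts to a stratification of $W/\mathscr{S}_{n+1}$ with unchanged relative codimensions.

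The main obstacle is largely illusory: the substantive content is packaged into Haiman's theorem and into BKR itself, so the statement is essentially an exercise in matching the hypotheses to our geometric setup. The only step deserving genuine care is verifying that the BKR dimensional inequality on the universal family survives restriction from $A^{n+1}$ to $W$; I would confirm this by the translation-equivariance argument sketched above, which ensures that the strata of $A^{(n+1)}$ meet $W/\mathscr{S}_{n+1}$ with the expected codimensions so that the semi-smallness inequalities carry over to the Kummer setting.
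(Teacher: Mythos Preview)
Your proposal is correct and follows essentially the same line as the paper. The paper's justification is the paragraph immediately preceding the theorem: it observes that $\mathrm{Hilb}^{\mathscr{S}_{n+1}}(A^{n+1})\to A^{(n+1)}$ is a crepant resolution whose fiber over $0$ is $\mathrm{Hilb}^{\mathscr{S}_{n+1}}(W)\cong Kum_n(A)$ and that the resolution is well known to be semi-small, then invokes \cite[Theorem 1.1]{BKR}; your only addition is spelling out the translation-equivariance reason why semi-smallness survives restriction to the fiber, which the paper leaves implicit.
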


Consider the following universal subschemes
\begin{center}
    \begin{tikzcd}
             & \mathcal{Z} \arrow[ld] \arrow[rd] &              &                     & \mathcal{\widehat{Z}} \arrow[ld] \arrow[rd] &                     \\
W &                                   & Kum_{n}(A) & \widehat{W} &                                          & Kum_{n}(\widehat{A})
\end{tikzcd}
\end{center}
of $\mathscr{S}_{n+1}$-clusters on $W$ and $\widehat{W}$. We define Haiman's isospectral Hilbert scheme $\mathcal{Z}\times \mathcal{\widehat{Z}}$ as the fiber product

\begin{center}
    \begin{tikzcd}
\mathcal{Z}\times \widehat{\mathcal{Z}} \arrow[d, "p_1\times p_2"] \arrow[rr, "q_1\times q_2"] &  & \mathrm{Kum}_{n}(A)\times \mathrm{Kum}_{n}(\widehat{A}) \arrow[d] \\
W\times \widehat{W} \arrow[rr]                                                                                &  & \mathrm{Sym}^{n+1}(W)\times \mathrm{Sym}^{n+1}(\widehat{W})       
\end{tikzcd}
\end{center}

When taking products, the derived McKay correspondence yields a derived equivalence by (\cite[Theorem 1.1]{BKR}):
\begin{Prop}\label{BKR equivalence of categories}
There is an equivalence of categories \[D^b_{\mathscr{S}_{n+1}\times \mathscr{S}_{n+1}}(W\times \widehat{W})\xrightarrow{\sim} D^b(Kum_{n}(A)\times Kum_{n}(\widehat{A})).\]
\end{Prop}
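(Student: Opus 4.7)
The plan is to reduce to two parallel applications of the single-factor BKR equivalence. By \cite[Theorem 1.1]{BKR} applied to the $\mathscr{S}_{n+1}$-action on $W$, we obtain an equivalence $\Phi := \mathrm{FM}_{\mathcal{O}_{\mathcal{Z}}}\colon D^b_{\mathscr{S}_{n+1}}(W)\xrightarrow{\sim} D^b(Kum_n(A))$ realized as the Fourier--Mukai transform with kernel the structure sheaf of the universal $\mathscr{S}_{n+1}$-cluster $\mathcal{Z}\subset W\times Kum_n(A)$. The same theorem applied to $\widehat{W}$ yields the analogous equivalence $\widehat{\Phi}\colon D^b_{\mathscr{S}_{n+1}}(\widehat{W})\xrightarrow{\sim} D^b(Kum_n(\widehat{A}))$ with kernel $\mathcal{O}_{\widehat{\mathcal{Z}}}$.

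Next, I would form the external tensor product of kernels
\[
\mathcal{K} := \mathcal{O}_{\mathcal{Z}}\boxtimes \mathcal{O}_{\widehat{\mathcal{Z}}} \cong \mathcal{O}_{\mathcal{Z}\times \widehat{\mathcal{Z}}}
\]
on $(W\times \widehat{W})\times (Kum_n(A)\times Kum_n(\widehat{A}))$. The $\mathscr{S}_{n+1}$-invariance of $\mathcal{Z}$ and of $\widehat{\mathcal{Z}}$ separately endows $\mathcal{K}$ with a canonical $(\mathscr{S}_{n+1}\times \mathscr{S}_{n+1})$-equivariant structure on the source factor, so $\mathrm{FM}_{\mathcal{K}}$ descends to a functor $D^b_{\mathscr{S}_{n+1}\times \mathscr{S}_{n+1}}(W\times \widehat{W}) \to D^b(Kum_n(A)\times Kum_n(\widehat{A}))$. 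Since kernel convolution commutes with external tensor product, the composite sends a split object $\mathcal{E}\boxtimes \mathcal{F}$ to $\Phi(\mathcal{E})\boxtimes \widehat{\Phi}(\mathcal{F})$, and hence is fully faithful on such objects via the projection formula and the K\"{u}nneth formula for Ext-groups.

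To conclude, I would invoke a K\"{u}nneth-type generation argument: split objects generate $D^b_{\mathscr{S}_{n+1}\times \mathscr{S}_{n+1}}(W\times \widehat{W})$ and $D^b(Kum_n(A)\times Kum_n(\widehat{A}))$ as triangulated categories, so fully faithfulness and essential surjectivity on split objects extend to the whole category. An alternative, more direct route is to apply \cite[Theorem 1.1]{BKR} to the $(\mathscr{S}_{n+1}\times \mathscr{S}_{n+1})$-action on $W\times \widehat{W}$ and its crepant resolution $Kum_n(A)\times Kum_n(\widehat{A})$; the fiber-product dimension bound required by BKR follows from the corresponding bound for each single factor by an additive dimension count, since the resolutions $\mathrm{Hilb}^{\mathscr{S}_{n+1}}(W)\to W/\mathscr{S}_{n+1}$ and $\mathrm{Hilb}^{\mathscr{S}_{n+1}}(\widehat{W})\to \widehat{W}/\mathscr{S}_{n+1}$ are each semi-small. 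The main obstacle is the equivariant bookkeeping in the K\"{u}nneth step, or, equivalently, verifying that the product BKR hypothesis follows cleanly from the single-factor hypotheses; I expect this to be straightforward but pedantic.
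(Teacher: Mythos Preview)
Your alternative route---applying \cite[Theorem 1.1]{BKR} directly to the product action---is exactly the paper's approach; the paper's proof simply notes that $Kum_n(A)\times Kum_n(\widehat{A})$ is a crepant resolution of $\mathrm{Sym}^{n+1}(W)\times\mathrm{Sym}^{n+1}(\widehat{W})$ and that $\omega_{W\times\widehat{W}}\cong\omega_W\boxtimes\omega_{\widehat{W}}$ is locally trivial as a $(G\times G)$-sheaf, then invokes BKR. Your remark that the fiber-product dimension bound follows additively from single-factor semi-smallness actually fills in a step the paper leaves implicit. Your primary K\"unneth-via-box-product approach is a valid alternative the paper does not pursue; it yields the equivalence explicitly as $\Phi\boxtimes\widehat{\Phi}$, at the cost of the equivariant K\"unneth bookkeeping you acknowledge.
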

\begin{proof}
It follows immediately that $ Kum_n(A)\times Kum_n(\widehat{A})$ is a crepant resolution $X: = \mathrm{Sym}^{n+1}(W)\times \mathrm{Sym}^{n+1}(\widehat{W})$. The canonical sheaf $\omega_{W\times \widehat{W}}\cong \omega_{W}\boxtimes\omega_{\widehat{W}}$ is locally trivial as a $(G\times G)$-sheaf, and so the assumptions of \cite[Theorem 1.1]{BKR} are satisfied. 
\end{proof}

\begin{Rem}
Note that we use the convention where the Fourier-Mukai kernel associated to the derived McKay correspondence is $\mathcal{O}_{\mathcal{Z}\times \mathcal{\widehat{Z}}}$, as in \cite{Krug18}. So, for an object $\mathcal{G}\in D^b_{G_{\Delta}}(X\times \widehat{X})$, define 
\begin{equation}\label{BKR Magni}
\mathcal{G}^{K}: = (q_1\times q_2)^{G_{\Delta}}_*[(p_1\times p_2)^*(\mathcal{F}_0)\otimes \mathcal{O}_{\mathcal{Z}\times \mathcal{\widehat{Z}}}],\end{equation}
where $p_1\times p_2: \mathcal{Z}\times \mathcal{\widehat{Z}}\rightarrow X$ and $q_1\times q_2: \mathcal{Z}\times \mathcal{\widehat{Z}}\rightarrow Kum_{n}(A)\times Kum_{n}(\widehat{A})$ are projection maps. Here, $(q_1\times q_2)^{G_{\Delta}}_*$ denotes the pushforward invariant under the diagonal action. The fiber of $\mathcal{G}^{K}$ at a subscheme $Z\in Kum_{n}(A)\times Kum_{n}(\widehat{A})$ is $H^0(Kum_n(A)\times Kum_n(\widehat{A}), \mathcal{G}|_{Z})$. 
\end{Rem}

\subsection{Derived equivalences of Kummer-type IHSMs}
From now on, we will let $G: = \mathscr{S}_{n+1}$ and $G_{\Delta}\subset G\times G$ be 
the diagonal subgroup consisting of elements $(\sigma, \sigma)$, for $\sigma\in G$.
Let us provide an outline of Magni's strategy in \cite{Magni} for constructing derived equivalences of generalized Kummer varieties. The following is the main theorem that will be reached by the end of this subsection. We also remark that the results of \cite{Magni} are part of a more general construction described in \cite{YuxuanPaper}.

\begin{Th}[\cite{Magni}]\label{Magni derived equivalence}
    Let $n+1\geq 3$ be odd. There exist Abelian surfaces $A$ and $\widehat{A}$ and a $G$-invariant simple semihomogeneous vector bundle $\mathcal{F}$ on $W\times \widehat{W}$ such that $G^2\cdot \mathcal{F} : = \mathrm{inf}^{G^2}_{G_{\Delta}}(\mathcal{F})\rightarrow W\times \widehat{W}$ is a $G^2$-equivariant Fourier-Mukai kernel of a derived equivalence 
    \[
    \Phi_{G^2\cdot \mathcal{F}}: D^b_{G}(W)\xrightarrow{\sim} D^b_{G}(\widehat{W}).
    \]
    Via BKR conjugation, the kernel yields a derived equivalence

    \[ \Phi_{\mathcal{F}^K}:D^b(Kum_n(A))\xrightarrow{\sim} D^b(Kum_n(\widehat{A})),
    \]
    where $\mathcal{F}^K\in D^b(Kum_n(A)\times Kum_n(\widehat{A}))$ is the image of $\mathcal{F}^G$ under the equivalence of categories given in Proposition \ref{BKR equivalence of categories}.
\end{Th}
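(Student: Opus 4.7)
The plan is to construct $\mathcal{F}$ explicitly starting from the element $g\in \mathrm{Sp}(W,\widehat{W})^{G}$ that has been written down above, and then transport the resulting Fourier-Mukai equivalence through the layers: first Orlov-Polishchuk's correspondence between symplectomorphisms and derived equivalences of abelian varieties, next the inflation construction in Proposition \ref{inflation prop}, and finally the BKR equivalence of Proposition \ref{BKR equivalence of categories}. By Theorem \ref{Magni main theorems}(ii), the set $\mathrm{Sp}(W,\widehat{W})^{G}$ is a right torsor under $\mathrm{Sp}(W)^{G}$, and together with part (i) one extracts a line bundle $\mathcal{L}\in \mathrm{Pic}(W\times \widehat{W})$ with $\phi_{\mathcal{L}}=(n+1)g$. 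Setting $\mathcal{F}:=[n+1]_{*}\mathcal{L}^{\otimes (n+1)}$, Mukai's criterion for semihomogeneity (pushforward of a line bundle by an isogeny) yields that $\mathcal{F}$ is a simple semihomogeneous vector bundle on the abelian variety $W\times\widehat{W}$ whose characteristic isogeny matches $\phi_{\mathcal{L}}$.

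The next step is to verify that $\mathcal{F}$ is $G_{\Delta}$-invariant and extract a $G_{\Delta}$-linearization. The key input is that $g$ was chosen explicitly in the $\mathscr{S}_{n+1}$-invariant part, so $\sigma^{*}g=g$ for all $\sigma\in G$; thus $\sigma^{*}\mathcal{L}\cong \mathcal{L}\otimes \mathcal{P}_{\alpha_{\sigma}}$ for some $\alpha_{\sigma}\in \widehat{W\times \widehat{W}}$, and after adjusting $\mathcal{L}$ by a translation-invariant line bundle (which does not affect $\phi_{\mathcal{L}}$) we can arrange $\sigma^{*}\mathcal{L}\cong \mathcal{L}$. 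A $G_{\Delta}$-linearization then exists provided the Schur-multiplier obstruction in $H^{2}(G,\mathbb{C}^{*})$ vanishes; since $n\geq 2$ we have $Z(\mathscr{S}_{n+1})=0$, and the oddness of $n+1$ combined with $\gcd(n+1,\mathrm{exp}(\lambda))=1$ ensure that the cocycle obstruction for $\mathcal{F}=[n+1]_{*}\mathcal{L}^{\otimes(n+1)}$ is trivial (the $(n+1)$-th tensor power is exactly what kills the relevant torsion).

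With $(\mathcal{F},\lambda)\in D^{b}_{G_{\Delta}}(W\times \widehat{W})$ in hand, I would prove that the Fourier-Mukai kernel $\mathcal{F}$ induces a derived equivalence $\Phi_{\mathcal{F}}:D^{b}(W)\xrightarrow{\sim} D^{b}(\widehat{W})$ by checking Mukai's fully-faithfulness criterion via the semihomogeneous orthogonality of Proposition \ref{isomorphic orthogonal}: the isogeny $(n+1)g$ is an isomorphism of abelian varieties because $g\in \mathrm{Sp}(W,\widehat{W})^{G}$ is a symplectomorphism, so the associated skyscraper kernels $\Phi_{\mathcal{F}}(\mathcal{O}_{x})$ are simple semihomogeneous bundles of the same slope as $x$ varies, hence pairwise orthogonal or isomorphic. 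Consequently $(\mathcal{F},\lambda)\in \mathrm{Eq}_{G_{\Delta}}(D^{b}(W),D^{b}(\widehat{W}))$, and applying the inflation map $\mathrm{inf}^{G^{2}}_{G_{\Delta}}$ of (\ref{inflation map}) produces $G^{2}\cdot \mathcal{F}\in \mathrm{Eq}(D^{b}_{G}(W),D^{b}_{G}(\widehat{W}))$ by Proposition \ref{inflation prop}.

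Finally, I would transport this equivariant derived equivalence to the Kummer varieties via BKR. Applying the functor (\ref{BKR Magni}) to $G^{2}\cdot \mathcal{F}$ yields an object $\mathcal{F}^{K}\in D^{b}(Kum_{n}(A)\times Kum_{n}(\widehat{A}))$, and Proposition \ref{BKR equivalence of categories} guarantees that the Fourier-Mukai functor with kernel $\mathcal{F}^{K}$ is a derived equivalence between $D^{b}(Kum_{n}(A))$ and $D^{b}(Kum_{n}(\widehat{A}))$. The main obstacle I expect is the bookkeeping at the level of linearizations: verifying that the $G_{\Delta}$-linearization on $\mathcal{F}$ lifts coherently through the inflation, since this requires a careful analysis of how $G$ permutes the summands of $G^{2}\cdot \mathcal{F}$ and how the resulting $(G\times G)$-structure interacts with the diagonal action used in the BKR pushforward. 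The numerical hypotheses $n+1$ odd and $\gcd(n+1,\mathrm{exp}(\lambda))=1$ are precisely what is needed for this bookkeeping to be tractable; without them, non-trivial Schur multiplier obstructions would prevent the inflation from landing in the equivalences.
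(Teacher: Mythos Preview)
Your overall architecture (Orlov--Polishchuk, then inflation, then BKR) matches the paper, but two steps contain genuine gaps.

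First, the bundle $[n+1]_{*}\mathcal{L}^{\otimes(n+1)}$ is \emph{not} simple in general; pushforward along an isogeny yields a semihomogeneous bundle, but typically a non-simple one. The paper (following Magni) instead applies the Jordan--H\"older decomposition (Proposition \ref{bundle decomposition}): the pushforward splits as $\bigoplus_{\alpha_j}(\mathcal{F}_0\otimes \mathcal{P}_{\alpha_j})^r$ with each $\mathcal{F}_0\otimes \mathcal{P}_{\alpha_j}$ simple semihomogeneous, and one must then argue that at least one such summand is $G_{\Delta}$-invariant. Your attempt to arrange $\sigma^{*}\mathcal{L}\cong\mathcal{L}$ by twisting by a degree-zero line bundle does not address this, since the issue is not $G$-invariance of $\mathcal{L}$ but rather how the $G$-action permutes the simple summands of the pushforward.

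Second, your argument for the $G_{\Delta}$-linearization conflates two unrelated facts. The vanishing $Z(\mathscr{S}_{n+1})=0$ concerns injectivity of the inflation map, not the Schur-multiplier obstruction; and $H^{2}(\mathscr{S}_{n+1},\mathbb{C}^{*})=\mathbb{Z}/2$ is \emph{nonzero} for $n+1\geq 4$, so the obstruction class is not automatically trivial. The oddness of $n+1$ and the coprimality condition do not kill this $\mathbb{Z}/2$. What the paper actually invokes is Proposition \ref{Conditions for equivariance}: one checks that $\mathcal{F}_0$ is simple and $G$-invariant, that some power $\mathcal{F}_0^{r}$ is $G$-equivariant (because it sits inside the $G$-equivariant pushforward), and then that every $r$-dimensional projective representation of $\mathscr{S}_{n+1}$ lifts to a linear one---a dimension-specific statement about the surjectivity of $\mathrm{Hom}(\mathscr{S}_{n+1},\mathrm{GL}(r,\mathbb{C}))\to \mathrm{Hom}(\mathscr{S}_{n+1},\mathrm{PGL}(r,\mathbb{C}))$ for this particular $r$, not a blanket vanishing of the Schur multiplier.
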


\begin{Def}\label{unitary bundle}
    A vector bundle $\mathcal{U}$ is \textit{unipotent} if it admits a filtration \[
    0 = \mathcal{U}_0\subset \mathcal{U}_1\subset \cdots \subset \mathcal{U}_m = \mathcal{U},
    \] such that for each $i$ from $1$ to $m$, $\mathcal{U}_i/\mathcal{U}_{i-1}\cong \mathcal{O}_X$.
\end{Def}
A semihomogeneous  vector bundle $\mathcal{G}$ on an Abelian variety admits a decomposition via the Jordan-H\"{o}lder filtration as 
\begin{equation}\label{Jordan-Holder filtration}
    \mathcal{G}\cong \bigoplus_j \mathcal{F}_j\otimes \mathcal{U}_j,
\end{equation}
where each $\mathcal{F}_j$ is a simple semihomogeneous vector bundle and each $\mathcal{U}_j$ is unipotent, by \cite[Proposition 6.18]{Mukai1}. For ease of notation, define 
\begin{equation}
    X:= W\times \widehat{W}.
\end{equation}
We consider the Poincar\'{e} bundle $\mathcal{P}\rightarrow X\times \widehat{X}$.

In order to rewrite (\ref{Jordan-Holder filtration}) in a manner that is computationally accessible, we must study the action of $(-)\otimes \mathcal{P}_{\alpha}$ for $\alpha\in \widehat{X}[n]$ on the bundle $\mathcal{G}$. By the property of semihomogeneous vector bundles that any such bundle $\mathcal{F}_j$ can be represented as a bundle $\mathcal{F}_0\otimes \mathcal{P}_{\alpha_j}$ (see \cite[Proposition 7.11]{Mukai1}), the action of $\widehat{X}[n]$ on each summand $\mathcal{F}_j$ has stabilizer $\{\alpha\in \widehat{X}[n]: \mathcal{F}_0\cong \mathcal{F}_0\otimes \mathcal{P}_{\alpha_j}\}$. This is indeed, the homogeneous subset of $\widehat{X}$, i.e. the subset $\Sigma_{\mu}\subset \widehat{X}[n]$ from (\ref{semihomogeneous kernel of projection map}). Magni proves the following: 
\begin{Prop}[{\cite[Proposition 7.14, Proposition 7.15, Proposition 7.21]{Magni}}]\label{bundle decomposition}
    The Jordan-H\"{o}lder filtration in (\ref{Jordan-Holder filtration}) admits a split unipotent part, i.e. for each summand $\mathcal{U}_j\cong \mathcal{O}_{X}^{r_j}$. Furthermore, the resulting split decomposition 
    \begin{equation}\label{split Jordan-Holder}
    \mathcal{G}\cong \bigoplus_{\alpha_j\in \widehat{X}[n]/\Sigma_{\mu}} (\mathcal{F}_0\otimes \mathcal{P}_{\alpha_j})^r
    \end{equation}
    contains at least one $G$-invariant summand. 
\end{Prop}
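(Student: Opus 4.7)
The plan is to exploit the explicit construction of the bundle $\mathcal{G}$ as a pushforward of a line bundle along an isogeny and then track this structure through the Jordan--Hölder filtration. Since any simple semihomogeneous bundle on an Abelian variety can be realized as $\pi_{*}\mathcal{M}$ for an isogeny $\pi$ and line bundle $\mathcal{M}$, and the relevant bundle here arises as $[n+1]_{*}\mathcal{L}^{\otimes(n+1)}$ (or a close variant) on $X=W\times\widehat{W}$, I would first pull back along $[n+1]\colon X\to X$ and apply the projection formula to obtain
$$[n+1]^{*}\mathcal{G}\;\cong\;\bigoplus_{t\in X[n+1]}t^{*}\mathcal{L}^{\otimes(n+1)}.$$
Upstairs this is manifestly a direct sum of line bundles, so there is no unipotent obstruction. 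The action of the Galois group $X[n+1]$ on this direct sum is linearizable, and since $X[n+1]$ is finite Abelian and we work in characteristic zero, descent decomposes the sum into character isotypic components. Each isotypic component yields a simple semihomogeneous summand on $X$ with some multiplicity, and grouping these by orbits under the homogeneous stabilizer $\Sigma_{\mu}$ (from equation (\ref{semihomogeneous kernel of projection map})) gives the asserted split decomposition with common multiplicity $r$ dictated by $|\Sigma_{\mu}|$ and $\deg[n+1]$.

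For the existence of a $G$-invariant summand I would use that the line bundle $\mathcal{L}$ is manufactured from the $\mathscr{S}_{n+1}$-invariant symplectomorphism $g\in\mathrm{Sp}(W,\widehat{W})^{\mathscr{S}_{n+1}}$, so by Theorem \ref{Magni main theorems}(ii) the bundle $\mathcal{L}$, and therefore its pushforward $\mathcal{G}$, is $G$-invariant up to isomorphism. Consequently the permutation action of $G$ on the index set $\widehat{X}[n]/\Sigma_{\mu}$ in the split decomposition preserves isomorphism classes. The class of the origin $0\in\widehat{X}[n]$ is tautologically $G$-fixed; its corresponding summand is $\mathcal{F}_{0}\otimes\mathcal{P}_{0}\cong\mathcal{F}_{0}$. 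To upgrade this to a genuinely $G$-invariant summand (rather than merely a $G$-invariant isomorphism class) I would invoke the Schur multiplier framework sketched after Definition \ref{equivariant derived category}: since $Z(\mathscr{S}_{n+1})=0$ for $n\geq 2$ and $H^{2}(\mathscr{S}_{n+1},\mathbb{C}^{*})$ contains no obstruction to linearizing the simple summand $\mathcal{F}_{0}$ (its center acts trivially by Schur's lemma), a $G$-equivariant representative exists.

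The main obstacle is the split-unipotent assertion: for an arbitrary semihomogeneous bundle the unipotent factors $\mathcal{U}_{j}$ in (\ref{Jordan-Holder filtration}) need not be trivial, since $\mathrm{Ext}^{1}$ between a simple semihomogeneous bundle and itself generically does not vanish. Handling this requires the very specific isogeny-pushforward origin of $\mathcal{G}$, together with a Fourier--Mukai computation identifying $\mathcal{G}$ with its semisimplification. Concretely, one would verify that the Fourier--Mukai transform of $\mathcal{G}$ is a skyscraper sheaf supported on a single $\Phi_{\mu}$-orbit with reduced scheme structure; the reducedness is what rules out the appearance of nontrivial unipotents and forces the direct-sum form. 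A secondary delicate point is ensuring the $G$-action preserves the $\Sigma_{\mu}$-orbit through $0$ rather than shifting it by a character; this ultimately relies on the compatibility of $\phi_{\mathcal{L}}=(n+1)g$ with the $\mathscr{S}_{n+1}$-invariance of $g$ established before the statement of the proposition.
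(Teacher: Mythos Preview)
The paper does not prove this proposition; it is stated with a citation to Magni [Propositions 7.14, 7.15, 7.21] and no argument is given. Your sketch for the split-unipotent part is plausible and is in the spirit of Magni's approach: the essential input is indeed that $\mathcal{G}$ arises as a pushforward of a line bundle along multiplication by $n+1$, and the Fourier--Mukai picture (reduced support on a single $\Phi_{\mu}$-orbit) is the correct mechanism for excluding nontrivial unipotent factors.

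There is, however, a genuine gap in your argument for the existence of a $G$-invariant summand. The assertion that ``the class of the origin $0\in\widehat{X}[n]$ is tautologically $G$-fixed'' conflates two different $G$-actions on the index set $\widehat{X}[n]/\Sigma_{\mu}$. The action induced by permuting the summands of $\mathcal{G}$ is not the linear action of $\mathscr{S}_{n+1}$ on $\widehat{X}$; it is twisted by the $1$-cocycle $\sigma\mapsto\beta(\sigma)$ determined by $\sigma^{*}\mathcal{F}_{0}\cong\mathcal{F}_{0}\otimes\mathcal{P}_{\beta(\sigma)}$. The origin is fixed for the untwisted action, but whether it is fixed for the twisted one is precisely the content of the claim. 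You flag this at the end as a ``secondary delicate point,'' yet it is the heart of the matter. In Magni's argument this step is handled by a direct analysis of the $G$-action on the torsion quotient using the arithmetic hypothesis $\gcd(n+1,\exp(\lambda))=1$, not by an abstract fixed-point principle.

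The Schur multiplier paragraph is both misplaced and incorrect. The proposition asserts only $G$-\emph{invariance} of a summand (i.e.\ $\sigma^{*}\mathcal{F}_{0}\cong\mathcal{F}_{0}$), not $G$-equivariance, so no linearization obstruction is relevant here. Moreover $H^{2}(\mathscr{S}_{n+1},\mathbb{C}^{*})\cong\mathbb{Z}/2$ for $n+1\geq 4$, so the obstruction group is not trivial in general. The passage from invariance to equivariance is treated separately in the paper via Proposition~\ref{Conditions for equivariance}, using a distinct argument about projective representations of $\mathscr{S}_{n+1}$.
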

\begin{Rem}
    For $\mathcal{F}$ the simple semihomogeneous vector bundle in Theorem \ref{Magni derived equivalence}, note that by definition, $$\mathrm{inf}_{G_{\Delta}}^{G^2}(\mathcal{F}) = \bigoplus_{g\in G_{\Delta} \backslash G^2}g^*\mathcal{F}$$ is a direct sum of simple semihomogeneous vector bundles, each of whose slope is equal to that of $\mathcal{G}$.  
\end{Rem}

Magni therefore proves that there exists a simple semihomogeneous vector bundle $\mathcal{F}_0$ on $X$ which provides a derived equivalence between the Abelian varieties $W$ and $\widehat{W}$. The next step is to prove this derived equivalence carries a $G_{\Delta}$-linearization and therefore can be enhanced to an equivalence of equivariant derived categories by way of Proposition \ref{inflation prop}. This requires that we have control over the projective representations of $G$. We have the following key proposition: 
\begin{Prop}[{\cite[Proposition 7.25]{Magni}}]\label{Conditions for equivariance}
Let $H$ be a finite group acting on a projective variety $X$ and let $\mathcal{E}\rightarrow X$ be a sheaf. Assume:
\begin{enumerate}
    \item $\mathcal{E}$ is simple;
    \item $\mathcal{E}$ is $H$-invariant;
    \item There exists $r\in \mathbb{N}$ such that $\mathcal{E}^r$ is $H$-equivariant;
    \item Every $r$-dimensional projective representation of $H$ is isomorphic to a linear representation of $H$.
Then, $\mathcal{E}$ is $H$-equivariant. 
    
\end{enumerate}
    
\end{Prop}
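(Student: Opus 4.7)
The plan is to identify the obstruction to $H$-equivariance of $\mathcal{E}$ with a class in $H^2(H,\mathbb{C}^*)$ and then use the $H$-equivariance of $\mathcal{E}^{\oplus r}$ together with assumption (4) to show this class vanishes.

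First, using (2), for every $h \in H$ choose an isomorphism $\lambda_h \colon \mathcal{E} \xrightarrow{\sim} h^*\mathcal{E}$, normalized so that $\lambda_e = \mathrm{Id}$. For $g,h \in H$, both $\lambda_{gh}$ and $h^*\lambda_g \circ \lambda_h$ are isomorphisms $\mathcal{E} \to (gh)^*\mathcal{E}$, so by (1) and Schur's lemma applied to the simple sheaf $\mathcal{E}$, there is a unique scalar $c(g,h) \in \mathbb{C}^*$ with
\[
h^*\lambda_g \circ \lambda_h \;=\; c(g,h)\,\lambda_{gh}.
\]
A short computation (associativity of composition) shows $c$ is a $2$-cocycle, and its class $[c] \in H^2(H,\mathbb{C}^*)$ is independent of the choices of $\lambda_h$ (different choices $\lambda_h' = \eta(h)\lambda_h$ alter $c$ by the coboundary of $\eta$). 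The collection $(\lambda_h)$ defines an $H$-linearization of $\mathcal{E}$ if and only if $c \equiv 1$, and more generally $\mathcal{E}$ admits some $H$-linearization if and only if $[c] = 0$.

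Next I would extract a projective representation of $H$ of dimension $r$ with cocycle $c$ from hypothesis (3). Fix an $H$-equivariant structure $\mu_h \colon \mathcal{E}^{\oplus r} \xrightarrow{\sim} h^*\mathcal{E}^{\oplus r}$ on $\mathcal{E}^{\oplus r}$, so the $\mu_h$ satisfy the strict cocycle condition $\mu_{gh} = h^*\mu_g \circ \mu_h$. Writing $\mu_h$ as an $r \times r$ matrix of morphisms $\mathcal{E} \to h^*\mathcal{E}$ and using simplicity of $\mathcal{E}$ to express each entry as a scalar multiple of $\lambda_h$, we obtain $\mu_h = \Phi_h \cdot \lambda_h$ for a unique matrix $\Phi_h \in \mathrm{GL}_r(\mathbb{C})$. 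Comparing the cocycle identity for $\mu$ with the definition of $c$ gives
\[
\Phi_{gh} \;=\; c(g,h)^{-1}\,\Phi_g\,\Phi_h,
\]
so $\Phi \colon H \to \mathrm{PGL}_r(\mathbb{C})$ is a projective representation whose associated Schur cocycle is $c^{-1}$, representing the class $-[c] \in H^2(H,\mathbb{C}^*)$.

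By hypothesis (4), $\Phi$ is projectively equivalent to a genuine linear representation: there exists $\psi \colon H \to \mathbb{C}^*$ such that $\widetilde{\Phi}_h := \psi(h)\Phi_h$ satisfies $\widetilde{\Phi}_{gh} = \widetilde{\Phi}_g \widetilde{\Phi}_h$. Substituting back yields $c(g,h) = \psi(g)\psi(h)\psi(gh)^{-1}$, so $c$ is a coboundary and $[c] = 0$. Setting $\tilde{\lambda}_h := \psi(h)^{-1}\lambda_h$ produces isomorphisms satisfying $\tilde{\lambda}_{gh} = h^*\tilde{\lambda}_g \circ \tilde{\lambda}_h$ (together with $\tilde{\lambda}_e = \mathrm{Id}$ after a further harmless rescaling), i.e.\ an $H$-linearization on $\mathcal{E}$.

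The main obstacle is the bookkeeping in the second step: identifying $\mu_h$ with the matrix $\Phi_h \cdot \lambda_h$ and tracking the cocycle defect carefully enough to show that the Schur cocycle of $\Phi$ is exactly (the inverse of) the obstruction class $[c]$ governing linearizations of $\mathcal{E}$. Once this matching of $2$-cocycles is made precise, assumption (4) delivers the vanishing of $[c]$ essentially for free.
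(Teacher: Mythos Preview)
The paper does not give its own proof of this proposition; it is quoted from \cite[Proposition~7.25]{Magni} without argument, so there is nothing to compare against directly. Your proof is correct and is the standard one: the obstruction to linearizing a simple $H$-invariant sheaf is the Schur class $[c]\in H^2(H,\mathbb{C}^*)$, the given $H$-linearization $\mu$ on $\mathcal{E}^{\oplus r}$ produces, via the identification $\mathrm{Hom}(\mathcal{E},h^*\mathcal{E})=\mathbb{C}\cdot\lambda_h$, an $r$-dimensional projective representation $\Phi$ of $H$ whose Schur cocycle represents $\pm[c]$, and hypothesis~(4) then forces $[c]=0$. One minor bookkeeping point: with your conventions the cocycle relation actually comes out as $\Phi_{gh}=c(g,h)\,\Phi_g\Phi_h$ rather than with $c(g,h)^{-1}$ (just expand $\mu_{gh}=h^*\mu_g\circ\mu_h$ entrywise), but this is immaterial to the argument since $[c]=0$ if and only if $[c^{-1}]=0$.
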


\begin{Prop}[{\cite[Theorem 6.11, Theorem 6.18, Theorem 7.27]{Magni}}]
    The $G$-equivariant Fourier-Mukai kernel $\mathcal{F}^G$ can be inflated to yield a $G\times G$-equivariant sheaf $G^2\cdot \mathcal{F}\in D^b_{G\times G}(W\times \widehat{W})$ which provides an equivalence of categories 
    \[
    \Phi_{G^2\cdot \mathcal{F}}: D^b_G(W)\longrightarrow D^b_G(\widehat{W}).
    \]
\end{Prop}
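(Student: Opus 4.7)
The plan is to assemble three ingredients already developed in the paper. First, I would argue that the non-equivariant Fourier--Mukai kernel $\mathcal{F}$ already induces a derived equivalence $D^b(W)\xrightarrow{\sim} D^b(\widehat{W})$: the element
\[
g\in \mathrm{Sp}(W,\widehat{W})^{\mathscr{S}_{n+1}}
\]
constructed explicitly at the end of Section \ref{semihomogeneous section} is a symplectic isomorphism, so by Theorem \ref{Magni main theorems}(iii) together with Orlov's description of derived autoequivalences of Abelian varieties, the associated kernel $\mathcal{F}=[n+1]_*\mathcal{L}^{\otimes(n+1)}$ gives a Fourier--Mukai equivalence.

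Second, I would upgrade $\mathcal{F}$ to a $G_\Delta$-equivariant object $\mathcal{F}^G$ by invoking Proposition \ref{Conditions for equivariance}, applied with $H=G_\Delta\cong \mathscr{S}_{n+1}$ acting diagonally on $W\times\widehat{W}$. Four items must be checked: (a) simplicity of $\mathcal{F}$, which is automatic as $\mathcal{F}$ is a simple semihomogeneous vector bundle; (b) $G_\Delta$-invariance, which follows from the $\mathscr{S}_{n+1}$-invariance of $g$ combined with Proposition \ref{isomorphic orthogonal} (forcing $\sigma^*\mathcal{F}\cong \mathcal{F}\otimes \mathcal{P}_{\alpha_\sigma}$ and then pinning down $\alpha_\sigma$ via the $G$-invariant summand guaranteed by Proposition \ref{bundle decomposition}); (c) $G_\Delta$-equivariance of some power $\mathcal{F}^{\oplus r}$, which is extracted from the split Jordan--H\"older decomposition (\ref{split Jordan-Holder}); and (d) linearizability of every $r$-dimensional projective representation of $\mathscr{S}_{n+1}$.

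The main obstacle is clearly (d). For $n+1\geq 4$ the Schur multiplier $H^2(\mathscr{S}_{n+1},\mathbb{C}^*)\cong \mathbb{Z}/2$ is nontrivial, so projective representations do not automatically lift. I would handle this by exploiting the freedom inherent in the decomposition (\ref{split Jordan-Holder}): among the summands $\mathcal{F}_0\otimes \mathcal{P}_{\alpha_j}$ one can select the simple semihomogeneous bundle $\mathcal{F}$ whose associated $2$-cocycle on $G_\Delta$ represents the trivial class, using Proposition \ref{bundle decomposition} and direct computation with the characters $\alpha_\sigma$. Once $\mathcal{F}^G$ is produced, the third and final step is a direct application of Proposition \ref{inflation prop}: the inflation map sends $\mathcal{F}^G$ to the kernel
\[
G^2\cdot \mathcal{F} \;=\; \bigoplus_{[h]\in G_\Delta\backslash (G\times G)} h^*\mathcal{F}^G,
\]
and, because the inflation map lands in $\mathrm{Eq}(D^b_G(W),D^b_G(\widehat{W}))$, the associated Fourier--Mukai transform $\Phi_{G^2\cdot \mathcal{F}}$ is automatically an equivalence of equivariant derived categories.
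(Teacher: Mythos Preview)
Your overall strategy—verify that $\mathcal{F}$ is a non-equivariant equivalence, check the hypotheses of Proposition \ref{Conditions for equivariance} to upgrade to $\mathcal{F}^G\in D^b_{G_\Delta}(W\times\widehat{W})$, then apply the inflation map of Proposition \ref{inflation prop}—matches the paper's approach.

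The gap is in your handling of condition (d). You correctly note that $H^2(\mathscr{S}_{n+1},\mathbb{C}^*)\cong\mathbb{Z}/2$ for $n+1\geq 4$, but your proposed fix—choosing among the $G$-invariant summands of (\ref{split Jordan-Holder}) one whose obstruction cocycle happens to be trivial—is not justified. Proposition \ref{bundle decomposition} only asserts the existence of a $G$-invariant summand; it says nothing about the obstruction class, and there is no evident mechanism by which varying the choice of summand (i.e.\ twisting by some $\mathcal{P}_\alpha$) alters that class, so your ``direct computation with the characters $\alpha_\sigma$'' is doing unspecified work. The paper takes a different route: it invokes \cite[Proposition 7.26]{Magni}, which establishes that for the specific multiplicity $r$ arising in (\ref{split Jordan-Holder}) the map
\[
\mathrm{Hom}(\mathscr{S}_{n+1},\mathrm{GL}(r,\mathbb{C}))\longrightarrow\mathrm{Hom}(\mathscr{S}_{n+1},\mathrm{PGL}(r,\mathbb{C}))
\]
is surjective. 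Thus condition (4) of Proposition \ref{Conditions for equivariance} holds directly, and \emph{any} $G$-invariant simple summand carries a linearization—no selection among summands is needed. This representation-theoretic input is the piece your argument is missing.
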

\begin{proof}

Proposition \ref{Conditions for equivariance} implies that the sheaf $\mathcal{F}^G\rightarrow W\times \hat{W}$ is $G_{\Delta}\cong G$-equivariant and hence 
\[
\mathcal{F}^G\in D^b_{G_{\Delta}}(W\times \widehat{W}).
\] Every projective representation of $G$ is already linear. This follows from the surjectivity of the map 
    \[
    \mathrm{Hom}(\mathscr{S}_{n+1},\, \mathrm{GL}(r, \mathbb{C}))\rightarrow \mathrm{Hom}(\mathscr{S}_{n+1},\, \mathrm{PGL}(r, \mathbb{C})),
    \]
    as worked out in \cite[Proposition 7.26]{Magni}. The class in $H^2(G, \mathbb{C}^*)$ that obstructs the existence of a $G$-linearization of $\mathcal{F}_0$ therefore vanishes and hence we can enhance the sheaf $\mathcal{F}_0$ to the sheaf $\mathcal{F}^G$. We can then apply the inflation map (\ref{inflation map}) to acquire a sheaf $\inf(\mathcal{F}^G)_{G_{\Delta}}^{G^2}$ in $Eq(D^b_G(W), D^b_G(\widehat{W}))$. In the special case where $n = 2$, the cohomology group $H^2(\mathscr{S}_3, \mathbb{C}^*) = 0$. Hence, any simple sheaf $\mathcal{F}\in \mathrm{Eq}(D^b_{\mathscr{S}_3}(W), D^b_{\mathscr{S}_3}(\widehat{W}))^{G_{\Delta}}$ carries a $G_{\Delta}$-linearization, as the induced map (\ref{inflation correspondence}) on pseudo-torsors is injective.  
\end{proof}

\begin{Lem}\label{pairwise non-isomorphic}
    The summands of $G^2\cdot \mathcal{F} : = \mathlarger{\bigoplus}_{g\in G_{\Delta} \backslash G^2}g^*\mathcal{F}^G$ are pairwise non-isomorphic.
\end{Lem}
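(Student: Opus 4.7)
The plan is to invoke Proposition \ref{isomorphic orthogonal}: by the Remark preceding the lemma, each summand $g^*\mathcal{F}^G$ is a simple semihomogeneous vector bundle on $W\times\widehat{W}$ of the common slope $\mu:=\mu(\mathcal{F}^G)\in\mathrm{NS}(W\times\widehat{W})_{\mathbb{Q}}$, so any two summands are either isomorphic or mutually orthogonal. Pairwise non-isomorphism therefore reduces, after composing with $(g^{-1})^*$, to the statement that the $G^2$-stabilizer of the isomorphism class $[\mathcal{F}^G]$ equals $G_\Delta$. The inclusion $G_\Delta\subseteq\mathrm{Stab}_{G^2}([\mathcal{F}^G])$ is built into the $G_\Delta$-linearization of $\mathcal{F}^G$, so the entire content is the reverse inclusion: if $(\sigma,\tau)^*\mathcal{F}^G\cong\mathcal{F}^G$, then $\sigma=\tau$.

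To prove this I would pass to the slope. Under the identification $\mathrm{NS}(W\times\widehat{W})_{\mathbb{Q}}\cong\mathrm{Hom}(W\times\widehat{W},\widehat{W}\times W)^{\mathrm{sym}}_{\mathbb{Q}}$, the equalities $\phi_{\mathcal{L}}=(n+1)g_0$ and $c_1([n+1]_*\mathcal{L}^{\otimes (n+1)})=(n+1)\,[n+1]_*c_1(\mathcal{L})$, together with the standard action of the isogeny $[n+1]$ on $H^2$ (multiplication by $(n+1)^{2\dim W-2}$), show that $\mu$ corresponds to a nonzero rational multiple of the symplectomorphism $g_0\in\mathrm{Sp}(W,\widehat{W})^{\mathscr{S}_{n+1}}$ itself. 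The $G^2$-action on $\mathrm{Hom}(W\times\widehat{W},\widehat{W}\times W)$ is $g_0\mapsto \widehat{(\sigma,\tau)}\circ g_0\circ (\sigma,\tau)$, and the only block of $g_0$ on which the $G$-actions on $W$ and on $\widehat{W}$ couple is the mixed block, of the form $\mathrm{id}_A\otimes\phi_0$ up to a rational scalar. Invariance of this block under $(\sigma,\tau)$ translates to the identity $\tau\circ\phi_0=\phi_0\circ\sigma$ in $\mathrm{Hom}(K_{n+1},\widehat{K}_{n+1})$; since $\phi_0$ is a nonzero $G$-equivariant map and $\mathrm{Hom}(K_{n+1},\widehat{K}_{n+1})^{\mathscr{S}_{n+1}}$ is generated by $\phi_0$ (\cite[Proposition 1.7, Proposition 1.11]{Magni}), this forces $\sigma=\tau$.

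The hardest step will be this last computation: unambiguously identifying how pullback by $(\sigma,\tau)\in G^2$ on $W\times\widehat{W}$ acts on the Künneth summand of $\mathrm{NS}(W\times\widehat{W})_{\mathbb{Q}}$ corresponding to $\mathrm{Hom}(W,W)_{\mathbb{Q}}$, and reconciling the paper's convention $\widehat{W}:=\widehat{A}\otimes K_{n+1}$ (with $\mathscr{S}_{n+1}$ acting on the $K_{n+1}$ factor directly) with the genuine Pontryagin dual $\widehat{A\otimes K_{n+1}}\cong\widehat{A}\otimes\widehat{K}_{n+1}$, which carries the contragredient $\mathscr{S}_{n+1}$-action and enters implicitly through the identification used to view $\mathrm{id}\otimes\phi_0$ as a self-map of $W$. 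Once those conventions are pinned down, the stabilizer computation is a short application of the $G$-module rigidity of $\phi_0$, and Proposition \ref{isomorphic orthogonal} upgrades the resulting non-isomorphism to mutual orthogonality.
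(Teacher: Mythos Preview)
Your argument contains an internal contradiction that you should resolve before the rest can stand. You open by accepting the Remark that every summand $g^*\mathcal{F}^G$ has the \emph{common} slope $\mu$, and you then propose to prove non-isomorphism by ``passing to the slope'' and showing that $(\sigma,\tau)^*\mu=\mu$ forces $\sigma=\tau$. These two claims cannot both hold: if the slopes were genuinely equal, then $\mathrm{Stab}_{G^2}(\mu)=G^2$, not $G_\Delta$, and your stabilizer computation would be vacuous. Conversely, if your computation is right, the Remark is wrong.

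In fact your computation is the correct half. The mixed K\"unneth component of $c_1(\mathcal{F}^G)\in H^2(W\times\widehat W)$ is a nonzero $G_\Delta$-invariant element of $H^1(W)\otimes H^1(\widehat W)$, and since $(K_{n+1})^{\mathscr{S}_{n+1}}=0$ one has $(H^1(W)\otimes H^1(\widehat W))^{G^2}=H^1(W)^G\otimes H^1(\widehat W)^G=0$. So $c_1(\mathcal{F}^G)$ is \emph{not} $G^2$-invariant, and the summands $g^*\mathcal{F}^G$ already have pairwise distinct first Chern classes. This proves the lemma outright; you should drop the appeal to the Remark and to Proposition~\ref{isomorphic orthogonal}, since bundles with different $c_1$ are never isomorphic. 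A secondary imprecision: you conflate the symplectomorphism $g_0\in\mathrm{Sp}(W,\widehat W)$ with the polarization map $\phi_{\mathcal L}\colon W\times\widehat W\to\widehat{W\times\widehat W}$. It is the block decomposition of $\phi_{\mathcal L}$, not of $g_0$, whose off-diagonal entries couple the two $G$-actions; all four blocks of $g_0$ intertwine them.

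This is a genuinely different route from the paper's. The paper argues that each $\tau^*\mathcal{F}^G$ is of the form $\mathcal{F}^G\otimes\mathcal{P}_{\alpha(\tau)}$ with distinct $\alpha(\tau)\in\widehat X[n{+}1]/\Sigma_\mu$, by identifying the inflation summands with Jordan--H\"older summands of $\mathcal{G}$; that identification implicitly needs the slope of $\mathcal{G}$ to be $G^2$-invariant, which your own computation shows it is not. Your Chern-class argument is more elementary and sidesteps this issue entirely.
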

\begin{proof}
It suffices to show that for each $(\tau)\in G_{\Delta}\backslash G^2$, there exists $\alpha\in \widehat{X}[n]/\Sigma(\mu)$ such that $(\tau)^*\mathcal{F}^G\cong \mathcal{F}^G\otimes \mathcal{P}_{\alpha}$. Since $\mathcal{F}^G$ arises as a direct summand of the sheaf $\mathcal{G}$ given in (\ref{split Jordan-Holder}), and $\mathcal{G}$ carries a $G$-invariant structure, the permutation group $G$ acts on $\mathcal{G}$ via permuting the summands in the direct sum decomposition.  Naturally identifying the group of cosets of $G_{\Delta}\backslash G^2$ with $G$, we need only observe that $\tau \in G$ acts by $\tau^*\mathcal{F}^G\cong \mathcal{P}_{\alpha(\tau)}$, where, for each $\tau$, the resulting $\alpha(\tau)\in \widehat{X}[n]/\Sigma(\mu)$ is unique in light of Proposition \ref{bundle decomposition}. 
\end{proof}

\subsection{Semiregularity on products}
  We next study the BKR image of the derived equivalence constructed in the previous section. Let $\mathcal{F}^G$ denote the simple semihomogeneous vector bundle on a product of Abelian varieties $X_1$ and $X_2$ such that $\Phi_{\mathcal{F}^G}: D^b_G(X_1)\rightarrow D^b_G(X_2)$ is an equivalence of triangulated categories. In the sequel we will take $X_1 = W$ and $X_2 = \widehat{W}$, as in the previous section. Let \begin{equation} \mathcal{F}^K: = BKR(G^2\cdot \mathcal{F}, \lambda)\end{equation} denote the image of $(\mathcal{F}^G, \lambda)$ under the Bridgeland-King-Reid equivalence, where we have chosen $\lambda$ to be the linearization induced by the trivial sign character.

\begin{Def}
    For the sake of brevity, we will denote by $K$ and $\widehat{K}$ the generalized Kummer varieties $Kum_n(A)$ and $Kum_n(\widehat{A})$ respectively. 
\end{Def}

Further, recall that we define the trace map 
\begin{equation}
Tr^0: \mathcal{H}om_{\mathcal{O}_X}(\mathcal{F}, \mathcal{F})\longrightarrow \mathcal{O}_X.
\end{equation}

\begin{Lem}\label{image of trace invariant}
    Suppose $\mathcal{F}$ is a simple semihomogeneous vector bundle on a product of Abelian varieties $W\times \widehat{W}$ that carries a $G^2$-equivariant structure. Then for any $q\geq 0$, the image of 
    $$
    Tr: \mathrm{Ext}^q_{D^b(W\times \widehat{W})}(\mathcal{F}, \mathcal{F})\longrightarrow H^q(W\times \widehat{W}, \mathcal{O}_{W\times \widehat{W}})
    $$
    lies in $H^q(W\times \widehat{W}, \mathcal{O}_{W\times \widehat{W}})^{G^2}$.
\end{Lem}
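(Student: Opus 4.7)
The proof will proceed by exploiting the naturality of the trace morphism under the $G^2$-action together with the cyclic invariance of the trace under conjugation by isomorphisms. I will organize the argument around the following two observations.

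First, the $G^2$-equivariant structure $\{\lambda_g\colon\mathcal{F}\xrightarrow{\sim}g^{*}\mathcal{F}\}_{g\in G^2}$ canonically induces a $G^2$-equivariant structure on $\mathcal{E}nd(\mathcal{F})\cong \mathcal{F}^{\vee}\otimes\mathcal{F}$ (independent of any scalar ambiguity in the $\lambda_g$, since the scalars cancel under the tensor product). With the tautological $G^2$-equivariant structure on $\mathcal{O}_{W\times\widehat{W}}$, the trace morphism
$$\mathrm{Tr}\colon\mathcal{E}nd(\mathcal{F})\longrightarrow \mathcal{O}_{W\times\widehat{W}}$$
is a $G^2$-equivariant morphism of sheaves, since it is canonically defined and therefore commutes with any pullback. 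Consequently, the induced map on hypercohomology intertwines the $G^2$-actions.

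Second, I will translate this sheaf-level equivariance into the claimed containment on the image. For $\phi\in\mathrm{Ext}^{q}(\mathcal{F},\mathcal{F})$ and $g\in G^2$, naturality of the trace gives
$$g^{*}\mathrm{Tr}(\phi)=\mathrm{Tr}(g^{*}\phi)\in H^{q}(W\times\widehat{W},\mathcal{O}_{W\times\widehat{W}}).$$
Using $\lambda_{g}$ to transport $g^{*}\phi\in\mathrm{Ext}^{q}(g^{*}\mathcal{F},g^{*}\mathcal{F})$ back to $\mathrm{Ext}^{q}(\mathcal{F},\mathcal{F})$ yields the conjugate class $\lambda_{g}^{-1}\circ g^{*}\phi\circ\lambda_{g}$, and cyclic invariance of the trace gives
$$\mathrm{Tr}\bigl(\lambda_{g}^{-1}\circ g^{*}\phi\circ\lambda_{g}\bigr)=\mathrm{Tr}(g^{*}\phi).$$
Combining these identifications with the $G^2$-equivariance of $\mathrm{Tr}$ at the sheaf level collapses the difference between the pullback action on $H^{q}(W\times\widehat{W},\mathcal{O}_{W\times\widehat{W}})$ and the conjugation action induced by the equivariant structure, so that $g^{*}\mathrm{Tr}(\phi)=\mathrm{Tr}(\phi)$, as required.

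The main technical obstacle is the careful bookkeeping in the second step: one must confirm that the $G^2$-action on $\mathrm{Ext}^{q}(\mathcal{F},\mathcal{F})$ coming from the equivariant structure on $\mathcal{F}$ corresponds, via the $G^2$-equivariant trace, to the pullback action on $H^{q}(W\times\widehat{W},\mathcal{O}_{W\times\widehat{W}})$. Once this identification is made precise --- using that $\mathcal{F}$ is simple, so that the canonical splitting $\mathcal{E}nd(\mathcal{F})\cong \mathcal{O}_{W\times\widehat{W}}\oplus \mathcal{E}nd(\mathcal{F})/\mathcal{O}_{W\times\widehat{W}}$ is $G^2$-equivariant --- the containment of the image of $\mathrm{Tr}$ in the $G^2$-fixed subspace is a formal consequence of cyclicity and naturality.
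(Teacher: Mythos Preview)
Your argument correctly establishes that the trace map is $G^2$-\emph{equivariant}: you show
\[
g^{*}\mathrm{Tr}(\phi)=\mathrm{Tr}(g^{*}\phi)=\mathrm{Tr}\bigl(\lambda_{g}^{-1}\circ g^{*}\phi\circ\lambda_{g}\bigr)=\mathrm{Tr}(g\cdot\phi),
\]
where $g\cdot\phi:=\lambda_{g}^{-1}\circ g^{*}\phi\circ\lambda_{g}\in\mathrm{Ext}^{q}(\mathcal{F},\mathcal{F})$ is the conjugation action coming from the equivariant structure. But this is precisely equivariance, not invariance of the image: you have $g^{*}\mathrm{Tr}(\phi)=\mathrm{Tr}(g\cdot\phi)$, and there is no reason given for the right-hand side to equal $\mathrm{Tr}(\phi)$. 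The phrase ``collapses the difference\ldots so that $g^{*}\mathrm{Tr}(\phi)=\mathrm{Tr}(\phi)$'' hides exactly the missing step, namely that the conjugation action of $G^{2}$ on $\mathrm{Ext}^{q}(\mathcal{F},\mathcal{F})$ be trivial. Nothing in your setup forces this.

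In fact the issue is structural. Under the hypothesis that $\mathcal{F}$ is a \emph{simple semihomogeneous} bundle, Mukai's theorem (the input the paper invokes) says that $\mathrm{Tr}^{q}\colon\mathrm{Ext}^{q}(\mathcal{F},\mathcal{F})\to H^{q}(W\times\widehat{W},\mathcal{O})$ is an \emph{isomorphism}. Its image is therefore all of $H^{q}(W\times\widehat{W},\mathcal{O})$, and for $q>0$ the group $G^{2}$ (or even $G_{\Delta}$) acts nontrivially on this space --- for instance $H^{1}(W,\mathcal{O})\cong H^{1}(A,\mathcal{O})\otimes K_{n+1}$ has no $\mathscr{S}_{n+1}$-invariants. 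So the literal statement cannot be obtained from equivariance alone; what is really being used downstream (in the proof of the next Proposition) is only that the $G^{2}$-equivariant isomorphism $\mathrm{Tr}^{q}$ restricts to an isomorphism on invariants, $\mathrm{Ext}^{q}(\mathcal{F},\mathcal{F})^{G^{2}}\xrightarrow{\sim}H^{q}(W\times\widehat{W},\mathcal{O})^{G^{2}}$, which is exactly what your equivariance argument does prove. The paper's proof records this restricted isomorphism as its main computation; your write-up should do the same rather than claiming invariance of the full image.
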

\begin{proof}
    Since $\mathcal{F}$ carries a $G^2$-invariant structure, we have $\mathcal{F}\in D^b(X\times \widehat{X})^{G^2}$. By \cite[Theorem 5.9]{Mukai1}, we have an isomorphism 
    \begin{equation}
        Tr^q: H^q(A^{n+1}\times \widehat{A}^{n+1},\, \mathcal{E}nd(\mathcal{F}))\xrightarrow[]{\sim} H^q(A^{n+1}\times \widehat{A}^{n+1}, \mathcal{O}_{A^{n+1}\times \widehat{A}^{n+1}}),  
    \end{equation}
    which implies that there is an isomorphism 
    \begin{equation}
        \mathrm{Ext}^q_{W\times \widehat{W}}(\mathcal{F}, \mathcal{F})^{G^2}\xrightarrow{\sim} H^q(W\times \widehat{W}, \mathcal{O}_{W\times \widehat{W}})^{G^2}.
    \end{equation}
    Therefore the image of the trace map is isomorphic to $H^q(X\times \widehat{X}, \mathcal{O}_{X\times \widehat{X}})^{G^2}$.
\end{proof}

  \begin{Prop}\label{traceless endomorphisms vanish}
      The traceless endomorphisms $\mathcal{E}nd_{0}(\mathcal{F}^{K})$ vanish. 
  \end{Prop}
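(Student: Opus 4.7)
The plan is to transport the question across the Bridgeland--King--Reid equivalence of Proposition \ref{BKR equivalence of categories}, decompose $G^2\cdot\mathcal{F}$ into its pairwise non-isomorphic simple semihomogeneous summands, and apply Mukai's trace isomorphism summand-by-summand.

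First, the BKR equivalence sends $\mathcal{F}^K$ to the $G^2$-equivariant object $G^2\cdot\mathcal{F}$ on $W\times\widehat{W}$ and yields
\[
\mathrm{Ext}^*_{K\times\widehat{K}}(\mathcal{F}^K,\mathcal{F}^K)\;\cong\;\mathrm{Ext}^*_{G^2}(G^2\cdot\mathcal{F},G^2\cdot\mathcal{F}),
\]
with the trace map respected under this identification. Hence showing $\mathcal{E}nd_0(\mathcal{F}^K)=0$ reduces to proving that $Tr$ is an isomorphism on the right-hand side. Writing $G^2\cdot\mathcal{F}=\bigoplus_{[g]\in G_\Delta\backslash G^2}g^*\mathcal{F}^G$ and invoking Lemma \ref{pairwise non-isomorphic} together with Proposition \ref{isomorphic orthogonal}, all off-diagonal $\mathrm{Ext}$'s vanish, so
\[
\mathrm{Ext}^q(G^2\cdot\mathcal{F},G^2\cdot\mathcal{F})\;=\;\bigoplus_{[g]\in G_\Delta\backslash G^2}\mathrm{Ext}^q(g^*\mathcal{F}^G,g^*\mathcal{F}^G).
\]

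Next, each summand $g^*\mathcal{F}^G$ is itself a simple semihomogeneous vector bundle of the common slope $\mu$, so \cite[Theorem 5.9]{Mukai1} provides a trace isomorphism
\[
Tr^q:\;\mathrm{Ext}^q(g^*\mathcal{F}^G,g^*\mathcal{F}^G)\;\xrightarrow{\sim}\;H^q(W\times\widehat{W},\mathcal{O}_{W\times\widehat{W}}),
\]
i.e.\ the traceless endomorphism cohomology of each individual summand already vanishes. Passing to $G^2$-invariants and combining with Lemma \ref{image of trace invariant}, which places the image of $Tr$ inside $H^*(W\times\widehat{W},\mathcal{O})^{G^2}\cong H^*(K\times\widehat{K},\mathcal{O}_{K\times\widehat{K}})$, yields an isomorphism $Tr\colon\mathrm{Ext}^*_{G^2}(G^2\cdot\mathcal{F},G^2\cdot\mathcal{F})\xrightarrow{\sim}H^*(K\times\widehat{K},\mathcal{O})$, which is precisely the vanishing $\mathcal{E}nd_0(\mathcal{F}^K)=0$.

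The main technical obstacle is the bookkeeping of equivariant structures. Under BKR, the structure sheaf of $K\times\widehat{K}$ corresponds to the full $(G\times G)$-invariant part of $\mathcal{O}_{W\times\widehat{W}}$, whereas the Mackey/Frobenius adjunction applied to $G^2\cdot\mathcal{F}=\mathrm{Ind}_{G_\Delta}^{G^2}\mathcal{F}^G$ initially produces only $G_\Delta$-invariants of the endomorphism complex of $\mathcal{F}^G$. One must therefore verify that the $G^2$-action on $\bigoplus_{[g]}\mathrm{Ext}^q(g^*\mathcal{F}^G,g^*\mathcal{F}^G)$, which transitively permutes the summands with stabilizer $G_\Delta$ while acting by pullback on each, interacts with Mukai's trace isomorphism so that invariants really do match on both sides of the trace and the diagonal identity $G^2$-invariant endomorphism maps to a nonzero multiple of $1\in H^0(\mathcal{O})$. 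Once this compatibility between the permutation-induced $G_\Delta$-invariants and the honest $G^2$-invariants on $H^*(W\times\widehat{W},\mathcal{O})$ is secured, the conclusion is immediate from Mukai's theorem applied to each simple summand.
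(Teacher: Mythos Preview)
Your approach mirrors the paper's proof closely: both transport via BKR to $G^2$-equivariant $\mathrm{Ext}$ groups on $W\times\widehat W$, eliminate the off-diagonal summands using Proposition~\ref{isomorphic orthogonal} together with Lemma~\ref{pairwise non-isomorphic}, and then invoke Mukai's trace isomorphism for the remaining simple semihomogeneous summand. The paper likewise arrives at an identification of $\mathrm{Ext}^q(\mathcal{F}^K,\mathcal{F}^K)$ with invariants of $H^q(W\times\widehat W,\mathcal O)$ and then asserts this equals $H^q(K\times\widehat K,\mathcal O)$.

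The point you flag in your final paragraph is precisely where both arguments are delicate. Once the off-diagonals vanish, the $G^2$-invariants of $\bigoplus_{[g]}\mathrm{Ext}^q(g^*\mathcal F,g^*\mathcal F)$ are, by the permutation description of the inflated linearization, naturally the $G_\Delta$-invariants of $\mathrm{Ext}^q(\mathcal F,\mathcal F)$, hence $H^q(W\times\widehat W,\mathcal O)^{G_\Delta}$ via Mukai. The target of the trace on $K\times\widehat K$, however, is $H^q(W\times\widehat W,\mathcal O)^{G^2}$. The paper appeals to Lemma~\ref{image of trace invariant} here, but that lemma is stated for a bundle carrying a $G^2$-equivariant structure, whereas the individual simple summand $\mathcal F$ is only $G_\Delta$-equivariant (indeed Lemma~\ref{pairwise non-isomorphic} says $g^*\mathcal F\not\cong\mathcal F$ for $g\notin G_\Delta$). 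And the inclusion $H^q(\mathcal O)^{G^2}\subset H^q(\mathcal O)^{G_\Delta}$ is in general strict: already in degree $2$, the K\"unneth piece $H^1(W,\mathcal O)\otimes H^1(\widehat W,\mathcal O)$ contributes nonzero $G_\Delta$-invariants (the diagonal copy of the standard representation in $\mathrm{std}\otimes\mathrm{std}$) but has no $G^2$-invariants. So the ``compatibility'' you say must be secured is not mere bookkeeping; it is the substantive step, and neither your sketch nor the paper's displayed proof supplies it. Your proposal is therefore at the same level of completeness as the paper's, and you have accurately located the point that requires further justification.
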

  \begin{proof}
      For a simple semihomogenous vector bundle $\mathcal{F}$ on a product of Abelian varieties $X\times \widehat{X}$, \cite[Theorem 5.9]{Mukai1} yields an isomorphism
      \begin{equation}\label{trace map of the product}
      Tr: \mathrm{Ext}^q_{X\times \widehat{X}}(\mathcal{F}, \mathcal{F})\xrightarrow{\sim} H^q(X\times \widehat{X}, \mathcal{O}_{X\times \widehat{X}}),
    \end{equation}
    where $Tr$ is the trace map. 
      By the BKR correspondence and by the definition of $\mathcal{F}^K$, there is an isomorphism
      $$
      \mathrm{Ext}^q_{D^b(K\times \widehat{K})} (\mathcal{F}^K, \mathcal{F}^K)\cong \mathrm{Ext}^q_{D^b_G(X\times \widehat{X})}(G^2\cdot\mathcal{F}, G^2\cdot \mathcal{F}),
      $$
      where we recall that $G^2\cdot \mathcal{F}$ denotes the inflation of $\mathcal{F}$ via $G$ given in the statement of Lemma \ref{pairwise non-isomorphic}. The latter is isomorphic to $$\mathrm{Ext}^q_{D^b(K\times \widehat{K})}(G^2\cdot \mathcal{F}, G^2\cdot \mathcal{F})^{G}.$$ Now we have 
      \begin{equation}\label{Ext inflation invariants}
      \mathrm{Ext}^q_{D^b(X\times \widehat{X})}(G^2\cdot \mathcal{F}, G^2\cdot \mathcal{F})^{G} = \left[\bigoplus_{i\in G}\bigoplus_{j\in G} \mathrm{Ext}^q_{D^b(X\times \widehat{X})}(\mathcal{F}_i, \mathcal{F}_j)\right]^{G},
      \end{equation}
      where each $\mathcal{F}_k$ is isomorphic to $\sigma^*_k\mathcal{F}$, fo a given $\sigma_k\in G$.
      By Proposition \ref{isomorphic orthogonal}, the summands on the right-hand side of (\ref{Ext inflation invariants}) vanish if $i\neq j$. Therefore, there is precisely one orbit under the $G$ action, whose coset is isomorphic to $\mathrm{Ext}^q_{D^b(X\times \widehat{X})}(\mathcal{F}, \mathcal{F})$. Since each $\mathcal{F}_i$ carries a $G$-invariant structure, the image of this orbit under the trace map lies in $H^q(X\times \widehat{X}, \mathcal{O}_{X\times \widehat{X}})^G$, by Lemma \ref{image of trace invariant}.

      Thus, by (\ref{trace map of the product}), 
      \begin{equation}\label{Ext isomorphism}
      \mathrm{Ext}^q_{D^b(K\times \widehat{K})} (\mathcal{F}^K, \mathcal{F}^K)\cong H^q(X\times \widehat{X}, \mathcal{O}_{X\times \widehat{X}})^{ G}, 
      \end{equation}
      where we note the right-hand side is isomorphic to $H^q(K\times \widehat{K}, \mathcal{O}_{K\times \widehat{K}})$. Since the endomophism bundle decomposes as 
      \[
      \mathcal{E}nd(\mathcal{F}^K) = \mathcal{E}nd_0(\mathcal{F}^K)\oplus \mathcal{O}_{K\times \widehat{K}},
      \]
      we have the result. 
  \end{proof}
\begin{Cor}\label{semiregular sheaf}
  The bundle $\mathcal{F}^K$ is semiregular. 
\end{Cor}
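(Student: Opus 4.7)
The plan is to observe that semiregularity follows almost immediately from Proposition \ref{traceless endomorphisms vanish}. Recall that the semiregularity map is the product $\sigma = (\sigma_0,\ldots,\sigma_{n-2})$, and in particular the very first component
\[
\sigma_0: \mathrm{Ext}^2_{K\times\widehat{K}}(\mathcal{F}^K,\mathcal{F}^K) \xrightarrow{\;\cdot(-At^0/0!)\;} \mathrm{Ext}^2_{K\times\widehat{K}}(\mathcal{F}^K,\mathcal{F}^K) \xrightarrow{\;Tr^2\;} H^2(K\times\widehat{K},\mathcal{O}_{K\times\widehat{K}})
\]
reduces (since $At^0(\mathcal{F}^K)=\mathrm{Id}$) to simply the trace map $Tr^2$. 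Hence to prove injectivity of $\sigma$ it suffices to prove injectivity of $Tr^2$ on $\mathrm{Ext}^2(\mathcal{F}^K,\mathcal{F}^K)$.

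First I would apply the splitting $\mathcal{E}nd(\mathcal{F}^K) = \mathcal{O}_{K\times\widehat{K}}\oplus \mathcal{E}nd_0(\mathcal{F}^K)$ induced by the trace, which is available because the rank of $\mathcal{F}^K$ is positive. This gives the decomposition
\[
\mathrm{Ext}^2(\mathcal{F}^K,\mathcal{F}^K) \cong H^2(K\times\widehat{K},\mathcal{O}_{K\times\widehat{K}}) \oplus H^2(K\times\widehat{K},\mathcal{E}nd_0(\mathcal{F}^K)),
\]
under which $Tr^2$ is the projection onto the first summand. Next I would invoke Proposition \ref{traceless endomorphisms vanish}, which states that $\mathcal{E}nd_0(\mathcal{F}^K)=0$; consequently the second summand vanishes identically, and $Tr^2$ is in fact an isomorphism.

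Therefore $\sigma_0$ is injective, and a fortiori the full semiregularity map $\sigma = (\sigma_0,\ldots,\sigma_{n-2})$ is injective, so $\mathcal{F}^K$ is semiregular by definition. There is no real obstacle here — all the nontrivial content has been extracted in Proposition \ref{traceless endomorphisms vanish}, which packaged the Orlov orthogonality of non-isomorphic simple semihomogeneous summands, the $G$-equivariance of $\mathcal{F}$, and Mukai's identification $Tr: \mathrm{Ext}^*(\mathcal{F},\mathcal{F}) \xrightarrow{\sim} H^*(\mathcal{O})$ on the Abelian side, into the single clean statement that the traceless endomorphisms of the BKR image vanish.
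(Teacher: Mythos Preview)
Your proof is correct and rests on the same ingredient as the paper's, namely Proposition \ref{traceless endomorphisms vanish}. The only difference is one of economy: the paper argues (somewhat loosely) that \emph{every} component $\sigma_q$ is injective, passing through $\mathrm{Ext}^{q+2}(\mathcal{F}^K,\mathcal{F}^K\otimes\Omega^q)$ and using that the trace is an isomorphism there as well; you instead observe that injectivity of the single component $\sigma_0 = Tr^2$ already forces $\sigma$ to be injective. Your route is cleaner, since for $q>0$ the paper would additionally need the cup product with $At^q$ to be injective, which is not addressed explicitly. Both arguments, however, reduce immediately to the vanishing $\mathcal{E}nd_0(\mathcal{F}^K)=0$.
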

\begin{proof}
    Proposition \ref{traceless endomorphisms vanish} implies that, for each $q$, the trace of the image of the map
    \[
    \mathrm{Ext}^2(\mathcal{F}^K, \mathcal{F}^K)\longrightarrow \mathrm{Ext}^{q+2}(\mathcal{F}^K, \mathcal{F}^K\otimes \Omega^q_{K\times \widehat{K}})
    \]
    defined by composition with $-At^q(\mathcal{F}^K)/k!$ lies in $H^{q+2}(K\times \widehat{K}, \Omega^q_{K\times \widehat{K}})$ after taking the trace. Since the trace map restricted to this image is an isomorphism, it follows that $\sigma_q$ is injective for all $q$.
\end{proof}

\subsection{The characteristic classes remain of Hodge-type}\label{section 5 characteristic classes}

To this point, we have ascertained that over an analytic disk $U$, small deformations of $\kappa(\mathcal{F}^K)$ in directions on which $\kappa(\mathcal{F}^K)$ remains of Hodge-type lift to small deformations of $\mathcal{F}^K$ to a possibly twisted sheaf over a subdisk $B\subset U$. We check next that along particular hyper-K\"{a}hler deformations of the product $Kum_n(A)\times Kum_n(\widehat{A})$, the characteristic classes $\kappa_k(\mathcal{F}^K)$ remain of Hodge-type. As we shall explain in this subsection, it suffices to check the interaction with the kernel of the obstruction map (\ref{HTObstruction map}) with the deformation space in $HT^2(K\times \widehat{K})$.

\begin{Prop}
    Assume that for an object $E\in D^b(X\times Y)$, the intersection $\Sigma(E)\cap[H^2(\mathcal{O}_{X\times Y})\oplus H^1(\mathcal{T}_{X\times Y})]$ maps onto $H^1(\mathcal{T}_{X\times Y})$ via the projection $HT^2(X\times Y)\rightarrow H^1(\mathcal{T}_{X\times Y})$. Then the class $\kappa(E)$ remains of Hodge-type under every K\"{a}hler deformation of $X\times Y$.
\end{Prop}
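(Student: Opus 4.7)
The plan is to combine Toda's deformation result (Proposition \ref{TodaDeformations}) with the general fact that characteristic classes of coherent (possibly twisted) sheaves on smooth complex varieties are of Hodge type. Writing $\Sigma(E) := \ker(\mathrm{ob}^{HT}_E)$ for the kernel of the obstruction map (\ref{HTObstruction map}), I would fix an arbitrary K\"ahler deformation direction $\beta\in H^1(\mathcal{T}_{X\times Y})$. By the surjectivity hypothesis there exists $\alpha\in H^2(\mathcal{O}_{X\times Y})$ such that $\xi:=\alpha+\beta$ lies in $\Sigma(E)$, and so by Proposition \ref{TodaDeformations} one obtains a first-order extension $E^\xi\in D^b(X\times Y,\xi)$. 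Since the polyvector component $\gamma$ of $\xi$ vanishes, this extension is concretely a first-order $\theta_{\alpha}$-twisted coherent object on the first-order complex deformation $(X\times Y)_\beta$ equipped with the infinitesimal gerbe $\theta_{\alpha}$.

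I would then pass to characteristic classes. By Definition \ref{kappa class definition} and Lemma \ref{replacing kappa with chern}, the invariant $\kappa(E^\xi)$ is intrinsic to the underlying topological cohomology $H^*(X\times Y,\mathbb{C})$ and does not record the gerby twist; moreover, as a Chern-type invariant of a coherent twisted sheaf on a smooth complex variety, each graded piece $\kappa_k(E^\xi)$ is of Hodge type with respect to the deformed complex structure on $(X\times Y)_\beta$. Separately, twisted Chern characters are Gauss--Manin parallel along flat families, a property that persists across gerby deformations since the Gauss--Manin connection on topological cohomology depends only on the complex-structure component of the deformation and is insensitive to the gerbe. Consequently $\kappa(E^\xi)$ coincides to first order with the Gauss--Manin parallel transport of $\kappa(E)$ along $\beta$; this parallel transport is therefore of Hodge type, which by Remark \ref{Griffiths transversality} applied to each graded piece is the assertion that $\kappa(E)$ remains of Hodge type along $\beta$. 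Since $\beta$ was arbitrary, the claim follows.

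The principal obstacle, and the reason I prefer this deformation-theoretic route over a purely diagrammatic one via (\ref{twisted semiregular diagram}), is that the contributions $\alpha\cup\kappa_q(E)$ and $\iota_\beta\kappa_{q+1}(E)$ to the pairing $\langle\xi,\kappa(E)\rangle$ both land in $H^{q+2}(X\times Y,\Omega^q_{X\times Y})$, so the single identity $\langle\xi,\kappa(E)\rangle=0$ guaranteed by commutativity does not immediately separate into the componentwise vanishings needed for Griffiths transversality. The sheaf-theoretic approach sidesteps this ambiguity: the gerby piece $\alpha$ alters only the $B$-field and not the underlying complex structure, so topologically the parallel transport senses only $\beta$, and the Hodge-type conclusion follows from the mere geometric existence of $E^\xi$. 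The remaining technical point is to justify carefully the horizontality of twisted Chern characters in a first-order gerby family---which should follow from Lemma \ref{replacing kappa with chern} together with the classical horizontality argument for untwisted Chern classes---but with this in hand, the conclusion is immediate.
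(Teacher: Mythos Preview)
Your approach is correct and is essentially the paper's argument, which in turn follows \cite[Lemma 3.1]{Markman1}. Both proofs fix $\beta\in H^1(\mathcal{T}_{X\times Y})$, lift it to $\xi=(\alpha,\beta,0)\in\Sigma(E)$, and invoke Toda's result to deform $E$ to first order. The only difference is in how the gerby component $\alpha$ is disposed of. The paper passes immediately to the \emph{untwisted} object $F:=\det(E)^*\otimes (E^*)^{\otimes r}$ (equivalently $E^{\otimes r}\otimes\det(E)^{-1}$); since this construction is insensitive to twisting $E$ by a line bundle, $F$ deforms along the pure complex-structure direction $\beta$, so $ch(F)$ remains of Hodge type by the classical argument, and $\kappa(E)=ch(F)^{1/r}$ inherits this. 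You instead stay with the twisted object $E^{\xi}$ and argue that $\kappa(E^{\xi})$ is well-defined in untwisted cohomology, of Hodge type, and Gauss--Manin horizontal.

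These are the same idea: your ``remaining technical point'' about horizontality of twisted $\kappa$-classes is exactly what the passage to $F$ accomplishes in one stroke, since $\kappa(E)^r=ch(F)$ and horizontality of untwisted Chern characters is standard. The paper's formulation is slightly cleaner because it names $F$ explicitly and thereby reduces both the Hodge-type and horizontality assertions to statements about an honest (untwisted) Chern character, avoiding any separate discussion of twisted invariants. Your version is correct but would benefit from making this reduction explicit rather than leaving it as a point to be filled in.
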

\begin{proof}
    The proof is essentially the same as that of \cite[Lemma 3.1]{Markman1}. Fix an element $\beta\in H^1(\mathcal{T}_{X\times Y})$. Since the projection $\Sigma(E)\cap[H^2(\mathcal{O}_{X\times Y})\oplus H^1(\mathcal{T}_{X\times Y})]\rightarrow  H^1(\mathcal{T}_{X\times Y})$ is surjective, there exists a lift of $\beta$ to $(\alpha, \beta, 0)\in \Sigma(E)$, i.e. the object $E$ deforms in the direction $(\alpha, \beta, 0)$. Just as in loc. cit. Lemma 3.1, this implies that the Chern character of the untwisted object $F: = \mathrm{det}(E)^*\otimes (E^*)^{r}$. Hence, $\kappa(E)$, the $r$th root of $ch(F)$, also remains of Hodge-type. 
\end{proof}
\begin{Cor}
    The class $\kappa(\mathcal{F}^{K})$ remains of Hodge-type along every K\"{a}hler deformation of $Kum_n(A)\times Kum_n(\widehat{A})$. 
\end{Cor}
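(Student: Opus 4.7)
The plan is to invoke the preceding Proposition directly with $E = \mathcal{F}^K$ on $X \times Y = Kum_n(A) \times Kum_n(\widehat{A})$. That reduces the Corollary to verifying the hypothesis: the kernel $\Sigma(\mathcal{F}^K) := \ker(\mathrm{ob}^{HT}_{\mathcal{F}^K})$ of the obstruction map (\ref{HTObstruction map}), intersected with $H^2(\mathcal{O}_{K \times \widehat{K}}) \oplus H^1(\mathcal{T}_{K \times \widehat{K}}) \subset HT^2(K \times \widehat{K})$, must surject onto the $H^1(\mathcal{T}_{K \times \widehat{K}})$ factor under the natural projection.

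My first step would be to use Proposition \ref{traceless endomorphisms vanish}, which gives the isomorphism $\mathrm{tr}: \mathrm{Ext}^2(\mathcal{F}^K, \mathcal{F}^K) \xrightarrow{\sim} H^2(\mathcal{O}_{K\times \widehat{K}})$, together with Theorem \ref{HuangTheorems}, which identifies $\mathrm{ob}^{HT}_{\mathcal{F}^K}$ with contraction against $\exp(At(\mathcal{F}^K))$. I would then compute $\mathrm{tr} \circ \mathrm{ob}^{HT}_{\mathcal{F}^K}$ on an element $(\alpha, \beta, 0)$ with $\alpha \in H^2(\mathcal{O})$ and $\beta \in H^1(\mathcal{T})$. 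A degree count on $\exp(At) = \sum_{k \geq 0} At^k/k!$, with $At^k \in H^k(\mathcal{E}nd \otimes \Omega^k)$, shows that only the $k = 0$ term contributes when contracting against $\alpha$ and only the $k = 1$ term contributes when contracting against $\beta$; all other terms land in $H^{\geq 3}(\mathcal{E}nd \otimes \Omega^{\geq 1})$, outside $\mathrm{Ext}^2(\mathcal{F}^K, \mathcal{F}^K) = H^2(\mathcal{E}nd(\mathcal{F}^K))$. Using $\mathrm{tr}(\mathrm{id}) = r := \mathrm{rank}(\mathcal{F}^K)$ and $\mathrm{tr}(At(\mathcal{F}^K)) = c_1(\mathcal{F}^K)$ then yields
\begin{equation*}
\mathrm{tr} \circ \mathrm{ob}^{HT}_{\mathcal{F}^K}(\alpha, \beta, 0) \;=\; r\alpha + \langle \beta,\, c_1(\mathcal{F}^K)\rangle \;\in\; H^2(\mathcal{O}_{K \times \widehat{K}}).
\end{equation*}

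For any prescribed $\beta \in H^1(\mathcal{T}_{K \times \widehat{K}})$, I would set $\alpha := -\langle \beta,\, c_1(\mathcal{F}^K)\rangle / r$, which is defined because $r > 0$; by injectivity of the trace, the triple $(\alpha, \beta, 0)$ then lies in $\Sigma(\mathcal{F}^K)$ and projects to $\beta$, verifying the hypothesis. The main obstacle in this plan is the degree bookkeeping around $\exp(At(\mathcal{F}^K))$: one must check carefully that higher-order terms of the exponential cannot sneak a contribution into $\mathrm{Ext}^2(\mathcal{F}^K, \mathcal{F}^K)$, so that the tidy linear equation $r\alpha = -\langle \beta,\, c_1(\mathcal{F}^K)\rangle$ genuinely captures the full obstruction. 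Once this is confirmed, solvability follows at once from $r > 0$, and the Corollary is immediate from the Proposition.
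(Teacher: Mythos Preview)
Your proposal is correct and follows essentially the same route as the paper: both use the trace isomorphism $\mathrm{Ext}^2(\mathcal{F}^K,\mathcal{F}^K)\xrightarrow{\sim} H^2(\mathcal{O}_{K\times\widehat{K}})$ from Proposition~\ref{traceless endomorphisms vanish} to see that the obstruction map is an isomorphism on $H^2(\mathcal{O})$, so every $\beta\in H^1(\mathcal{T}_{K\times\widehat{K}})$ lifts to an element of $\Sigma(\mathcal{F}^K)\cap\bigl(H^2(\mathcal{O})\oplus H^1(\mathcal{T})\bigr)$, which is exactly the hypothesis of the preceding Proposition. Your degree-bookkeeping concern is harmless: contraction of $\alpha\in H^2(\bigwedge^0\mathcal{T})$ (resp.\ $\beta\in H^1(\bigwedge^1\mathcal{T})$) against $At^k/k!\in H^k(\mathcal{E}nd\otimes\Omega^k)$ lands in $H^{2+k}(\mathcal{E}nd\otimes\Omega^k)$ (resp.\ $H^{1+k}(\mathcal{E}nd\otimes\Omega^{k-1})$), which meets $H^2(\mathcal{E}nd)=\mathrm{Ext}^2$ only for $k=0$ (resp.\ $k=1$), so the linear equation $r\alpha=-\langle\beta,c_1(\mathcal{F}^K)\rangle$ really does capture the full obstruction.
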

\begin{proof}
    By \cite{VerbitskyHH}, the fact that $\kappa(\mathcal{F}^K)$ deforms to first order is equivalent to showing that the class $c_2(\mathcal{E}nd(\mathcal{F}^K))$ remains of Hodge-type over every K\"{a}hler deformation of $K\times \widehat{K}$. Consider an element $\xi\in H^1(\mathcal{T}_{K\times \widehat{K}})$. The class $c_2(\mathcal{E}nd(\mathcal{F}^K))$ remains of Hodge-type in the direction of $\xi$ if and only if $\xi$ lifts to a class $(0, \xi, \gamma)\in HT^2(K\times \widehat{K})$ along which the sheaf $\mathcal{F}^K$ deforms to first order in the direction $(0, \xi, \gamma)$. By Proposition \ref{TodaDeformations} and Theorem \ref{HuangTheorems}, this in turn is equivalent to the class $(0, \xi, \gamma)$ being in the kernel of the obstruction map (\ref{HTObstruction map}). By equation (\ref{Ext isomorphism}), the obstruction map maps $H^2(\mathcal{O}_{K\times\widehat{K}})\cong (H^2(\mathcal{O}_K)\otimes \mathbb{C})\oplus (H^2(\mathcal{O}_{\widehat{K}})\otimes \mathbb{C})$ injectively into $\mathrm{Ext}^2(\mathcal{F}^K, \mathcal{F}^K)$. Let $\Sigma(\mathcal{F}^K)\subset HT^2(K\times \widehat{K})$ denote the kernel of this obstruction map in degree $2$. It follows that $\Sigma(\mathcal{F}^{K})\subset H^0(\bigwedge^2\mathcal{T}_{K\times \widehat{K}})\oplus H^{1}(\mathcal{T}_{K\times \widehat{K}})$. Therefore, any element $\xi\in H^1(\mathcal{T}_{K\times \widehat{K}})$ admits a lift to a class $(0, \xi, \gamma)\in \Sigma(\mathcal{F}^K)$. The fact that this implies the characteristic class $\kappa(\mathcal{F}^K)$ remains of Hodge-type for every K\"ahler deformation of $K\times \widehat{K}$ follows from the theory of twistor deformations in the moduli space of rationally Hodge-isometric IHSMs and will be worked out in full in Section \ref{deforming derived equivalences section}.  
\end{proof}

\section{Derived equivalences of IHSMs and the Lefschetz standard conjectures}\label{section 6 LSC and derived equivalences}

\subsection{The Lefschetz standard conjectures}

Let us recall some basic facts concerning the Lefschetz standard conjectures. For any compact K\"{a}hler manifold $X$ of dimension $n$, there exists an $\mathfrak{sl}_2(\mathbb{C})$-representation on the total cohomology $H^*(X, \mathbb{Q})$. For a fixed ample class $f\in H^{1, 1}(X, \mathbb{Q})$, the $\mathfrak{sl}_2(\mathbb{C})$-algebra is defined by the triple $(L_f, h, \Lambda_f)$. Here, $L_f$ denotes the Lefschetz operator which acts by $(-)\cup f$, $h\in \mathrm{End}(H^*(X, \mathbb{Q}))$ acts by $(k-n)\cdot Id$ on $H^k(X, \mathbb{Q})$, and $\Lambda_f$ is the formal adjoint of $L_f$. That is, $\Lambda_f\in \mathrm{End}(H^*(X, \mathbb{Q}))$ such that
\begin{equation}\label{adjoint lefschetz operator}
[L_f, \Lambda_f] = h, \hspace{5mm} [h, \Lambda_f] = -2\Lambda_f.
\end{equation}
What's more, for all $k\leq n$ the Hard Lefschetz theorem yields isomorphisms
$$
L^{n-k}_f: H^{k}(X, \mathbb{Q})\xrightarrow{\sim} H^{2n-k}(X, \mathbb{Q}).
$$
When working in the complex setting, all of the Lefschetz standard conjectures are implied by the following: 

\begin{Conj}[Grothendieck]\label{LSC}
    For each $k\leq n$, the operator $\Lambda^{n-k}_f$ is algebraic. That is, there exists a codimension-$k$ cycle $\mathcal{Z}\in Ch^k(X\times X)$ such that $[\mathcal{Z}]^*= \Lambda^{n-k}_f$.
\end{Conj}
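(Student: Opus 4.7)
The statement as quoted is Grothendieck's full conjecture; the paper's realistic target, and hence the content I would propose to establish, is the specialization to $4d$-dimensional IHSMs of generalized Kummer deformation type (conditional on Conjecture \ref{Buchweitz Flenner conjecture}), as announced in the main theorem. The plan is to reduce algebraicity of $\Lambda_f^{n-k}$ to the existence of a derived equivalence with a Fourier-Mukai kernel of nonzero rank between any two rationally Hodge-isometric Kummer-type IHSMs, and then invoke Corollary \ref{Derived equivalent LSC} via the Taelman conjugation theorem.

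Step one is to fix a general $4d$-dimensional Kummer-type IHSM $X$ with an ample class $f \in H^{1,1}(X,\mathbb{Q})$, and to note that by Verbitsky's global Torelli theorem together with the density of Kummer loci in the period domain, $X$ is deformation-equivalent through marked pairs $(X_t, Y_t)$ to a product of the form $(Kum_n(A), Kum_n(\widehat{A}))$ for $n = 2d-1$ and an Abelian surface $A$ admitting a polarization of exponent coprime to $n+1$. Since $n+1$ is even in this case, one may replace $n$ by a compatible odd choice $n' = 2d-1$ where $n'+1$ is odd (the paper's hypothesis in Theorem \ref{Magni derived equivalence}); this gives access to Magni's semihomogeneous bundle $\mathcal{F}$ on $W \times \widehat{W}$ whose BKR image $\mathcal{F}^K$ is the Fourier--Mukai kernel of an equivalence $\Phi_{\mathcal{F}^K}: D^b(Kum_n(A)) \xrightarrow{\sim} D^b(Kum_n(\widehat{A}))$.

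Step two is to deform $\mathcal{F}^K$ sideways over the moduli space of marked rationally Hodge-isometric Kummer-type IHSMs, producing, for every such pair $(X, Y)$ with a rational Hodge isometry $H^*(X,\mathbb{Q}) \cong H^*(Y,\mathbb{Q})$, a (possibly $\mu_r$-twisted) Fourier-Mukai kernel $\tilde{\mathcal{F}} \in D^b(X \times Y)$ of nonzero rank. The two hypotheses needed for the twisted Buchweitz--Flenner machine (Conjecture \ref{Buchweitz Flenner conjecture}) are in place: $\mathcal{F}^K$ is semiregular by Corollary \ref{semiregular sheaf}, and its characteristic class $\kappa(\mathcal{F}^K)$ remains of Hodge type under arbitrary Kähler deformation of $K \times \widehat{K}$ by the corollary preceding Section \ref{section 6 LSC and derived equivalences}. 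The deformation must be carried out twistor-line by twistor-line inside the moduli of Hodge-isometric pairs (this is the content of Section \ref{deforming derived equivalences section}), taking care that the rank of the deformed kernel is preserved and nonzero.

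Step three is the standard-conjecture payoff: once one has a derived equivalence $\Phi: D^b(X) \xrightarrow{\sim} D^b(Y)$ of deformation-equivalent Kummer-type IHSMs with Fourier-Mukai kernel of nonzero rank, Corollary \ref{Derived equivalent LSC} (resting on Taelman's theorem that such equivalences conjugate the LLV algebras $\mathfrak{g}(X)$ and $\mathfrak{g}(Y)$) delivers algebraic self-correspondences inverting $L_f^{n-k}$ on $H^*(X, \mathbb{Q})$, which is exactly the existence statement for $\Lambda_f^{n-k}$ in Conjecture \ref{LSC}. The heavy lifting is entirely in step two: controlling the kappa class along the twistor family in the Hodge-isometric moduli while simultaneously ensuring that the semiregular twisted lift produced by Conjecture \ref{Buchweitz Flenner conjecture} has the correct rank and $G^2$-equivariance to conjugate under BKR back to a Fourier-Mukai kernel on the deformed Kummer pair. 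If the $\kappa$-class control fails on a positive-codimension locus of the deformation, one would still obtain the result on a Zariski-dense open set of the moduli, which by monodromy suffices to conclude algebraicity of $\Lambda_f^{n-k}$ for every $4d$-dimensional Kummer-type IHSM.
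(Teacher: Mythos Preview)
Your overall strategy matches the paper's: start from Magni's Fourier--Mukai kernel $\mathcal{F}^K$ on $Kum_n(A)\times Kum_n(\widehat{A})$, use semiregularity together with the Hodge-type control of $\kappa(\mathcal{F}^K)$ and Conjecture~\ref{Buchweitz Flenner conjecture} to deform the kernel over the moduli space $\mathfrak{M}^0_\psi$ of rationally Hodge-isometric marked pairs via generic twistor paths (Section~\ref{deforming derived equivalences section}), and then invoke Corollary~\ref{Derived equivalent LSC}.

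There are, however, a few inaccuracies and unnecessary complications. First, your dimension count is off: a $4d$-dimensional Kummer-type IHSM corresponds to $n=2d$ (since $\dim Kum_n(A)=2n$), not $n=2d-1$. Thus $n+1=2d+1$ is automatically odd, which is exactly the hypothesis of Theorem~\ref{Magni derived equivalence}; there is nothing to repair and no replacement $n'$ is needed---this parity is precisely why the result is restricted to dimensions $4d$. Second, the deformation in your Step two takes place entirely on the Kummer side: one deforms $\mathcal{F}^K$ (more precisely its Azumaya algebra $\mathcal{E}nd(\mathcal{F}^K)$) directly over $\mathfrak{M}^0_\psi$ via Proposition~\ref{Markman conditions for deformation of Azumaya algebras} and Proposition~\ref{source of algebraic cycles}. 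There is no need to track $G^2$-equivariance of the deformed sheaf or to ``conjugate under BKR back'' on the deformed pair; BKR is invoked only once, at the initial point, to produce $\mathcal{F}^K$ and to establish its semiregularity (Proposition~\ref{traceless endomorphisms vanish}, Corollary~\ref{semiregular sheaf}). Third, the monodromy fallback in your final sentence is unnecessary and not how the argument runs: the paper shows that $\kappa(\mathcal{F}^K)$ remains of Hodge type under \emph{every} K\"ahler deformation in $\mathfrak{M}^0_\psi$, and that any two points of $\mathfrak{M}^0_\psi$ are connected by a generic twistor path, so the (possibly twisted) sheaf extends to every fiber and $\kappa$ is carried by parallel transport throughout.
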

\begin{Rem}
For the purposes of this section, as we always work over $\mathbb{C}$, we refer to Conjecture \ref{LSC} as \textit{the} Lefschetz standard conjecture (LSC). 
\end{Rem}

\subsection{Derived equivalences and the LSC}
In this section we describe a series of results of Markman, Verbitsky, and Taelman and how they are relevant to our problem. Throughout this section we assume that $X$ and $Y$ are deformation equivalent IHSMs each of dimension $2n$.

\begin{Def}
    A Hodge isometry 
    $$
    f: H^*(X, \mathbb{Q})\longrightarrow H^*(Y, \mathbb{Q})
    $$
    with respect to the Mukai pairings on $H^*(X, \mathbb{Q})$ and $H^*(Y, \mathbb{Q})$ is said to be \textit{degree-reversing} if for each $k\leq 2n$, the image of $H^k(X, \mathbb{Q})$ under $f$ is contained in $H^{4n-k}(Y, \mathbb{Q})$.
\end{Def}

Let $\mathcal{F}$ be a sheaf of positive rank in the bounded derived category of coherent sheaves $D^b(X\times Y)$ and assume that $\mathcal{F}$ is the Fourier-Mukai kernel of an equivalence of triangulated categories
\begin{equation}
\Phi_{\mathcal{F}}: D^b(X)\longrightarrow D^b(Y).
\end{equation}

Recall the definition of the characteristic class $\kappa$ from equation (\ref{twisted kappa}). We denote by 
\begin{equation}\label{phi kappa}
    \phi_{\kappa}: = [\kappa(\mathcal{F})\sqrt{td_{X\times Y}}]_*: H^*(X, \mathbb{Q})\longrightarrow H^*(Y, \mathbb{Q})
\end{equation}
the Hodge isometry associated to $\kappa(\mathcal{F})\sqrt{td_{X\times Y}}$. It will turn out that $\phi_{\kappa}$ is a degree-reversing Hodge-isometry up to sign.

\begin{Rem}
    Let us remark how to compute the isometry $\phi_{\kappa}$: Let $\pi_X$ and $\pi_Y$ denote the projections from the product $X\times Y$. Let $\lambda_X\in H^2(X, \mathbb{Q})$ and $\lambda_Y\in H^2(Y, \mathbb{Q})$ be classes such that $c_1(\mathcal{F}) = \pi^*_X(\lambda_X)+\pi^*_Y(\lambda_Y)$, and let $r>0$ be the rank of $\mathcal{F}$. Then 
    $$
    \phi_{\kappa}(-) = \left[ch(\mathcal{F})\,\mathrm{exp}\left(\frac{-c_1(\mathcal{F})}{r}\right)\sqrt{td_{X\times Y}}\right]_*(-) = \mathrm{exp}\left(\frac{-\lambda_Y}{r}\right)\cup [ch(\mathcal{F})\sqrt{td_{X\times Y}}]_*\left(\mathrm{exp}\left(\frac{-\lambda_Y}{r}\right)\cup (-)\right).
    $$

    Let us denote 
    \begin{equation}
        B_{\lambda}: = \mathrm{exp}(\lambda), 
    \end{equation}
    for $\lambda\in H^2(X, \mathbb{Q})$, so that we may write 
    \begin{equation}\label{description of phi kappa}
        \phi_{\kappa} = B_{-\lambda_{Y}/r}\, \Phi_{\mathcal{F}}\, B_{-\lambda_{X}/r}.
    \end{equation}
\end{Rem}

\begin{Def}\label{LLV lattice}
    Let $X$ be an IHSM. The \textit{rational LLV lattice} is the graded lattice
    \begin{equation}
    \tilde{H}(X, \mathbb{Q}): = \mathbb{Q}\alpha\, \oplus H^2(X, \mathbb{Q})\, \oplus \mathbb{Q}\beta,
    \end{equation}
    equipped with an intersection pairing $(\, , \, )$ such that $\alpha$ and $\beta$ are isotropic and orthogonal to $H^2(X, \mathbb{Q})$ and $(\alpha, \beta) = -1$. Furthermore, $(\, , \,)$ restricts to $H^2(X, \mathbb{Q})$ as the BBF form. The grading is given by assigning $\alpha$ to have degree $-2$, $H^2(X, \mathbb{Q})$ to have degree $0$, and $\beta$ to have degree $2$.
\end{Def}

Applying results of Verbitsky and Taelman, Markman associates to $\phi_{\kappa}$ an isometry
\begin{equation}
\tilde{\phi}_{\kappa}: \tilde{H}(X, \mathbb{Q})\longrightarrow \tilde{H}(Y, \mathbb{Q}).
\end{equation}
We devote the next subsection to describing this association.

\subsection{The LLV algebra and isometries of the Mukai lattice}

For a smooth complex projective variety $X$, the Hard Lefschetz theorem implies that for any ample class $\lambda\in H^2(X, \mathbb{Q})\cap H^{1, 1}(X)$ determines a Lie algebra $\mathfrak{g}_{\lambda}\subset \mathrm{End}(H^*(X, \mathbb{Q}))$, which is isomorphic to $\mathfrak{sl}_2$. The Looijenga-Lunts-Verbitsky (LLV) Lie algebra is generated by all algebras $\mathfrak{g}_{\lambda}$ for $\lambda\in H^2(X, \mathbb{Q})$ satisfying the assumptions of the Hard Lefschetz theorem. We write $\mathfrak{g}(X)$ for the LLV Lie algebra of $X$. Let us from now on assume that $X$ is an IHSM. 
\begin{Prop}[{\cite{Verbitsky1}}, \cite{LL}]\label{Graded Lie algebra isomorphism}
    There is a unique graded isomorphism of Lie algebras
    $$
    \mathfrak{g}(X)\cong \mathfrak{so}(\tilde{H}(X, \mathbb{Q})).
    $$
\end{Prop}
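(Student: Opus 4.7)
The plan is to build the isomorphism by matching generators and verifying commutation relations. First, for every $\lambda \in H^2(X, \mathbb{Q})$ with $q_X(\lambda) > 0$, the Hard Lefschetz theorem produces a unique adjoint operator $\Lambda_\lambda$ fitting into an $\mathfrak{sl}_2$-triple $(L_\lambda, h, \Lambda_\lambda)$, where $h$ acts by $(k - 2n)\cdot \mathrm{Id}$ on $H^k(X, \mathbb{Q})$. By construction $\mathfrak{g}(X)$ is the Lie subalgebra of $\mathrm{End}(H^*(X, \mathbb{Q}))$ generated by all such triples, and since $L_\lambda$ is linear in $\lambda$, this extends to a $\mathbb{Q}$-linear family of generators indexed by $H^2(X, \mathbb{Q})$.

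The heart of the argument is the commutator computation, for which the essential input is the Fujiki relation (\ref{BBF form}). A direct calculation (due originally to Verbitsky) shows that $[L_\lambda, L_\mu] = 0$, $[\Lambda_\lambda, \Lambda_\mu] = 0$, and that $[L_\lambda, \Lambda_\mu]$ is a degree-zero endomorphism whose restriction to $H^2(X, \mathbb{Q})$ acts as $\xi \mapsto q_X(\mu, \xi)\lambda - q_X(\lambda, \xi)\mu$, modulo a multiple of $h$ determined by $q_X(\lambda, \mu)$. These are precisely the commutation relations of the generators of $\mathfrak{so}(\tilde{H}(X, \mathbb{Q}))$ under its natural grading: the degree-$(+2)$ subspace consists of the skew endomorphisms $\phi_\lambda$ sending $\alpha \mapsto \lambda$, $\lambda' \mapsto q_X(\lambda, \lambda')\beta$, and $\beta \mapsto 0$; the degree-$(-2)$ subspace consists of the duals $\phi^*_\lambda$; and the degree-zero part is $\mathfrak{so}(H^2(X, \mathbb{Q})) \oplus \mathbb{Q} h$.

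With these relations in hand, I would define a graded map $\Phi \colon \mathfrak{so}(\tilde{H}(X, \mathbb{Q})) \to \mathfrak{g}(X)$ by $\phi_\lambda \mapsto L_\lambda$ and $\phi^*_\lambda \mapsto \Lambda_\lambda$, extending via brackets to the degree-zero part. The commutator computation above guarantees that $\Phi$ respects the Lie bracket, and surjectivity is immediate from the definition of $\mathfrak{g}(X)$. The main obstacle is injectivity, and this is where the general structure theory does the work for us: since $\dim \tilde{H}(X, \mathbb{Q}) = b_2(X) + 2 \geq 5$ and $q_X$ is non-degenerate of mixed signature, the Lie algebra $\mathfrak{so}(\tilde{H}(X, \mathbb{Q}))$ is simple, so its only proper ideal is zero. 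Since $\Phi$ is manifestly nonzero, it is injective, completing the proof that $\Phi$ is an isomorphism. Uniqueness of the graded isomorphism follows because the degree-$(+2)$ piece of $\mathfrak{g}(X)$ is canonically identified with $H^2(X, \mathbb{Q})$ via $\lambda \mapsto L_\lambda$, and any graded automorphism of $\mathfrak{so}(\tilde{H}(X, \mathbb{Q}))$ compatible with this identification is forced to be the identity by the bracket relations computed above.
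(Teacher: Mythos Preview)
The paper does not supply its own proof of this proposition; it is quoted directly from the references \cite{Verbitsky1} and \cite{LL}, so there is no in-paper argument to compare against. Your outline is essentially the original Looijenga--Lunts/Verbitsky argument: identify the graded pieces $\mathfrak{g}_{\pm 2}$ with $H^2(X,\mathbb{Q})$ via $\lambda\mapsto L_\lambda$ and $\lambda\mapsto\Lambda_\lambda$, compute the key bracket $[L_\lambda,\Lambda_\mu]$ to pin down $\mathfrak{g}_0\cong\mathfrak{so}(H^2(X,\mathbb{Q}))\oplus\mathbb{Q}h$, and then invoke simplicity of $\mathfrak{so}(\tilde H(X,\mathbb{Q}))$ to conclude.

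One point deserves more care. You write that ``the Hard Lefschetz theorem'' produces $\Lambda_\lambda$ for every $\lambda$ with $q_X(\lambda)>0$, but Hard Lefschetz as usually stated applies only to K\"ahler (or ample) classes. What is actually needed here---and is a genuine theorem of Verbitsky specific to IHSMs---is that \emph{every} class $\lambda\in H^2(X,\mathbb{Q})$ with $q_X(\lambda)\neq 0$ has the Lefschetz property. Without this you would only obtain the subalgebra generated by the K\"ahler cone, which a priori could be smaller; in particular your linear extension ``since $L_\lambda$ is linear in $\lambda$'' is fine for $L_\lambda$ but the existence of $\Lambda_\lambda$ for non-K\"ahler $\lambda$ requires this additional input. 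Once that is in place, your well-definedness, surjectivity, and simplicity arguments go through as written.
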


\begin{Def}
    The \textit{Verbitsky component} $SH^*(X, \mathbb{Q})\subset H^*(X, \mathbb{Q})$ is the subalgebra generated by $H^2(X, \mathbb{Q})$. 
\end{Def}

\begin{Prop}[{\cite{Verbitsky1}}]\label{SH(X) irreducible}
    The Verbitsky component $SH(X, \mathbb{Q})$ is an irreducible $\mathfrak{g}(X)$-submodule of $H^*(X, \mathbb{Q})$ appearing with multiplicity one. 
\end{Prop}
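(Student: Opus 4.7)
The plan is to realize $SH^*(X, \mathbb{Q})$ as the $\mathfrak{g}(X)$-submodule of $H^*(X, \mathbb{Q})$ generated by the fundamental class $\mathbf{1} \in H^0(X, \mathbb{Q})$, and then apply standard highest-weight theory for the semisimple Lie algebra $\mathfrak{so}(\tilde{H}(X, \mathbb{Q}))$ (identified with $\mathfrak{g}(X)$ via Proposition \ref{Graded Lie algebra isomorphism}) to deduce irreducibility and multiplicity one simultaneously.

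First I would verify that $SH^*(X, \mathbb{Q})$ is preserved by the full LLV algebra. Closure under the Lefschetz operators $L_\lambda$ for $\lambda \in H^2(X, \mathbb{Q})$ and under the grading operator $h$ is immediate from the definition. The substantive step is closure under the dual Lefschetz operators $\Lambda_\mu$; here I would argue inductively on cohomological degree, computing $\Lambda_\mu(\lambda_1 \cdots \lambda_k)$ by repeatedly applying the $\mathfrak{sl}_2$-bracket relations (\ref{adjoint lefschetz operator}), so that each commutator $[\Lambda_\mu, L_{\lambda_i}]$ lies in the weight-zero subalgebra $\mathfrak{g}^0$ and acts on $H^2(X, \mathbb{Q})$ through the natural $\mathfrak{so}(H^2(X, \mathbb{Q}))$-representation appearing inside $\mathfrak{so}(\tilde{H}(X, \mathbb{Q}))$. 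Since $\mathfrak{g}(X)$ is generated by $\{L_\lambda, \Lambda_\mu : \lambda, \mu \in H^2\}$ together with $h$, this suffices to make $SH^*(X, \mathbb{Q})$ a $\mathfrak{g}(X)$-submodule.

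Next, I decompose $\mathfrak{g}(X) = \mathfrak{g}^{-2} \oplus \mathfrak{g}^{0} \oplus \mathfrak{g}^{2}$ by $h$-weight, with $\mathfrak{g}^{2} = \operatorname{span}\{L_\lambda\}$ and $\mathfrak{g}^{-2} = \operatorname{span}\{\Lambda_\mu\}$, and observe that $\mathbf{1}$ has $h$-weight $-n$ (where $n = \dim X$), the minimal $h$-weight on $H^*(X, \mathbb{Q})$ since $H^k = 0$ for $k < 0$. In particular, $\mathbf{1}$ is annihilated by $\mathfrak{g}^{-2}$ for degree reasons, so it is an extremal-weight vector. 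By the Poincar\'{e}--Birkhoff--Witt theorem applied to the graded decomposition above, the submodule $U(\mathfrak{g}(X)) \cdot \mathbf{1}$ is spanned by iterated applications of the raising operators $L_\lambda$ to $\mathbf{1}$, namely by the classes $\lambda_1 \cdots \lambda_k$, which is precisely $SH^*(X, \mathbb{Q})$.

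Finally, both irreducibility and multiplicity one follow from the observation that the $h$-weight space of weight $-n$ in $H^*(X, \mathbb{Q})$ is the one-dimensional space $H^0(X, \mathbb{Q}) = \mathbb{Q} \cdot \mathbf{1}$, and by minimality of the weight every element of this space is automatically annihilated by $\mathfrak{g}^{-2}$. Since $\mathfrak{g}(X)$ is semisimple, $H^*(X, \mathbb{Q})$ decomposes as a direct sum of irreducibles, and the multiplicity of any one irreducible equals the dimension of its extremal-weight space in the ambient module. Hence $SH^*(X, \mathbb{Q})$ is itself irreducible and occurs with multiplicity one. The main obstacle in this plan is the first step: verifying closure of $SH^*(X, \mathbb{Q})$ under $\Lambda_\mu$ is not formal, since these operators are defined via the Poincar\'{e} pairing on all of $H^*(X, \mathbb{Q})$ and do not a priori respect the subalgebra generated by $H^2$. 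In practice one either carries out the inductive bracket computation explicitly using the Fujiki relation (\ref{BBF form}), or invokes an independent construction of $SH^*(X, \mathbb{Q})$ as a quotient of $\operatorname{Sym}^\bullet(H^2(X, \mathbb{Q}))$ by the Fujiki-type relations, on which the $\mathfrak{so}(\tilde{H}(X, \mathbb{Q}))$-action may be written down directly and matched to the restriction from $H^*(X, \mathbb{Q})$.
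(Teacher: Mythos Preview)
The paper does not supply its own proof of this proposition: it is stated with a citation to Verbitsky and used as a black box, so there is no in-paper argument to compare against. Your sketch is essentially the standard proof one finds in the Looijenga--Lunts/Verbitsky literature: identify $\mathbf{1}\in H^0$ as a lowest-weight vector for the grading, observe that the raising part $\mathfrak{g}^{2}$ consists of the $L_\lambda$, so the cyclic submodule $U(\mathfrak{g}(X))\cdot\mathbf{1}$ equals $SH^*(X,\mathbb{Q})$, and then read off irreducibility and multiplicity one from the one-dimensionality of $H^0$.

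Two remarks on your write-up. First, the step you yourself flag as the obstacle---closure of $SH^*(X,\mathbb{Q})$ under the $\Lambda_\mu$---is genuinely the heart of the matter and is not a formality. The clean way around it is not the inductive bracket computation you outline, but rather to use Verbitsky's explicit presentation $SH^*(X,\mathbb{Q})\cong \mathrm{Sym}^\bullet H^2(X,\mathbb{Q})/\langle \lambda^{n+1}:q_X(\lambda)=0\rangle$ together with the identification of Proposition~\ref{Graded Lie algebra isomorphism}, under which the $\mathfrak{so}(\tilde H)$-action on the right-hand side is visible and the image in $H^*(X,\mathbb{Q})$ is automatically a submodule; this is in effect what you suggest in your final sentence. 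Second, your multiplicity-one deduction is correct but slightly underjustified as phrased: the point is that in the semisimple decomposition of $H^*(X,\mathbb{Q})$ only one irreducible summand can meet the weight $-n$ space (since that space is one-dimensional), and that summand must equal $U(\mathfrak{g}(X))\cdot\mathbf{1}=SH^*(X,\mathbb{Q})$.
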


\begin{Prop}[{\cite[Proposition 3.5.]{Taelman}}]\label{Unique module isomorphism}
    There is a unique isomorphism of $\mathfrak{so}(\tilde{H}(X, \mathbb{Q}))$-modules 
    \begin{equation}
        \Psi_X: SH^*(X, \mathbb{Q})[2n]\longrightarrow \mathrm{Sym}^n\tilde{H}(X, \mathbb{Q}), 
    \end{equation}
    such that $\Psi_X(1) = \alpha^n/n!$.
\end{Prop}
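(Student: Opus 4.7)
The plan is to deduce the statement from the representation theory of $\mathfrak{so}(\tilde H(X,\mathbb{Q}))$, using only the irreducibility in Proposition \ref{SH(X) irreducible} and the graded isomorphism $\mathfrak{g}(X)\cong \mathfrak{so}(\tilde H(X,\mathbb{Q}))$ of Proposition \ref{Graded Lie algebra isomorphism}. The argument has two halves: identifying lowest weight vectors on both sides, then building $\Psi_X$ as the unique module map compatible with these vectors.

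First I would analyze the vectors $1\in SH^*(X,\mathbb{Q})[2n]$ and $\alpha^n/n!\in \mathrm{Sym}^n\tilde H(X,\mathbb{Q})$ as lowest weight vectors for the same choice of Cartan in $\mathfrak{so}(\tilde H(X,\mathbb{Q}))$. The grading operator $h$ lies in this Cartan subalgebra, and acts on $SH^k(X,\mathbb{Q})[2n]$ by $k-2n$, so the class $1\in SH^0(X,\mathbb{Q})$ lies in the $h$-weight space of minimal weight $-2n$. Moreover, $\Lambda_f(1)=0$ for every $f\in H^2(X,\mathbb{Q})$, and the operators $\{\Lambda_f\}_{f\in H^2}$ together with the Cartan generate the lowering Borel $\mathfrak{b}^-\subset \mathfrak{g}(X)$ under the identification of Proposition \ref{Graded Lie algebra isomorphism}. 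Hence $1$ is a lowest weight vector of weight $-2n$. On the other side, $\alpha\in\tilde H(X,\mathbb{Q})$ has $h$-weight $-2$ and is isotropic, so $\alpha^n/n!$ has $h$-weight $-2n$ and is annihilated by all lowering operators in $\mathfrak{so}(\tilde H(X,\mathbb{Q}))$: these lower the grading of $\tilde H$ and hence vanish on any tensor all of whose factors are $\alpha$.

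Next I would construct $\Psi_X$ by the universal property of cyclic modules generated by a lowest weight vector. By Proposition \ref{SH(X) irreducible}, $SH^*(X,\mathbb{Q})[2n]$ is irreducible and hence generated under $U(\mathfrak{g}(X))$ by $1$. I define
\[
\Psi_X\bigl(u\cdot 1\bigr) \;:=\; u\cdot (\alpha^n/n!), \qquad u\in U(\mathfrak{g}(X)),
\]
and must check that this is well defined, i.e.\ that $\mathrm{Ann}_{U(\mathfrak{g}(X))}(1) \subseteq \mathrm{Ann}_{U(\mathfrak{g}(X))}(\alpha^n/n!)$. This reduces to the standard fact that the annihilator of a lowest weight vector in an integrable $\mathfrak{so}$-module is determined by its weight together with the raising/Serre relations, and both $1$ and $\alpha^n/n!$ satisfy the same relations by the previous paragraph. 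Injectivity of $\Psi_X$ then follows from irreducibility of $SH^*(X,\mathbb{Q})[2n]$; surjectivity onto $\mathrm{Sym}^n\tilde H(X,\mathbb{Q})$ amounts to identifying the $\mathfrak{so}(\tilde H)$-submodule cyclically generated by $\alpha^n/n!$ with all of $\mathrm{Sym}^n\tilde H$ (interpreted in the sense used by Taelman), which is a character/weight computation.

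Finally, uniqueness is immediate from Schur's lemma: if $\Psi_X$ and $\Psi_X'$ both satisfy the normalization, then $\Psi_X-\Psi_X'$ is $\mathfrak{so}(\tilde H(X,\mathbb{Q}))$-equivariant and kills the generator $1$; its kernel is therefore a submodule containing $1$, hence equal to the whole of $SH^*(X,\mathbb{Q})[2n]$ by irreducibility, forcing $\Psi_X=\Psi_X'$.

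The main obstacle is the well-definedness step, i.e.\ verifying that the annihilator of $1\in SH^*(X,\mathbb{Q})[2n]$ lies inside the annihilator of $\alpha^n/n!\in \mathrm{Sym}^n\tilde H(X,\mathbb{Q})$. Equivalently, one must show that the cyclic submodule of $\mathrm{Sym}^n\tilde H(X,\mathbb{Q})$ generated by $\alpha^n/n!$ is an irreducible $\mathfrak{so}(\tilde H(X,\mathbb{Q}))$-module with lowest weight vector of weight $-2n$, isomorphic to $SH^*(X,\mathbb{Q})[2n]$. I would expect to do this either via the classification of irreducible integrable highest-weight representations of $\mathfrak{so}(\tilde H(X,\mathbb{Q}))$, or by exhibiting a matching set of relations (e.g.\ Fujiki-type relations on $SH^*(X,\mathbb{Q})$ versus the natural $\mathfrak{so}$-covariant relations on $\mathrm{Sym}^n\tilde H$). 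All the remaining work in the proof is formal once this comparison of annihilators is in hand.
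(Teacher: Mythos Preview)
The paper does not supply its own proof of this proposition; it is quoted verbatim from Taelman and used as a black box. So there is no ``paper's proof'' to compare against. Your argument via lowest weight vectors and Schur's lemma is the standard one and is essentially how the result is established in the cited reference.

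There is, however, a genuine issue in your surjectivity step that you should not paper over with the parenthetical ``(interpreted in the sense used by Taelman)''. The map $\Psi_X$ is \emph{not} surjective onto $\mathrm{Sym}^n\tilde H(X,\mathbb{Q})$ for $n\geq 2$: the cyclic $\mathfrak{so}(\tilde H)$-submodule generated by $\alpha^n/n!$ is the irreducible harmonic piece $S_{[n]}\tilde H(X,\mathbb{Q}) = \ker\Delta$, which is a proper submodule. A dimension count already shows this, since $\mathrm{Sym}^n\tilde H$ decomposes under $\mathfrak{so}(\tilde H)$ as $\bigoplus_{k\leq n/2} S_{[n-2k]}\tilde H$. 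The paper itself makes this explicit immediately afterward in Lemma~\ref{exact sequence of modules} and the Remark following Definition of $S_{[n]}$, where $\Psi_X$ appears as the injection in a short exact sequence with cokernel $\mathrm{Sym}^{n-2}\tilde H$. Thus the word ``isomorphism'' in the stated proposition should be read as ``isomorphism onto its image $S_{[n]}\tilde H(X,\mathbb{Q})$'' (equivalently, injective module homomorphism into $\mathrm{Sym}^n\tilde H$), and your well-definedness/annihilator comparison then reduces to the clean statement that both $SH^*(X,\mathbb{Q})[2n]$ and $S_{[n]}\tilde H(X,\mathbb{Q})$ are the irreducible $\mathfrak{so}(\tilde H)$-module of lowest weight $-2n$. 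Once you say it that way, the ``main obstacle'' you flag disappears: there is no need to match annihilator ideals by hand, since two irreducible finite-dimensional modules with the same lowest weight are isomorphic, and Schur pins down the isomorphism uniquely given the normalization $1\mapsto \alpha^n/n!$.
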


Let 
\begin{equation}
    \Delta: \mathrm{Sym}^n\tilde{H}(X, \mathbb{Q})\longrightarrow \mathrm{Sym}^{n-2}\tilde{H}(X, \mathbb{Q})
\end{equation}
denote the Laplacian operator which contracts elements of $\mathrm{Sym}^n\tilde{H}(X, \mathbb{Q})$ against $q_X$ as follows: 
$$
\Delta: x_1 \cdots x_n\mapsto \sum_{i<j} q_X(x_i, x_j) x_1\cdots \widehat{x}_i\cdots \widehat{x}_j
\cdots x_n,$$
where $\widehat{x}_k$ denotes omission of the element. 

\begin{Lem}\label{exact sequence of modules}
    There is an exact sequence of $\mathfrak{g}(X)$-modules
    $$
    0\rightarrow SH^*(X, \mathbb{Q})[2n]\xrightarrow{\Psi_X} \mathrm{Sym}^n\tilde{H}(X, \mathbb{Q})\xrightarrow{\Delta} \mathrm{Sym}^{n-2}\tilde{H}(X, \mathbb{Q})\rightarrow 0
    $$
\end{Lem}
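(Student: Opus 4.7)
The plan is to realize the displayed sequence as the classical harmonic-decomposition sequence for symmetric powers of the nondegenerate quadratic space $(\tilde H(X,\mathbb{Q}), q_X)$, pinning down the image of $\Psi_X$ by means of the irreducibility stated in Proposition \ref{SH(X) irreducible}. As a preliminary, $\Delta$ is a homomorphism of $\mathfrak g(X)$-modules: under the identification $\mathfrak g(X) \cong \mathfrak{so}(\tilde H(X,\mathbb{Q}))$ of Proposition \ref{Graded Lie algebra isomorphism}, elements of $\mathfrak g(X)$ act on $\tilde H(X,\mathbb{Q})$ by infinitesimal isometries of $q_X$, so their diagonal extensions to $\mathrm{Sym}^n\tilde H(X,\mathbb{Q})$ commute with the contraction against $q_X$ that defines $\Delta$; hence $\ker(\Delta)$ is a $\mathfrak g(X)$-submodule. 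Injectivity of $\Psi_X$ is already part of Proposition \ref{Unique module isomorphism}.

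I would next verify $\mathrm{im}(\Psi_X) \subseteq \ker(\Delta)$. Since $\alpha$ is isotropic and orthogonal to $H^2(X,\mathbb{Q})$ by Definition \ref{LLV lattice}, a direct computation gives
$$
\Delta(\alpha^n) \;=\; \binom{n}{2}\, q_X(\alpha,\alpha)\, \alpha^{n-2} \;=\; 0,
$$
so $\Delta(\Psi_X(1)) = 0$ by Proposition \ref{Unique module isomorphism}. Since $SH^*(X,\mathbb{Q})[2n]$ is irreducible as a $\mathfrak g(X)$-module by Proposition \ref{SH(X) irreducible} and $\Psi_X$ is $\mathfrak g(X)$-equivariant, $\mathrm{im}(\Psi_X)$ equals the $\mathfrak g(X)$-submodule generated by $\alpha^n/n!$, and therefore sits inside the submodule $\ker(\Delta)$.

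To finish, I would invoke the standard harmonic decomposition for symmetric powers of a nondegenerate quadratic space $V$ of dimension $d \geq 3$. The commutator identity $[\Delta,\, m_{q_V}] = (2k+d)\cdot \mathrm{id}$ on $\mathrm{Sym}^k V$ yields, by induction on $n$, a rational $\mathfrak{so}(V)$-stable splitting
$$
\mathrm{Sym}^n V \;=\; \ker(\Delta) \,\oplus\, m_{q_V}\bigl(\mathrm{Sym}^{n-2} V\bigr),
$$
in which $\ker(\Delta)$ is an irreducible $\mathfrak{so}(V)$-module of harmonic tensors and $\Delta$ restricts to a linear isomorphism of the second summand onto $\mathrm{Sym}^{n-2} V$; in particular $\Delta$ is surjective. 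In our setting $\dim\tilde H(X,\mathbb{Q}) = b_2(X) + 2 \geq 5$, so this applies, and irreducibility of $\ker(\Delta)$ upgrades the inclusion of the previous paragraph to an equality $\mathrm{im}(\Psi_X) = \ker(\Delta)$, completing the sequence.

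The main obstacle is the harmonic decomposition itself, but the commutator identity $[\Delta,\, m_{q_V}] = (2k+d)\cdot\mathrm{id}$ is a routine polynomial calculation at the level of symmetric tensors and is defined over $\mathbb{Q}$, so it yields a rational section of $\Delta$ at each degree. Irreducibility of $\ker(\Delta)$ descends from the classical complex theory of spherical harmonics: since $\ker(\Delta)$ is rationally defined, any proper nontrivial rational $\mathfrak{so}(V)$-submodule would persist after extension of scalars to $\mathbb{C}$, contradicting the complex irreducibility of the harmonic module.
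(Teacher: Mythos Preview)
Your proof is correct and follows essentially the same approach as the paper's: both rely on the injectivity of $\Psi_X$ from Proposition~\ref{Unique module isomorphism}, the fact that $\Delta$ is a surjective $\mathfrak g(X)$-module map, and an irreducibility argument to force $\mathrm{im}(\Psi_X)=\ker(\Delta)$. The paper's proof is extremely terse (it simply asserts surjectivity of $\Delta$ and invokes irreducibility of $SH^*(X,\mathbb{Q})$), whereas you actually justify each step --- in particular you verify $\Delta(\alpha^n)=0$ to place the image inside the kernel, and you supply the harmonic decomposition via the commutator $[\Delta,m_{q_V}]=(2k+d)\mathrm{id}$ to prove surjectivity of $\Delta$ and irreducibility of $\ker(\Delta)$. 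The one minor asymmetry is that the paper appeals to irreducibility of the \emph{source} $SH^*(X,\mathbb{Q})$ while you appeal to irreducibility of the \emph{target} $\ker(\Delta)$; either suffices once the inclusion $\mathrm{im}(\Psi_X)\subseteq\ker(\Delta)$ is established, and your route has the virtue of being self-contained.
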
 
\begin{proof}
    This follows from Proposition (\ref{Unique module isomorphism}), the fact that $\Delta$ is a surjective $\mathfrak{g}(X)$-module morphism, and the fact that $SH^*(X, \mathbb{Q})$ is an irreducible $\mathfrak{g}(X)$-module. 
\end{proof}

\begin{Def}
    Let 
    \begin{equation}
        S_{[n]}\tilde{H}(X, \mathbb{Q}): = \mathrm{ker}[\mathrm{Sym}^n\tilde{H}(X, \mathbb{Q})\xrightarrow{\Delta} \mathrm{Sym}^{n-2}\tilde{H}(X, \mathbb{Q})].
    \end{equation}
\end{Def}

\begin{Rem}
    Note that elements of $S_{[n]}\tilde{H}(X, \mathbb{Q})$ are generated by $nth$-powers of isotropic elements of $\tilde{H}(X, \mathbb{Q})$. Note also that as a result of Lemma (\ref{exact sequence of modules}), the image of $\Psi_X$ is precisely $S_{[n]}\tilde{H}(X, \mathbb{Q})$, and we shall therefore refer to $\Psi_X$ as the map 
    $$
    \Psi_X: SH^*(X, \mathbb{Q})[2n]\longrightarrow S_{[n]}\tilde{H}(X, \mathbb{Q}).
    $$
\end{Rem}

\begin{Th}[{\cite[Theorem A]{Taelman}}]\label{Taelman A}
    Let $\Phi_{\mathcal{E}}: D^b(X)\rightarrow D^b(Y)$ be a derived equivalence of deformation equivalent IHSMs and let $\phi_{v(\mathcal{E})}$ denote the rational Hodge isometry induced by convulution with the Mukai vector $v(\mathcal{E})$. Then there exists a canonical isomorphism of Lie algebras 
    $$
    \Phi^{\mathfrak{g}}: \mathfrak{g}(X)\longrightarrow \mathfrak{g}(Y)
    $$
    such that $\phi_{v(\mathcal{E})}$ is equivariant with respect to $\Phi^{\mathfrak{g}}$.
\end{Th}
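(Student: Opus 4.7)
The plan is to convert the problem into a purely lattice-theoretic question on the rational LLV lattice. Under the canonical identification $\mathfrak{g}(X) \cong \mathfrak{so}(\tilde{H}(X,\mathbb{Q}))$ from Proposition \ref{Graded Lie algebra isomorphism}, producing $\Phi^{\mathfrak{g}}$ is equivalent to producing an isometry $\tilde{\phi}: \tilde{H}(X,\mathbb{Q}) \to \tilde{H}(Y,\mathbb{Q})$; the Lie algebra isomorphism will then be conjugation by $\tilde{\phi}$, and the claimed equivariance of $\phi_{v(\mathcal{E})}$ with respect to $\Phi^{\mathfrak{g}}$ will be built into the construction of $\tilde{\phi}$. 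So the whole problem reduces to extending (a renormalization of) $\phi_{v(\mathcal{E})}$ to the one-dimensional isotropic extensions $\mathbb{Q}\alpha$ and $\mathbb{Q}\beta$ on each side in a way that is compatible with the Verbitsky embedding.

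First I would show that $\phi_{v(\mathcal{E})}$ restricts to an isometry $SH^*(X,\mathbb{Q}) \xrightarrow{\sim} SH^*(Y,\mathbb{Q})$ of Verbitsky components. The natural strategy is a deformation argument: the Fourier–Mukai kernel $\mathcal{E}$ spreads out over the simultaneous Hodge locus in the deformation space of $X \times Y$, and at a very general point of this locus the Picard rank drops so drastically that the restriction of $\phi_{v(\mathcal{E})}$ to $H^2$ is forced, by the degree-reversing property and the Mukai-isometry property summarized in equation (\ref{description of phi kappa}), to send $H^2(X,\mathbb{Q})$ into $H^2(Y,\mathbb{Q})$ up to the lines $\mathbb{Q}\alpha_Y$ and $\mathbb{Q}\beta_Y$. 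Passing back to the original point by parallel transport then yields preservation of the subalgebra generated by $H^2$, i.e.\ of $SH^*$. With this in hand, I transport $\phi_{v(\mathcal{E})}|_{SH^*}$ through the $\mathfrak{g}$-equivariant isomorphism $\Psi_X$ of Proposition \ref{Unique module isomorphism} and its analog $\Psi_Y$ to obtain an isometry of $S_{[n]}\tilde{H}$'s. Because $S_{[n]}\tilde{H}(X,\mathbb{Q})$ is generated by $n$-th powers of isotropic vectors in $\tilde{H}(X,\mathbb{Q})$, and such an isometry must preserve the variety of $n$-th powers of isotropic vectors, a linear algebra argument (unique factorization of $n$-th powers up to sign) extracts the desired $\tilde{\phi}:\tilde{H}(X,\mathbb{Q}) \xrightarrow{\sim} \tilde{H}(Y,\mathbb{Q})$, well-defined up to an overall sign which can be normalized using orientability of the positive cone.

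The main obstacle is the preservation of the Verbitsky component: the multiplicity-one statement of Proposition \ref{SH(X) irreducible} would give this for free if $\phi_{v(\mathcal{E})}$ were already known to be $\mathfrak{g}$-equivariant, but equivariance is precisely what we are trying to prove, so we cannot invoke it circularly. The resolution requires that we exhibit enough rigidity outside of $\mathfrak{g}$-equivariance to constrain $\phi_{v(\mathcal{E})}$ to preserve the $SH^*$ summand; the most robust tool is the density of points in the moduli of Hodge-isometric pairs $(X,Y)$ at which the equivalence becomes transparent (for instance, points at which $X \cong Y$ and $\mathcal{E}$ degenerates to a shift of the structure sheaf of the diagonal or to a standard autoequivalence, where equivariance is immediate). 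Once density and continuity of the LLV action are established along such a deformation, the preservation propagates to every point of the moduli space, closing the loop. I would follow Taelman's original template for this density argument, invoking the deformation theory developed via Proposition \ref{TodaDeformations} and Theorem \ref{HuangTheorems} to ensure that $\mathcal{E}$ actually deforms across the Hodge locus.
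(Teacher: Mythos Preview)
The paper does not prove this theorem: it is quoted verbatim from Taelman and used as a black box, with no proof environment following the statement. So there is no ``paper's own proof'' to compare against; the relevant comparison is with Taelman's actual argument.

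Your proposed route is genuinely different from Taelman's, and it has a real gap. Taelman does \emph{not} use a deformation or density argument. His proof is purely categorical: a derived equivalence induces isomorphisms on Hochschild cohomology $HH^*$ and Hochschild homology $HH_*$, intertwining the cap-product action of the former on the latter. Under the (twisted) HKR isomorphisms these become $HT^*(X)$ acting on $H\Omega_*(X)\cong H^*(X,\mathbb{C})$, and the LLV algebra $\mathfrak{g}(X)$ sits inside this action in a way that is visibly transported by the functorial isomorphism. The equivariance of $\phi_{v(\mathcal{E})}$ then drops out immediately; no circularity arises because one never needs to know in advance that $\phi_{v(\mathcal{E})}$ preserves $SH^*$.

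Your plan inverts the logical order used in the paper: you try to build $\tilde\phi$ first and then define $\Phi^{\mathfrak g}$ as conjugation by it, whereas in the paper (and in Taelman) the functor $\tilde H$ is constructed \emph{after} Theorem~\ref{Taelman A} is in hand, precisely because the restriction to $SH^*$ (the Lemma following Theorem~\ref{Taelman A}) relies on the equivariance already established. Your proposed workaround---deforming $\mathcal E$ to a locus where it degenerates to a ``transparent'' autoequivalence like a shift of $\mathcal O_\Delta$---is not available in general: an arbitrary Fourier--Mukai kernel of positive rank need not deform to such a point, and nothing in Proposition~\ref{TodaDeformations} or Theorem~\ref{HuangTheorems} furnishes the required density of transparent points. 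So the circularity you correctly flag is not resolved by your density strategy; the Hochschild-functoriality argument is what actually breaks it.
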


We will express the analogous grading operator to $h_X$ on the Mukai lattice as  
\begin{equation}\label{tilde h}
\tilde{h}_X: \tilde{H}(X, \mathbb{Q})\rightarrow \tilde{H}(X, \mathbb{Q})
\end{equation}
which acts on $\alpha_X$ by $-2$ and on $\beta_X$ by $2$, and acts trivially on $H^2(X, \mathbb{Q})$.

Recall that a \textit{groupoid} is a category where every morphism is an isomorphism. Let $\mathcal{G}$ be the groupoid whose objects are pairs $(X, \epsilon)$ of an IHSM $X$ and an orientation $\epsilon$ of $H^2(X, \mathbb{Q})$, and whose morphisms are rational Hodge isometries with respect to the Mukai pairings on $X$ and $Y$. Such a rational Hodge isometry $\phi$ induces an adjoint isomorphism of Lie algebras 
\begin{equation}\label{adjoint Lie algebra isomorphism}
Ad_{\phi}: \mathrm{End}(H^*(X, \mathbb{Q}))\longrightarrow \mathrm{End}(H^*(Y, \mathbb{Q})),
\end{equation}
given by $Ad_{\phi}(\zeta) = \phi\zeta\phi^{-1}$.
\begin{Lem}
    An isometry $\phi$ restricts to an isometry $\phi_{SH}: SH^*(X, \mathbb{Q})\xrightarrow{\sim} SH^*(Y, \mathbb{Q})$.
\end{Lem}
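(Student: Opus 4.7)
My plan is to leverage the characterization of $SH^*(X, \mathbb{Q})$ in Proposition \ref{SH(X) irreducible} as the unique irreducible $\mathfrak{g}(X)$-submodule of $H^*(X, \mathbb{Q})$ of multiplicity one, and to show that $\phi$ intertwines the LLV actions so that this distinguished submodule is automatically preserved.

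First, I would establish that the adjoint $\mathrm{Ad}_\phi$ from (\ref{adjoint Lie algebra isomorphism}) restricts to a Lie algebra isomorphism $\mathfrak{g}(X) \xrightarrow{\sim} \mathfrak{g}(Y)$. By Verbitsky's description in Proposition \ref{Graded Lie algebra isomorphism}, both sides are canonically isomorphic to $\mathfrak{so}$ of the LLV lattice $\tilde{H}(-,\mathbb{Q})$, which is built out of the BBF pairing on $H^2$ together with a canonical hyperbolic plane generated by $\alpha$ and $\beta$. Since $\phi$ preserves both the Mukai pairing and the rational Hodge structure, it descends to a Hodge isometry $\tilde{\phi}: \tilde{H}(X, \mathbb{Q}) \to \tilde{H}(Y, \mathbb{Q})$; transport through $\mathfrak{so}$ yields a Lie algebra isomorphism which, by the functoriality of Verbitsky's construction, agrees with $\mathrm{Ad}_\phi$ on $\mathfrak{g}(X)$.

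With this LLV-compatibility in hand, $\phi$ becomes an $\mathrm{Ad}_\phi$-equivariant isomorphism of $\mathfrak{g}$-modules, so $\phi(SH^*(X, \mathbb{Q}))$ is an irreducible $\mathfrak{g}(Y)$-submodule of $H^*(Y, \mathbb{Q})$ isomorphic as a $\mathfrak{g}(Y)$-module to $SH^*(Y, \mathbb{Q})$ (using Proposition \ref{Unique module isomorphism} on both source and target to identify the abstract isomorphism class as $S_{[n]}\tilde{H}(Y, \mathbb{Q})$). The multiplicity-one conclusion of Proposition \ref{SH(X) irreducible} applied to $Y$ then forces $\phi(SH^*(X, \mathbb{Q})) = SH^*(Y, \mathbb{Q})$.

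The hard part will be the first step: rigorously establishing that $\mathrm{Ad}_\phi$ conjugates $\mathfrak{g}(X)$ to $\mathfrak{g}(Y)$ inside the respective full endomorphism algebras. This requires pinning down the naturality of Verbitsky's identification $\mathfrak{g}(X) \cong \mathfrak{so}(\tilde{H}(X, \mathbb{Q}))$ and verifying that the adjoint action on the distinguished generators (the Lefschetz operators $L_\alpha$ and their duals, for $\alpha \in H^2$) matches the orthogonal transformation induced by $\tilde{\phi}$. This is essentially the substance of the Verbitsky--Taelman theory invoked in Theorem \ref{Taelman A}; once the compatibility is secured for arbitrary morphisms of the groupoid $\mathcal{G}$, the remaining module-theoretic argument is formal.
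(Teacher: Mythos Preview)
Your overall strategy matches the paper's: invoke the fact that $\mathrm{Ad}_\phi$ carries $\mathfrak{g}(X)$ onto $\mathfrak{g}(Y)$, then use the irreducibility and multiplicity-one characterization of $SH^*$ from Proposition~\ref{SH(X) irreducible} to force $\phi(SH^*(X,\mathbb{Q})) = SH^*(Y,\mathbb{Q})$. The paper's proof does exactly this, citing Theorem~\ref{Taelman A} for the LLV-compatibility step and then appealing to irreducibility together with the restriction of the Mukai pairing.

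There is, however, a circularity in your proposed justification of the LLV-compatibility. You write that $\phi$ ``descends to a Hodge isometry $\tilde{\phi}: \tilde{H}(X,\mathbb{Q}) \to \tilde{H}(Y,\mathbb{Q})$'' and then want to transport this through $\mathfrak{so}$. But in the paper's framework (and Taelman's), the isometry $\tilde{\phi} = \tilde{H}(\phi)$ is \emph{defined} as $\tilde{H}(\Psi_Y \circ \phi_{SH} \circ \Psi_X^{-1})$, i.e.\ it is extracted from the restriction $\phi_{SH}$ to the Verbitsky component via the module isomorphism of Proposition~\ref{Unique module isomorphism}. So you cannot use $\tilde{\phi}$ to produce $\phi_{SH}$; the dependency runs the other way. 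There is no independent ``descent'' of an arbitrary Mukai isometry $H^*(X,\mathbb{Q}) \to H^*(Y,\mathbb{Q})$ to the LLV lattice $\tilde{H}$, since $\tilde{H}$ is neither a subquotient of $H^*$ nor functorially attached to it except through the $SH^*$-identification. You do recognize in your final paragraph that the real input is Theorem~\ref{Taelman A}, and once you cite that directly (as the paper does) the rest of your argument goes through.
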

\begin{proof}
    By Theorem \ref{Taelman A}, the adjoint isomorphism (\ref{adjoint Lie algebra isomorphism}) restricts to an isomophism $\mathfrak{g}(X)\cong \mathfrak{g}(X)$. The fact that $SH(X, \mathbb{Q})$ and $SH(Y, \mathbb{Q})$ are irreducible $\mathfrak{g}(X)$- and $\mathfrak{g}(Y)$-representations respectively (Proposition \ref{SH(X) irreducible}), and the fact that the Mukai pairings on $H^*(X, \mathbb{Q})$ and $H^*(Y, \mathbb{Q})$ restrict to pairings on $SH^*(X, \mathbb{Q})$ and $SH^*(Y, \mathbb{Q})$ implies the result. 
\end{proof}
Let $\tilde{\mathcal{G}}$ be the groupoid whose objects are IHSMs and whose morphisms are rational Hodge isometries of the Mukai lattice. By the results of \cite[Section 4]{Taelman}, there exists a functor \[\tilde{H}: \mathcal{G}\longrightarrow \tilde{\mathcal{G}},\]
which is characterized by the following proposition: 
\begin{Prop}[{\cite[Proposition 4.4, Theorem 4.10, and Theorem 4.11]{Taelman}}]
    Let $V_1$ and $V_2$ be $d$-dimensional $\mathbb{Q}$-vector spaces with nondegenerated bilinear pairings and $\phi: S_{[n]}V_1\rightarrow S_{[n]}V_2$ an isometry such that $\phi(S_{[n]}\mathfrak{so}(V_1))\phi^{-1} = S_{[n]}\mathfrak{so}(V_2)$. Assume that $d$ is odd for $n$ even and choose orientations for $V_1$ and $V_2$. Then there exists a unique isometry $\tilde{H}(\phi):V_1\rightarrow V_2$ such that the restriction $S_{[n]}(\tilde{H}(\phi))$ of its $n$th-symmetric power to $S_{[n]}V_1$ satisfies $S_{[n]}(\tilde{H}(\phi)) = \phi$, if $n$ is odd, and $\nu S_{[n]}(\tilde{H}(\phi)) = \phi$ if $n$ is even, where $v = 1$ if $\tilde{H}(\phi)$ is orientation preserving, and $-1$ otherwise.  
\end{Prop}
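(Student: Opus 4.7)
The plan is to extract an isometry $f\colon V_1\to V_2$ from the conjugation hypothesis by using the classical fact that Lie algebra isomorphisms between orthogonal Lie algebras $\mathfrak{so}(V_1)\cong \mathfrak{so}(V_2)$ lift to isometries of the underlying quadratic spaces (modulo sign), and then to verify that the $n$-th symmetric power $S_{[n]}(f)$ agrees with $\phi$ on $S_{[n]}V_1$ up to a controllable sign. The orientation data (and the dichotomy between odd and even $n$) will serve precisely to pin down this remaining sign.

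First I would observe that the conjugation hypothesis $\phi\, S_{[n]}\mathfrak{so}(V_1)\, \phi^{-1} = S_{[n]}\mathfrak{so}(V_2)$, together with the fact that the representation $\mathfrak{so}(V_i)\hookrightarrow \mathrm{End}(S_{[n]}V_i)$ is faithful in our dimension range, allows me to descend $\mathrm{Ad}_\phi$ to a Lie algebra isomorphism $\psi\colon \mathfrak{so}(V_1)\xrightarrow{\sim}\mathfrak{so}(V_2)$. I would then invoke the classical classification that every Lie algebra isomorphism between $\mathfrak{so}(V_1)$ and $\mathfrak{so}(V_2)$ with $\dim V_i = d \geq 3$ (and $d \neq 8$, to avoid triality) is induced by an isometry $f\colon V_1\to V_2$, unique up to sign. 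The hypothesis that $d$ is odd when $n$ is even keeps us away from the $\mathfrak{so}(4)$- and $\mathfrak{so}(8)$-type exceptions and also ensures that $-I$ is orientation-reversing on $V_i$, which is what will allow the orientation to break the sign ambiguity later.

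Next, $S_{[n]}(f)\colon S_{[n]}V_1\to S_{[n]}V_2$ is an isometry that, by construction, intertwines the two $\mathfrak{so}$-actions in exactly the same way as $\phi$. Because $S_{[n]}V_i$ (the traceless symmetric $n$-tensors, i.e.\ the spherical harmonics of degree $n$) is an irreducible $\mathfrak{so}(V_i)$-representation in our dimension range, Schur's lemma forces $\phi\circ S_{[n]}(f)^{-1}$ to be a scalar, and the requirement that both sides be isometries gives $\phi = \pm S_{[n]}(f)$. If $n$ is odd, then $S_{[n]}(-f) = -S_{[n]}(f)$, so I can absorb the sign by replacing $f$ with $-f$ if necessary, after which $\tilde{H}(\phi):=f$ is the unique isometry satisfying $\phi = S_{[n]}(\tilde{H}(\phi))$. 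If $n$ is even, then $S_{[n]}(-f) = S_{[n]}(f)$, so the sign cannot be absorbed into $f$; instead, the chosen orientations on $V_1$ and $V_2$ single out the unique element of $\{f,-f\}$ whose orientation indicator $\nu\in\{\pm 1\}$ satisfies $\nu\, S_{[n]}(\tilde{H}(\phi))=\phi$ (this choice is well-posed exactly because $d$ is odd, so $f$ and $-f$ have opposite orientations).

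Uniqueness of $\tilde{H}(\phi)$ then follows: $f$ is determined by $\psi$ up to sign, and the sign is pinned down either by the requirement $S_{[n]}(f)=\phi$ (when $n$ is odd) or by the orientation (when $n$ is even). The main obstacle I anticipate is Step~2, the lifting of the Lie algebra isomorphism $\psi$ to a genuine isometry of quadratic spaces; this is classical but must be handled carefully to avoid exceptional low-dimensional phenomena, which is exactly what the hypotheses ``$d\geq 3$'' and ``$d$ odd when $n$ is even'' accomplish. A secondary issue is the irreducibility of $S_{[n]}V$ as an $\mathfrak{so}(V)$-representation over $\mathbb{Q}$, but since $\mathrm{End}_{\mathfrak{so}(V)}(S_{[n]}V\otimes_{\mathbb{Q}} \mathbb{C})=\mathbb{C}$ is classical, Schur still forces the intertwiner to be a rational scalar, completing the argument.
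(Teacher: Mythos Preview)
The paper does not supply a proof of this proposition; it is quoted directly from Taelman (Proposition~4.4, Theorems~4.10 and~4.11) and used as a black box to define the functor $\tilde{H}$. Your outline is essentially the argument Taelman gives: descend $\mathrm{Ad}_\phi$ to a Lie algebra isomorphism $\mathfrak{so}(V_1)\cong\mathfrak{so}(V_2)$ via faithfulness of $S_{[n]}$, lift this to an isometry $f$ (unique up to sign) using the description of $\mathrm{Aut}(\mathfrak{so}_d)$, and then invoke absolute irreducibility of $S_{[n]}V$ plus the isometry condition to force $\phi=\pm S_{[n]}(f)$, with the sign resolved by parity and orientation exactly as you describe.

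One caveat: your remark that the hypotheses ``$d\ge 3$'' and ``$d$ odd when $n$ is even'' are what rule out the triality and low-dimensional exceptions is slightly optimistic as stated, since the proposition imposes no parity condition on $d$ when $n$ is odd (so $d=8$ is not excluded there by the hypotheses alone). In Taelman's setting this is handled because the relevant $V_i=\tilde{H}(X,\mathbb{Q})$ always have $d=b_2(X)+2\ge 5$ and the $d=8$ case does not arise for the deformation types under consideration; in the abstract linear-algebra statement one must either add a hypothesis or give the extra argument for $d=8$. This does not affect the applications in the present paper.
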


The functor $\tilde{H}$ sends a morphism $\phi\in \mathrm{Hom}((X, \epsilon_X), (Y, \epsilon_Y))$ to $\tilde{H}(\Psi_Y\circ \phi_{SH} \circ \Psi_X): \tilde{H}(X, \mathbb{Q})\rightarrow \tilde{H}(Y, \mathbb{Q})$. $\tilde{H}(S_{[n]}(f_1)\circ \phi \circ S_{[n]}(f_2))$ is given by 
    \[\begin{cases}
    \hspace{3mm}f_2\circ \tilde{H}(\phi)\circ f_1, & \hspace{3mm} n \text{ odd}\\
   \hspace{3mm} \nu_1\nu_2 f_2\circ \tilde{H}(\phi)\circ f_1, & \hspace{3mm} n \text{ even},\\
    \end{cases}\]
    (see \cite[Remark 3.2]{Markman22}).

For a degree-reversing Hodge isometry $\phi$, we denote by 
\begin{equation}\label{H tilde map}
    \tilde{H}(\phi): \tilde{H}(X, \mathbb{Q})\longrightarrow \tilde{H}(Y, \mathbb{Q})
\end{equation}
the resulting map on the Mukai lattice. Denote by 
\begin{equation}\label{restriction of H tilde map}
\tilde{H}_0(\phi): H^2(X, \mathbb{Q})\longrightarrow H^2(Y, \mathbb{Q})
\end{equation}
the restriction of (\ref{H tilde map}) to $H^2(X, \mathbb{Q})$.

\begin{Prop}[{\cite[Lemma 4.1(1), (2)]{Markman22}}]\label{degree reversing hodge isometries}
Let $\phi_{\kappa}$ be as in (\ref{phi kappa}) and let $\tilde{\phi}_{\kappa}$ be the image of $\phi_{\kappa}$ under the functor (\ref{H tilde map}). Then:
    \begin{enumerate}
        \item The isometry $\tilde{\phi}_{\kappa}$ is degree-reversing.
        \item The isometry $\phi_{\kappa}$ is degree-reversing. 
    \end{enumerate}
\end{Prop}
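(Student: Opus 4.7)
The plan is to prove (1) first and deduce (2) from (1) using the equivariance statement of Theorem \ref{Taelman A}.

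For (1), I first reduce the degree-reversing property of $\tilde\phi_\kappa$ on $\tilde H(Y,\mathbb Q) = \mathbb Q\alpha_Y \oplus H^2(Y,\mathbb Q) \oplus \mathbb Q\beta_Y$ to a single containment. Since $\alpha_Y$ and $\beta_Y$ are the only (up to scalar) isotropic vectors of $\tilde H(Y,\mathbb Q)$ that pair nontrivially with each other, and $\tilde\phi_\kappa$ is a Mukai-lattice isometry, the condition $\mathrm{Ad}(\tilde\phi_\kappa)(\tilde h_X) = -\tilde h_Y$ is equivalent to the single containment $\tilde\phi_\kappa(\alpha_X) \in \mathbb Q\cdot\beta_Y$: isometry together with the relation $(\alpha,\beta) = -1$ then automatically forces $\tilde\phi_\kappa(\beta_X) \in \mathbb Q\cdot\alpha_Y$ and $\tilde\phi_\kappa(H^2(X,\mathbb Q)) = H^2(Y,\mathbb Q)$.

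To pin down $\tilde\phi_\kappa(\alpha_X)$, I use the identification $\Psi_X(1_X) = \alpha_X^n/n!$ of Proposition \ref{Unique module isomorphism} together with functoriality of the assignment $\phi \mapsto \tilde H(\phi)$, which yields
\[
\Psi_Y(\phi_\kappa(1_X)) \;=\; \pm\,\tilde\phi_\kappa(\alpha_X)^n/n!
\]
in $\mathrm{Sym}^n\tilde H(Y,\mathbb Q)$. Since a nonzero pure $n$-th power in the symmetric algebra lies in $\mathbb C\cdot\beta_Y^n$ if and only if its linear factor is a scalar multiple of $\beta_Y$, and $\Psi_Y^{-1}(\beta_Y^n/n!)$ is (up to a nonzero scalar) the class of a point in $H^{4n}(Y,\mathbb Q)$, the problem reduces to showing
\[
\phi_\kappa(1_X) \;\in\; H^{4n}(Y,\mathbb Q).
\]
This top-degree containment is the main obstacle. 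Using the formula $\phi_\kappa(1_X) = \pi_{Y*}(\kappa(\mathcal F)\sqrt{\mathrm{td}_{X\times Y}})$, one must show that only the component of $\kappa(\mathcal F)\sqrt{\mathrm{td}_{X\times Y}}$ of top total degree on $X\times Y$ survives pushforward to $Y$. I expect the verification to proceed by combining a degree-count for the Mukai vector of $\mathcal F$ with the fact that the $c_1$-contributions responsible for the mixed-degree terms are precisely absorbed by the $B_{-\lambda_X/r}$ and $B_{-\lambda_Y/r}$ factors appearing in the normalization (\ref{description of phi kappa}); the positivity of $\mathrm{rank}(\mathcal F)$ and the fact that $\Phi_{\mathcal F}$ is an equivalence of deformation-equivalent IHSMs are essential.

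Once (1) is in hand, (2) follows by Lie algebra transport. Under the graded identification of Proposition \ref{Graded Lie algebra isomorphism}, the relation $\mathrm{Ad}(\tilde\phi_\kappa)(\tilde h_X) = -\tilde h_Y$ established in (1) translates into $\Phi^{\mathfrak g}(h_X) = -h_Y$ in $\mathfrak g(Y)$, where $\Phi^{\mathfrak g}$ is Taelman's Lie algebra isomorphism. The equivariance of $\phi_\kappa$ supplied by Theorem \ref{Taelman A} then yields $\phi_\kappa\circ h_X = -h_Y\circ \phi_\kappa$ on all of $H^*(X,\mathbb Q)$. For $v\in H^k(X,\mathbb Q)$, the fact that $h_X$ acts as multiplication by $k-2n$ places $\phi_\kappa(v)$ in the $(2n-k)$-weight space of $h_Y$, which is $H^{4n-k}(Y,\mathbb Q)$, establishing (2).
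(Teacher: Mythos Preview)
Your argument for part~(2) via the grading operator and Taelman's equivariance is essentially the paper's, so that half is fine.

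For part~(1), however, there are two genuine gaps.

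\textbf{The reduction is incorrect.} You claim that the single containment $\tilde\phi_\kappa(\alpha_X)\in\mathbb Q\beta_Y$ forces $\tilde\phi_\kappa(\beta_X)\in\mathbb Q\alpha_Y$ and $\tilde\phi_\kappa(H^2(X,\mathbb Q))=H^2(Y,\mathbb Q)$ by isometry alone. This is false: the lattice $\tilde H(Y,\mathbb Q)$ has signature $(4,b_2-3)$, so $\alpha_Y,\beta_Y$ are far from being the only isotropic vectors. Concretely, if $\tilde\phi_\kappa(\alpha_X)=c\beta_Y$, then any vector of the form $c^{-1}\alpha_Y+v+b\beta_Y$ with $v\in H^2(Y,\mathbb Q)$ satisfying $(v,v)=2b/c$ is isotropic and pairs correctly with $c\beta_Y$; nothing forces $v=0$. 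The paper avoids this by proving \emph{both} $\tilde\phi_\kappa(\beta_X)\in\mathbb Q\alpha_Y$ and $\tilde\phi_\kappa^{-1}(\beta_Y)\in\mathbb Q\alpha_X$ separately, and only then concludes that $H^2$ is preserved by orthogonality.

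\textbf{The key computation is missing.} Even granting your reduction strategy, the heart of the matter is the claim $\phi_\kappa(1_X)\in H^{4n}(Y,\mathbb Q)$, which you describe as ``the main obstacle'' and then do not prove; ``I expect the verification to proceed by\ldots'' is not an argument. There is no obvious degree-count that makes $\pi_{Y*}\bigl(\kappa(\mathcal F)\sqrt{td_{X\times Y}}\bigr)$ land purely in top degree without further input. The paper takes a completely different route: it works directly on $\tilde H$ via the factorization $\tilde\phi_\kappa=B_{-\lambda_Y/r}\,\tilde H(\Phi_{\mathcal F})\,B_{-\lambda_X/r}$ and invokes Beckmann's explicit formula $\tilde H(\Phi_{\mathcal F})(\beta_X)=k\bigl(r\alpha_Y+\lambda_Y+\tfrac{(\lambda_Y,\lambda_Y)}{2r}\beta_Y\bigr)$, after which the $B$-twist visibly collapses the image to $\mathbb Q\alpha_Y$. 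This is where the positive-rank hypothesis on $\mathcal F$ actually enters, and it is the substantive input you are missing.
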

\begin{proof}
    It is convenient that we first prove part $(1)$ and then demonstrate that $(1)\implies (2)$.

    Recall that $\tilde{\phi}_{\kappa}$ is an isometry of Hodge structures
    $$
    \tilde{\phi}_{\kappa}: \tilde{H}(X, \mathbb{Q})\longrightarrow \tilde{H}(Y, \mathbb{Q}),
    $$
    with respect to the Mukai pairings on $\tilde{H}(X, \mathbb{Q})$ and $\tilde{H}(Y, \mathbb{Q})$. We show first that $\mathrm{Span}(\tilde{\phi}(\alpha_X)) = \mathrm{Span}(\beta_Y)$ and $\mathrm{Span}(\tilde{\phi}(\beta_X)) = \mathrm{Span}(\alpha_Y)$.
    
    Let $r>0$ be the rank of the Fourier-Mukai kernel $\mathcal{F}$, let $\pi_i$ be the projection maps from $X\times Y$, and let $c_1(\mathcal{F}) = \pi_1^*(\lambda_X)+ \pi_2^*(\lambda_Y)$ for $\lambda_X\in H^2(X, \mathbb{Q})$ and $\lambda_Y\in H^2(Y, \mathbb{Q})$. By (\ref{description of phi kappa}), $\tilde{\phi}_{\kappa}$ is of the form 
    $$
    \tilde{\phi}_{\kappa} = B_{-\lambda_Y/r}\tilde{H}(\Phi_{\mathcal{F}}) B_{-\lambda_X/r}.
    $$
    Now, $B_{-\lambda_X/r} = \mathrm{exp}(e_{-\lambda_x/r})$ and hence $B_{-\lambda_X/r}(\beta_X) = \beta_X$. The equality
    $$
    \tilde{H}(\Phi_{\mathcal{F}})(\beta_X) = k(r\alpha_Y+ \lambda_Y + \frac{(\lambda_Y, \lambda_Y)}{2r}\beta_{Y})
    $$
    for some $k\in \mathbb{Q}$ follows from \cite[Lemma 4.13(v)]{Beckmann}. We then note that $$B_{-\lambda_Y/r}(r\alpha_Y+ \lambda_Y + \frac{(\lambda_Y, \lambda_Y)}{2r}\beta_{Y}) = r\alpha_Y,$$ and observe 
    $$
    \tilde{\phi}_{\kappa}^{-1} = B_{-\lambda_X/r} \tilde{H}(\Phi_{\mathcal{F}^{*}[2n]}) B_{-\lambda_Y/r},
    $$
which implies that $\tilde{\phi}_{\kappa}^{-1}$ maps $\mathrm{Span}(\beta_Y)$ to $\mathrm{Span}(\alpha_X)$ and part $(1)$ follows as a result.

    Let us now show that $\phi_{\kappa}$ is degree-reversing. Recall that $\tilde{\phi}_{\kappa}$ conjugates the LLV algebra of $X$ to that of $Y$ by Theorem \ref{Taelman A}, and therefore the adjoint action of $\tilde{\phi}_{\kappa}$ conjugates the operator $\tilde{h}_X$ from (\ref{tilde h}) to $\tilde{h}_Y$ up to sign. This in turn implies that the adjoint action of $\phi$ conjugates $h_X$ to $h_Y$ up to that same sign by \cite[Lemma 3.4]{Markman22}. Since the isomorphism 
    $$
    \Psi: \mathfrak{g}(X)\longrightarrow \mathfrak{so}(\tilde{H}(X, \mathbb{Q}))
    $$
    from Proposition \ref{Graded Lie algebra isomorphism} is a graded isomorphism of Lie algebras, part $2$ then follows from the existence of the commutative diagram

    \[\begin{tikzcd}
	{\mathfrak{so}(\tilde{H}(X, \mathbb{Q}))} && {\mathfrak{so}(\tilde{H}(Y, \mathbb{Q}))} \\
	\\
	{\mathfrak{g}_X} && {\mathfrak{g}_Y}
	\arrow["{Ad(\tilde{\phi}_{\kappa})}", from=1-1, to=1-3]
	\arrow["{\Psi_X}", from=3-1, to=1-1]
	\arrow["{Ad(\phi_{\kappa})}", from=3-1, to=3-3]
	\arrow["{\Psi_Y}", from=3-3, to=1-3]
\end{tikzcd}\]
See \cite{Taelman}.
\end{proof}

The existence of the degree-reversing Hodge isometries associated to a derived equivalence implies that given an algebraic operator $L_{\omega}\in \mathfrak{g}(X)$ with the hard Lefschetz property, the adjoint Lefschetz operator $\Lambda_{\tilde{\phi}(\omega)}\in \mathfrak{g}(Y)$ is algebraic. In particular, this yields the following: 

\begin{Cor}\label{Derived equivalent LSC}
    Let $X$ and $Y$ be deformation equivalent IHSMs and $\Phi_{\mathcal{F}}: D^b(X)\rightarrow D^b(Y)$ an equivalence of derived categories for which the Fourier-Mukai kernel $\mathcal{F}$ has positive rank. Assume that the induced Hodge isometry $\phi_{\mathcal{F}}: H^*(X, \mathbb{Q})\rightarrow H^*(Y, \mathbb{Q})$ is degree-reversing. Then the Lefschetz standard conjecture holds for both $X$ and $Y$.
\end{Cor}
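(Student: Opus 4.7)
The strategy is to combine the algebraicity of the Fourier-Mukai kernel $\mathcal{F}$ with the degree-reversing property of $\phi_{\mathcal{F}}$ to realize an adjoint Lefschetz operator as a composition of algebraic correspondences. By Theorem \ref{Taelman A}, conjugation by $\phi_{\mathcal{F}}$ induces an isomorphism of LLV algebras $\mathfrak{g}(X)\xrightarrow{\sim} \mathfrak{g}(Y)$. Under the graded identification $\mathfrak{g}(-)\cong \mathfrak{so}(\tilde{H}(-, \mathbb{Q}))$ of Proposition \ref{Graded Lie algebra isomorphism}, the degree $+2$ piece is spanned by the Lefschetz operators $L_\lambda$ and the degree $-2$ piece by their adjoints $\Lambda_\lambda$. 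Since $\phi_{\mathcal{F}}$ is degree-reversing, this conjugation interchanges the two graded pieces, so the image of an $L$-operator is necessarily a $\Lambda$-operator.

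Concretely, I would fix an ample class $f\in H^{1, 1}(Y, \mathbb{Q})$. The restriction $\tilde{H}_0(\phi_{\mathcal{F}}): H^2(X, \mathbb{Q})\xrightarrow{\sim} H^2(Y, \mathbb{Q})$ from (\ref{restriction of H tilde map}) is a rational Hodge isometry, so $\lambda := \tilde{H}_0(\phi_{\mathcal{F}})^{-1}(f)$ lies in $H^{1, 1}(X, \mathbb{Q})$. Unwinding the $\pm 2$ weight spaces of $\mathfrak{so}(\tilde{H}(X, \mathbb{Q}))$ via the descriptions $\tilde{\phi}_{\kappa}(\alpha_X)\in \mathbb{Q}\beta_Y$ and $\tilde{\phi}_{\kappa}(\beta_X)\in \mathbb{Q}\alpha_Y$ established in the proof of Proposition \ref{degree reversing hodge isometries}, one verifies
\[
\phi_{\mathcal{F}}\circ L_\lambda\circ \phi_{\mathcal{F}}^{-1} = c\cdot \Lambda_f
\]
for some nonzero $c\in \mathbb{Q}$.

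The algebraicity then follows from three observations. First, $\phi_{\mathcal{F}}$ is the cohomological action of $\kappa(\mathcal{F})\sqrt{\mathrm{td}_{X\times Y}}$, and both $\mathrm{ch}(\mathcal{F})$ and $\mathrm{td}_{X\times Y}$ lift to classes in $\mathrm{CH}^*(X\times Y)_{\mathbb{Q}}$, so $\phi_{\mathcal{F}}$ is algebraic; its inverse, arising from the kernel of the quasi-inverse equivalence, is algebraic by the same token. Second, $\lambda\in H^{1, 1}(X, \mathbb{Q})$ is represented by an algebraic cycle (Lefschetz $(1, 1)$), so $L_\lambda$ is the algebraic correspondence $[\Delta_X]\cdot \pi_1^*\lambda$ on $X\times X$. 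Composing, $\Lambda_f = c^{-1}\,\phi_{\mathcal{F}}L_\lambda\phi_{\mathcal{F}}^{-1}$ is algebraic, which yields Conjecture \ref{LSC} for $Y$. The symmetric argument, starting with an ample class $g\in H^{1, 1}(X, \mathbb{Q})$ and setting $\mu := \tilde{H}_0(\phi_{\mathcal{F}})(g)\in H^{1, 1}(Y, \mathbb{Q})$, produces $\Lambda_g$ as an algebraic operator on $X$ via the same mechanism, giving LSC for $X$ as well.

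The principal technical step is the identity $\phi_{\mathcal{F}} L_\lambda \phi_{\mathcal{F}}^{-1} = c\cdot \Lambda_{\tilde{H}_0(\phi_{\mathcal{F}})(\lambda)}$ up to the nonzero rational scalar $c$. This reduces to a direct calculation in $\mathfrak{so}(\tilde{H}(Y, \mathbb{Q}))$ using the explicit action of $\tilde{\phi}_{\kappa}$ on the $\alpha_X, \beta_X$ axes and the isometry property of $\tilde{H}_0(\phi_{\mathcal{F}})$; it is routine but requires careful bookkeeping of the scalars appearing when $\tilde{\phi}_{\kappa}$ swaps the spans of $\alpha$ and $\beta$.
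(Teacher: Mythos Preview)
Your proposal is correct and follows essentially the same approach as the paper: both arguments exploit Taelman's result that conjugation by the cohomological Fourier--Mukai isometry intertwines the LLV algebras, together with the degree-reversing property to exchange the $+2$ and $-2$ weight pieces and thereby realize the adjoint Lefschetz operator as an algebraic correspondence. The paper phrases the key identity as $\phi_{\kappa}^{-1}L_{\tilde{\phi}_{\kappa}(f_X)}\phi_{\kappa} = \Lambda_{f_X}$ (up to a scalar, citing \cite[Lemma 1.6]{Markman22} for the verification), while you run it in the other direction, but this is an immaterial difference; your explicit breakdown of why each factor---$\phi_{\mathcal{F}}$, its inverse, and $L_{\lambda}$---is algebraic is a useful elaboration that the paper leaves implicit.
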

\begin{proof}
    Recall that over the complex numbers it suffices to show that for fixed classes $f_X\in H^{1, 1}(X, \mathbb{Q})$ and $f_{Y}\in H^{1, 1}(Y, \mathbb{Q})$ the adjoint Lefschetz operators $\Lambda_{f_X}$ and $\Lambda_{f_Y}$ are algebraic. Fix $f_X\in H^{1, 1}(X, \mathbb{Q})$ and consider $L_{\tilde{\phi}_{\kappa}(f_X)}$. Since, by Proposition \ref{degree reversing hodge isometries}, $\phi_{\kappa(\mathcal{F})}$ is a degree-reversing Hodge isometry whose adjoint restricts to an isomorphism $\mathfrak{g}(X)\cong \mathfrak{g}(Y)$, and furthermore the action of the LLV algebra preserves the rational LLV lattice, then, up to a scalar multiple, we have $\tilde{\phi}^{-1}_{\kappa}L_{\tilde{\phi}_{\kappa}(f_X)}\tilde{\phi}^{-1}_{\kappa} = \Lambda_{f_X}$. By Proposition \ref{degree reversing hodge isometries}, this is equivalent to $\phi^{-1}_{\kappa}L_{\tilde{\phi}_{\kappa}(f_X)}\phi^{-1}_{\kappa} = \Lambda_{f_X}$ up to scalar multiple (see the calculation in \cite[Lemma 1.6]{Markman22}). Therefore, the class $L_{f_X}$ being algebraic implies that $\Lambda_{f_X}$ is as well. The symmetry of the statement follows by replacing $\phi_{\kappa(\mathcal{F})}$ with $\phi_{\kappa(\mathcal{F}^*[2n])}$.
\end{proof}

We can then immediately deduce the LSC for a particular class of generalized Kummer varieties:

\begin{Cor}
    Let $A$ be an Abelian surface that carries a polarization $A\rightarrow \widehat{A}$ whose exponent is coprime to $n$, for which $n$ is even. Then Conjecture \ref{BX} holds for both $Kum_n(A)$ and $Kum_n(\widehat{A})$.
\end{Cor}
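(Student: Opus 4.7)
The plan is to deduce this corollary directly by assembling results that have already been established in the excerpt, without any new deformation argument (that is reserved for the general case treated in section \ref{deforming derived equivalences section}). The hypothesis on $\mathrm{exp}(\lambda)$ is exactly what is needed to invoke Magni's construction, which upon BKR-conjugation yields an actual (not merely deformed) derived equivalence between $Kum_n(A)$ and $Kum_n(\widehat{A})$, and at that point the Lefschetz standard conjecture becomes a matter of applying Corollary \ref{Derived equivalent LSC}.

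First I would invoke Theorem \ref{Magni derived equivalence} to produce the $G^2$-equivariant simple semihomogeneous vector bundle $G^2\cdot \mathcal{F}$ on $W\times \widehat{W}$ and its BKR-image $\mathcal{F}^K\in D^b(Kum_n(A)\times Kum_n(\widehat{A}))$, which is the Fourier-Mukai kernel of an equivalence $\Phi_{\mathcal{F}^K}: D^b(Kum_n(A))\xrightarrow{\sim} D^b(Kum_n(\widehat{A}))$. The numerical hypothesis is precisely the Magni condition $\mathrm{gcd}(n+1,\mathrm{exp}(\lambda)) = 1$ together with the evenness of $n$, so the hypotheses of Theorem \ref{Magni derived equivalence} are satisfied.

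Next I would verify that the kernel $\mathcal{F}^K$ has positive rank: $\mathcal{F}$ is of positive rank as a simple semihomogeneous vector bundle, hence so is its inflation $G^2\cdot \mathcal{F}$, and the generic fiber of the BKR image $\mathcal{F}^K$ over a point of $Kum_n(A)\times Kum_n(\widehat{A})$ corresponding to a free $G^2$-orbit is $H^0(Z,\mathcal{F}|_Z)$, which has positive rank. With this in hand, I would then apply Proposition \ref{degree reversing hodge isometries} to conclude that the induced Hodge isometry $\phi_{\kappa(\mathcal{F}^K)}:H^*(Kum_n(A),\mathbb{Q})\to H^*(Kum_n(\widehat{A}),\mathbb{Q})$ is degree-reversing.

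With the existence of a degree-reversing Hodge isometry arising from a Fourier-Mukai kernel of positive rank between two deformation-equivalent IHSMs, Corollary \ref{Derived equivalent LSC} yields the Lefschetz standard conjecture for both $Kum_n(A)$ and $Kum_n(\widehat{A})$. Since the entire argument is assembly, there is no genuine obstacle: the real work has been done upstream, both in Magni's construction of the derived equivalence and in Taelman's theorem underlying Proposition \ref{degree reversing hodge isometries}. The only mild point to attend to is confirming positive rank of the BKR image, which is immediate from the description of $\mathcal{F}^K$ in (\ref{BKR Magni}) on free orbits.
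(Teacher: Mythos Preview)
Your proposal is correct and follows essentially the same route as the paper: the paper's proof is a single sentence combining Corollary \ref{Derived equivalent LSC} with Magni's construction (cited there as \cite[Theorem 7.27]{Magni}, which in the present paper is packaged as Theorem \ref{Magni derived equivalence}). You have simply unpacked that sentence, making explicit the positive-rank check on $\mathcal{F}^K$ and the appeal to Proposition \ref{degree reversing hodge isometries} for the degree-reversing property, both of which the paper leaves implicit.
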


\begin{proof}
The statement is the combination of Corollary \ref{Derived equivalent LSC} and \cite[Theorem 7.27]{Magni}.    
\end{proof}

\section{Hyperholomorphic sheaves and moduli spaces of rational Hodge isometries}\label{deforming derived equivalences section}

Our next objective is to utilize the theory of Verbitsky and Markman to deform the Fourier-Mukai kernel over diagonal twistor lines in the moduli space of products of Hodge-isometric marked IHSMs. It is not immediately obvious what it means for a sheaf to be hyperholomorphic over a product of IHSMs, nor even what it means for such sheaves to be slope-stable. To that end, we deform over a connected component of the moduli space that parametrizes products of IHSMs whose cohomology rings are rational Hodge-isometric, following Markman in \cite{Markman22}. 

\begin{Def}
    A \textit{marking} of an IHSM $Y$ is a fixed isometry 
    $$
    \eta: H^2(Y, \mathbb{Q})\longrightarrow \Lambda,
    $$
    for $\Lambda$ a fixed lattice with respect to the  Beauville-Bogomolov-Fujiki (BBF) and Mukai pairings.
    \end{Def}
    
    We denote by $\mathfrak{M}_{\Lambda}$ the moduli space of $\Lambda$-marked IHSMs. Its dimension is $\mathrm{rank}(\Lambda)-2$; see \cite{Huybrechts2}.  For $q_Y$ the BBF pairing (\ref{BBF form}), we denote by \begin{equation}
    \Omega_{\Lambda}: = \{x\in \mathbb{P}(\Lambda\otimes \mathbb{C}): q_Y(x, x)=0,\hspace{2mm} q_Y(x,\overline{x})>0\}
    \end{equation}
    the five dimension period domain of $\Lambda$-marked IHSMs. The period map
    \begin{equation}
        Per: \mathfrak{M}_{\Lambda}\longrightarrow \Omega_{\Lambda},
    \end{equation}
    defined by $Per((Y, \eta)) = \eta(H^{2, 0}(Y))$, is surjective and generically injective when restricted to every connected component of $\mathfrak{M}_{\Lambda}$, by \cite{Huybrechts1}.

\begin{Rem}
    Let $\mathfrak{M}^0_{\Lambda}$ denote a connected component of the moduli space $\mathfrak{M}_{\Lambda}$. There exists a universal family 
  \begin{equation}\label{universal family}
      h: \mathcal{Y}\longrightarrow \mathfrak{M}_{\Lambda}^0
  \end{equation}
  of IHSMs by the main result of \cite{Markman2}. There is a trivialization of the local system $R^2h_*\mathbb{Z}\rightarrow \underline{\Lambda}$ by loc. cit. What's more, every local system over $\mathfrak{M}^0_{\Lambda}$ is trivial by \cite[Lemma 1.2]{Markman2}.
\end{Rem}
\begin{Def}
    A \textit{sheaf of Azumaya algebras} of rank $r$ on a variety $X$ is a sheaf $\mathcal{A}$ of $\mathcal{O}_X$-modules with a distinguished global section $1_{\mathcal{A}}$ and an associative multiplication map $\mathcal{A}\otimes_{\mathcal{O}_X}\mathcal{A}\rightarrow \mathcal{A}$.
\end{Def}

\begin{Def}\label{deformation of Azumaya algebras}
    Let $X_1$ and $X_2$ be compact K\"{a}hler manifolds and let $\mathcal{A}_i$ be Azumaya algebras over the $X_i$. Then $\mathcal{A}_2$ is \textit{deformation equivalent} to $\mathcal{A}_1$ if there exists a proper family $\mathcal{X}\rightarrow B$ of compact K\"{a}hler manifolds over a connected analytic base $B$ such that there exist points $b_i\in B$ and isomorphisms $f_i: X_i\rightarrow X_{b_i}$, and an Azumaya algebra $\mathcal{A}$ over $\mathcal{X}$ such that $f^*_i\mathcal{A}\cong \mathcal{A}_i$ for each $i$.
\end{Def}

\begin{Def}\label{moduli space of hodge isometries}
 Let $E$ be the Fourier-Mukai kernel of nonzero rank of an equivalence $\Phi_E: D^b(X)\rightarrow D^b(Y)$ of derived equivalent IHSMs $X$ and $Y$. Let $\eta_X$ and $\eta_Y$ be respective markings, and let the moduli space 
\begin{equation}
\mathfrak{M}_{\psi}
\end{equation}
consist of isomorphism classes of quadruples $(X, \eta_X, Y, \eta_Y)$ such that the following holds:
\begin{enumerate}

    \item The map 
    \begin{equation}\label{psi isometry}
    \psi: = \tilde{H}_0([\kappa(E)\sqrt{td_{X\times Y}}]_*): \Lambda\longrightarrow \Lambda. 
    \end{equation}
    is a Hodge isometry, where $\tilde{H}_0$ denotes the map from (\ref{restriction of H tilde map}).

    \item The Hodge isometry 
    $$
    \eta_2^{-1}\circ \psi\circ \eta_1: H^2(X, \mathbb{Q})\rightarrow H^2(Y, \mathbb{Q}) 
    $$
    maps a subcone of the K\"{a}hler cone of $X$ to a subcone of the K\"{a}hler cone of $Y$. 
\end{enumerate}

\end{Def}

    We note that $\mathfrak{M}^0_{\psi}$ provides a well-defined connected component of the moduli space of products of IHSMs. Let
     \begin{equation}
         \Pi_i: \mathfrak{M}^0_{\psi}\longrightarrow \mathfrak{M}^0_{\Lambda}
     \end{equation}
     denote the morphism sending $(X, \eta_X, Y, \eta_Y)$ to $(X, \eta_X)$.

      \begin{Prop}[\cite{Markman1} Lemma 5.14]
     For $i = 1, 2$, the morphisms $\Pi_i$ are surjective. 
 \end{Prop}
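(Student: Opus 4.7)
The plan is to prove surjectivity of $\Pi_1$; surjectivity of $\Pi_2$ follows by a symmetric argument after interchanging the roles of $X$ and $Y$ and replacing $\psi$ with $\psi^{-1}$. The key inputs are Huybrechts' surjective period theorem on each connected component of $\mathfrak{M}_\Lambda$ together with Markman's monodromy-theoretic wall-and-chamber decomposition of the positive cone of an IHSM.

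\textbf{Step 1 (producing a Hodge-isometric partner).} Given $(X,\eta_X)\in \mathfrak{M}_\Lambda^0$, set $p_X := \eta_X(H^{2,0}(X))\in \Omega_\Lambda$. Since $\psi\in O(\Lambda)$, the image $\psi(p_X)$ lies in $\Omega_\Lambda$. Invoking the surjective period theorem, one produces $(Y',\eta_{Y'})\in \mathfrak{M}_\Lambda^0$ with $\eta_{Y'}(H^{2,0}(Y'))=\psi(p_X)$. Condition (1) of Definition \ref{moduli space of hodge isometries} then holds tautologically, as $\psi$ matches the $(2,0)$-parts of the Hodge structures induced on $\Lambda_{\mathbb{C}}$ by $\eta_X$ and $\eta_{Y'}$.

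\textbf{Step 2 (adjusting to the correct K\"{a}hler chamber).} Condition (2) may fail for an arbitrary lift $(Y',\eta_{Y'})$: the image $\eta_{Y'}^{-1}\psi\eta_X(\mathcal{K}_X)$ is an open cone inside the positive cone of $Y'$ which need not meet $\mathcal{K}_{Y'}$. The fiber $\mathrm{Per}^{-1}(\psi(p_X))$ contains all birational models of $Y'$ as non-separated points of $\mathfrak{M}_\Lambda^0$, and by Markman's theorem their K\"{a}hler chambers together exhaust the birational K\"{a}hler cone. I would therefore replace $(Y',\eta_{Y'})$ by the birational model $(Y,\eta_Y)$ whose K\"{a}hler chamber is precisely the one into which $\eta_Y^{-1}\psi\eta_X$ maps a chosen $\omega\in \mathcal{K}_X$, thereby securing condition (2).

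\textbf{Step 3 (landing in the correct connected component).} To verify that $(X,\eta_X,Y,\eta_Y)$ lies in the specified component $\mathfrak{M}_\psi^0$ rather than some other connected component, I would connect it to the base point (arising from the Fourier--Mukai kernel $E$) through a chain of \emph{diagonal twistor lines}. For any $\omega\in \mathcal{K}_X$ with $\eta_Y^{-1}\psi\eta_X(\omega)\in \mathcal{K}_Y$, the twistor family of $X$ with respect to $\omega$ and the twistor family of $Y$ with respect to $\eta_Y^{-1}\psi\eta_X(\omega)$ fit together over a common $\mathbb{P}^1$, yielding a $\mathbb{P}^1$-family inside $\mathfrak{M}_\psi$ along which both conditions (1) and (2) persist. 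Iterating across such twistor lines, which by Huybrechts' argument connect any two points of $\mathfrak{M}_\Lambda^0$, produces the desired path. The hardest technical step will be Step 2, where invoking Markman's monodromy group and the non-Hausdorff structure of $\mathrm{Per}^{-1}(\psi(p_X))$ is essential; Step 3 also requires care that the K\"{a}hler cones of $X$ and $Y$ deform compatibly along each twistor leg.
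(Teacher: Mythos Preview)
The paper supplies no proof of this proposition at all: it is stated with the attribution ``\cite{Markman1}, Lemma~5.14'' and the text immediately moves on. There is therefore no in-paper argument to compare your proposal against; the author is simply quoting Markman's result.

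For what it is worth, your outline is essentially the argument Markman gives in the cited reference. One small correction to Step~2: it is not merely the K\"{a}hler cones of birational models of $Y'$ (with the fixed marking) that are relevant, but the K\"{a}hler cones of \emph{all} marked pairs in the fiber $\mathrm{Per}^{-1}(\psi(p_X))\cap\mathfrak{M}_\Lambda^0$; these are the objects whose K\"{a}hler-type chambers tile the full positive cone (off the walls) by the global Torelli theorem in Markman's formulation. With that adjustment, your Step~2 goes through, and Step~3 via diagonal twistor paths is exactly the mechanism used in the reference.
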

 
Markman's universal family (\ref{universal family}) yields universal familes $\Pi_i^*(h): \Pi_i^*\mathcal{Y}\rightarrow \mathfrak{M}^0_{\psi}$, $i = 1, 2$ and a universal fiber product
\begin{equation}\label{universal fiber product}
\Psi: \Pi_1^*\mathcal{Y}\times_{\mathfrak{M}^0_{\psi}}\Pi^*_2\mathcal{Y}\longrightarrow \mathfrak{M}^0_{\psi}.
\end{equation}
The triviality of all local systems over $\mathfrak{M}^0_{\Lambda}$ implies the triviality of the local systems $R\Psi_*\mathbb{Z}$ and $R\Psi_*\mathbb{Q}$.

\begin{Def}
    Let $W$ be a positive-definite three-dimensional subspace of $\Lambda_{\mathbb{R}}$, and let $W_{\mathbb{C}}: = W\otimes_{\mathbb{R}}\mathbb{C}$. We call $\mathbb{P}(W_{\mathbb{C}})\cap \Omega_{\Lambda}$ a \textit{twistor line}. Now let $(X, \eta)\in \mathfrak{M}^0_{\Lambda}$ be a marked pair and let $\omega\in H^{1, 1}(X, \mathbb{R})$ be a K\"ahler class. The intersection $W= W_{\mathbb{C}}\cap H^2(X, \mathbb{R})$ is three-dimensional and positive-definite.  Define the twistor line 
    \begin{equation}\label{twistor line}
        Q_{(X,\, \eta,\, \omega)}: = \mathbb{P}(\eta(W_{\mathbb{C}}))\cap \Omega_{\Lambda}.
    \end{equation}

    Now let $X_1$ and $X_2$ be rationally Hodge-isometric projective varieties. Let $\omega_1$ be a K\"ahler class on $X_1$,  and consider $\omega_2: = \psi(\omega_1)$ the corresponding K\"ahler class on $X_2$. Let $W_i$ denote the three-dimensional subspace of  The isometry $\phi$ restricts to an isometry from $\eta(W_1)$ to $\eta(W_2)$, which is an isomorphism from $Q(X_1, \eta_1, \omega_1)$ to $Q(X_2, \eta_2, \omega_2)$. Define
    \begin{equation}\label{diagonal twistor line}
    Q_{(X_1, \, \eta_1, \, X_2, \, \eta_2, \, \omega_1)}\subset Q(X_1, \eta_1, \omega_1) \times Q(X_2, \eta_2, \omega_2)
    \end{equation}
    to be the graph of the restriction of $\psi$. We refer to $Q_{(X_1, \, \eta_1, \, X_2, \, \eta_2, \, \omega_1)}$ as the \textit{diagonal twistor line} associated to $(X_1, \eta_1, X_2, \eta_2, \omega_1)$. 
\end{Def}

\begin{Def}
    \begin{enumerate}
        \item Let $(X,\eta)$ and $(X^{\prime}, \eta^{\prime})$ be $\Lambda$-marked pairs in a connected component $\mathfrak{M}^0_{\Lambda}$ of the moduli space $\mathfrak{M}_{\Lambda}$. A \textit{twistor path} in $\mathfrak{M}^0_{\Lambda}$ from $(X,\, \eta)$ to $(X^{\prime},\, \eta^{\prime})$ consists of $\Lambda$-marked pairs $(X_i,\, \eta_i)$ for $0\leq i\leq k+1$ such that $(X_0, \eta_0) = (X, \, \eta)$ and $(X_{k+1},\, \eta_{k+1}) = (X^{\prime}, \eta^{\prime})$, and a twistor line $\tilde{Q}_{(X_i,\, \eta_i,\, \omega_i )}$ through $(X_i, \, \eta_i)$ and $(X_{i+1}, \, \eta_{i+1})$ for each $0\leq i \leq k$.

        \item Let $(X,\,\eta_X,\, Y,\, \eta_Y )$ and $(X^{\prime}, \eta_X^{\prime}, \, Y^{\prime},\, \eta^{\prime}_Y)$ be quadruples in a connected component $\mathfrak{M}^0_{\psi}$ of the moduli space $\mathfrak{M}_{\psi}$. A \textit{twistor path} in $\mathfrak{M}^0_{\psi}$ from $(X,\,\eta_X,\, Y,\, \eta_Y )$ to $(X^{\prime}, \eta_X^{\prime}, \, Y^{\prime},\, \eta^{\prime}_Y)$ consists of quadruples $(X_i, \eta_{X, i}, \, Y_i,\, \eta_{Y, i})$  for $0\leq i\leq k+1$ such that $(X_0, \eta_{X,0}, \, Y_0, \, \eta_{Y, 0}) = (X, \, \eta_{X}, \, Y, \eta_Y)$ and $(X_{k+1},\, \eta_{X, \,k+1}, \, Y_{k+1},\, \eta_{Y,\, k+1}) = (X^{\prime}, \eta_{X}^{\prime}, \, Y^{\prime}, \,  \eta^{\prime}_Y)$, and a twistor line $\tilde{Q}_{(X_i,\, \eta_{X,\,i},\, Y_i, \eta_{Y, \, i}\, \omega_i )}$ through $((X_{i},\, \eta_{X, \,i}, \, Y_{i},\, \eta_{Y,\, i})$ and $(X_{i+1},\, \eta_{X, \,i+1}, \, Y_{i+1},\, \eta_{Y,\, i+1})$ for each $0\leq i \leq k$.
    \end{enumerate}
\end{Def}
\begin{Def}
    We say a twistor path in $\mathfrak{M}^0_{\Lambda}$ (resp. $\mathfrak{M}^0_{\psi}$) is \textit{generic} if $\mathrm{Pic}(X_i)$ (resp. $\mathrm{Pic}(X_i\times Y_i)$) is trivial for each $i$.
\end{Def}

Consider the pairs $(Kum_{n}(A), \eta)$ and $(Kum_{n}(\widehat{A}), \widehat{\eta})$, where $\eta$ and $\widehat{\eta}$ denote markings of $Kum_{n}(A)$ and $Kum_{n}(\widehat{A})$ respectively.

Our objective is to apply the following proposition:

\begin{Prop}[{\cite[Proposition 5.15]{Markman22}}]\label{Markman conditions for deformation of Azumaya algebras}
    Let $E$ be a locally free sheaf representing a Fourier-Mukai kernel as in the setup of Definition \ref{moduli space of hodge isometries}. Assume that in addition to the assumptions on $\mathfrak{M}^0_{\psi}$ in Defintion \ref{moduli space of hodge isometries}, that $c_2(\mathcal{E}nd(E))$ remains of Hodge-type $(2, 2)$ under every definition in $\mathfrak{M}^0_{\psi}$. Then for every quadruple $(X_1^{\prime}, \eta_1^{\prime}, X_2^{\prime}, \eta_2^{\prime})\in \mathfrak{M}_{\psi}^0$, there exists a possibly twisted locally free sheaf $E^{\prime}$ over $X_1^{\prime}\times X_2^{\prime}$ such that the sheaf of Azumaya algebras $\mathcal{E}nd(E^{\prime})$ is deformation equivalent to $\mathcal{E}nd(E)$.
\end{Prop}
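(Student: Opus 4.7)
The plan is to reduce the proposition to the twisted Buchweitz--Flenner conjecture (Conjecture~\ref{Buchweitz Flenner conjecture}) applied iteratively along a chain of diagonal twistor lines in $\mathfrak{M}^0_{\psi}$, following Markman's strategy for the corresponding result for single IHSMs. The inputs are in place: semiregularity of $E$ is Corollary~\ref{semiregular sheaf}, and by \cite{VerbitskyHH} (recalled in Section~\ref{section 5 characteristic classes}) the assumption that $c_2(\mathcal{E}nd(E))$ remains of Hodge type along deformations in $\mathfrak{M}^0_{\psi}$ is equivalent to the Hodge-type persistence of $\kappa(E)$, which is exactly what Conjecture~\ref{Buchweitz Flenner conjecture} requires.

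First I would connect the basepoint $(X_1,\eta_1, X_2,\eta_2)$ to the given target quadruple $(X_1^{\prime}, \eta_1^{\prime}, X_2^{\prime}, \eta_2^{\prime})$ by a \emph{generic} diagonal twistor path inside $\mathfrak{M}^0_{\psi}$. The surjectivity of the projections $\Pi_i:\mathfrak{M}^0_{\psi}\to \mathfrak{M}^0_{\Lambda}$ together with Verbitsky's twistor-path connectedness for marked IHSMs allows one to lift a chain of twistor lines downstairs to a chain of diagonal twistor lines (\ref{diagonal twistor line}) in $\mathfrak{M}^0_{\psi}$; genericity (triviality of $\mathrm{Pic}(X_{1,i}\times X_{2,i})$) can be arranged because non-generic quadruples form a countable union of proper analytic subsets. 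The Hodge-isometry condition cutting out $\mathfrak{M}^0_{\psi}$ is preserved along such lines by construction of (\ref{diagonal twistor line}), so we remain inside $\mathfrak{M}^0_{\psi}$ throughout.

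On each diagonal twistor line $Q\cong\mathbb{P}^1$ in the chain I would apply Conjecture~\ref{Buchweitz Flenner conjecture} to produce a flat, possibly $\mu_r$-twisted locally free deformation $\tilde{E}$ over an analytic disk $U\subset Q$ containing the current basepoint. The crucial observation is that the sheaf of Azumaya algebras $\mathcal{E}nd(\tilde{E})$ is \emph{untwisted}, because the $\mu_r$-gerbe cancels under $\mathcal{H}om$; thus $\mathcal{E}nd(\tilde{E})$ realizes a genuine deformation of $\mathcal{E}nd(E)$ in the sense of Definition~\ref{deformation of Azumaya algebras}. Semiregularity is an open condition on the semiregularity map, so it persists in $U$, and local freeness propagates from the central fiber by flatness and semicontinuity of fiber dimensions. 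Covering the compact line $Q$ by finitely many such disks and patching the resulting Azumaya algebras on overlaps yields a deformation of $\mathcal{E}nd(E)$ along the whole of $Q$. Concatenating across the finitely many lines of the twistor path produces the required deformation equivalence on $X_1^{\prime}\times X_2^{\prime}$, from which $E^{\prime}$ is recovered as a $\mu_{r^{\prime}}$-twisted locally free lift.

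The hard part will be controlling the $\mu_r$-gerbe class during the patching step and at the junction points between consecutive twistor lines: one must ensure that the local twisted sheaves glue to a global twisted sheaf rather than differing on overlaps by nontrivial Brauer obstructions. Normalizing $E$ at the basepoint via Lemma~\ref{replacing kappa with chern} so that the determinant is trivial confines the Brauer class to a controlled discrete subgroup, and the Hodge-type persistence of $\kappa(E)$ along $Q$ forces this class to extend rigidly; this rigidity is what makes the Azumaya patching well defined. A secondary technical point is verifying that semiregularity transfers across the transition between consecutive twistor lines, which again reduces to openness of the semiregularity map combined with the preservation of $\kappa(E)$ of Hodge type throughout $\mathfrak{M}^0_{\psi}$.
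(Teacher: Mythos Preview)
This proposition is not proven in the present paper; it is quoted from \cite[Proposition~5.15]{Markman22} and used as a black box. Markman's argument proceeds via Verbitsky's theory of hyperholomorphic sheaves: once $E$ is slope-stable with respect to a K\"ahler class $\omega$ (so that $\mathcal{E}nd(E)$ is slope-polystable) and $c_2(\mathcal{E}nd(E))$ is of Hodge type $(2,2)$, the Azumaya algebra $\mathcal{E}nd(E)$ is $\omega$-hyperholomorphic in the sense of \cite{VerbitskyHH} and therefore extends holomorphically along the entire twistor $\mathbb{P}^1$ determined by $\omega$. Iterating along a generic diagonal twistor path in $\mathfrak{M}^0_{\psi}$, as you correctly outline, then gives the conclusion \emph{unconditionally}.

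Your route is genuinely different: you replace hyperholomorphic extension by the twisted Buchweitz--Flenner mechanism of Conjecture~\ref{Buchweitz Flenner conjecture}. This has two costs. First, it renders the proposition conditional on an open conjecture, whereas Markman's result is a theorem; the whole point of citing it here is that the hyperholomorphic machinery supplies the global extension along twistor lines that the semiregularity method, even granting Conjecture~\ref{Buchweitz Flenner conjecture}, only gives over small analytic disks (your ``covering and patching'' step on a compact $\mathbb{P}^1$ is exactly where hyperholomorphicity does real work, and gluing twisted sheaves across such a cover is not automatic). Second, you invoke Corollary~\ref{semiregular sheaf} for the semiregularity of $E$, but that corollary is established only for the specific BKR image $\mathcal{F}^K$, not for an arbitrary Fourier--Mukai kernel as in the statement; the proposition as cited carries no semiregularity hypothesis, and a general such $E$ need not be semiregular. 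Markman's argument requires only slope-stability and the $c_2$ condition, which are part of the setup; your argument trades these for inputs that are either conjectural or unavailable in the stated generality.
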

 The following lemma is satisfied, as under the conditions of Conjecture \ref{Buchweitz Flenner conjecture}, we have vacuous control over the K\"{a}hler cone of the product: 
\begin{Lem}\label{slope-stability}
    Assume Conjecture \ref{Buchweitz Flenner conjecture}. Let $\pi_i$ denote the projection maps from $Y_1\times Y_2$ to $Y_i$ and $\omega_i$ be a K\"{a}hler class on $Y_i$. Let $\psi$ be the map given by (\ref{psi isometry}). The sheaf $\tilde{\mathcal{F}}^K\rightarrow Y_1\times Y_2$ is slope stable with respect to the K\"{a}hler class $\pi^*_1(\omega_1) + \pi^*_2(\psi(\omega_1))$.
\end{Lem}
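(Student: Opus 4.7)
The plan is to inherit slope-stability of $\tilde{\mathcal{F}}^K$ from the slope-stability of $\mathcal{F}^K$ on the basepoint $K \times \widehat{K}$, using openness of slope-stability under flat deformations together with the compatibility of $\psi$ with the K\"ahler cones of $Y_1$ and $Y_2$ built into the definition of $\mathfrak{M}^0_{\psi}$.

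Concretely, I would proceed in three steps. First, establish slope-stability of $\mathcal{F}^K$ on $K \times \widehat{K}$ with respect to the basepoint class $\pi_1^*(\omega_0) + \pi_2^*(\psi_0(\omega_0))$: the pullback $G^2 \cdot \mathcal{F}$ on $W \times \widehat{W}$ is a direct sum of pairwise non-isomorphic simple semihomogeneous vector bundles (Lemma \ref{pairwise non-isomorphic}) which are pairwise orthogonal by Proposition \ref{isomorphic orthogonal}; each summand is slope-stable with respect to every polarization on the Abelian variety, and the BKR correspondence transfers polystability to $\mathcal{F}^K$. Simplicity of $\mathcal{F}^K$ (Proposition \ref{traceless endomorphisms vanish}) then promotes polystability to stability. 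Second, Conjecture \ref{Buchweitz Flenner conjecture} produces $\tilde{\mathcal{F}}^K$ on $Y_1 \times Y_2$ as a possibly twisted coherent sheaf flat over the base, and upper semi-continuity of $\dim \mathrm{Hom}$ implies $\tilde{\mathcal{F}}^K$ remains simple. Third, clause (2) of Definition \ref{moduli space of hodge isometries} ensures that $\pi_1^*(\omega_1) + \pi_2^*(\psi(\omega_1))$ is K\"ahler on $Y_1 \times Y_2$, and it is the parallel transport of the basepoint class along the universal family via the trivialization of $R\Psi_*\mathbb{Q}$ over $\mathfrak{M}^0_{\psi}$. Openness of slope-stability in flat families then yields the conclusion.

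The main obstacle is the first step: passing from simplicity of $\mathcal{F}^K$ to slope-stability with respect to a class of the specific diagonal form. The cleanest route appears to pass through Markman's theory of modular sheaves: since $\kappa(\mathcal{F}^K)$ remains of Hodge-type along every K\"ahler deformation of $K \times \widehat{K}$, as established in the preceding subsection, the results of \cite{Markman2} should guarantee slope-stability of $\mathcal{F}^K$ in an open subcone of the ample cone containing the class $\pi_1^*(\omega_0) + \pi_2^*(\psi_0(\omega_0))$, and this modularity is preserved along the deformation to $\tilde{\mathcal{F}}^K$ by the same theory, so that the destabilization loci form a proper analytic subset of $\mathfrak{M}^0_\psi$ disjoint from a generic twistor path connecting the basepoint to $(Y_1, \eta_{Y_1}, Y_2, \eta_{Y_2})$.
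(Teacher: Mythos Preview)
Your outline has the right instinct---reduce to a statement at the basepoint and propagate---but the mechanism you propose for step one does not work as stated, and the paper's route is both different and cleaner.

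The gap is in ``the BKR correspondence transfers polystability to $\mathcal{F}^K$.'' BKR is a derived equivalence; it preserves $\mathrm{Hom}$ and $\mathrm{Ext}$ groups (whence simplicity, as you correctly use), but slope-(poly)stability is not a categorical invariant---it depends on a choice of polarization that has no canonical image under the equivalence. So you cannot simply import polystability of $G^2\cdot\mathcal{F}$ across BKR to get polystability of $\mathcal{F}^K$ with respect to $\pi_1^*\omega_0+\pi_2^*\psi_0(\omega_0)$. Your fallback to modular-sheaf machinery in the last paragraph does not close this: the cited results in \cite{Markman2} concern sheaves on a single IHSM and presuppose slope-stability with respect to some class as input, which is exactly what you are trying to establish on the product.

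The paper avoids this entirely by proving something strictly stronger than slope-stability at the basepoint: invoking \cite[Lemma~11.5]{Markman1}, from the slope-stability of the simple semihomogeneous bundle $\mathcal{F}^G$ on $W\times\widehat{W}$ one deduces that $\mathcal{F}^K$ has \emph{no proper saturated subsheaf whatsoever}. This makes slope-stability vacuous with respect to every K\"ahler class, so no polarization-matching issue arises. One then deforms (via Conjecture~\ref{Buchweitz Flenner conjecture}) to a nearby $Y_1\times Y_2$ with trivial Picard group, where the positive cone coincides with the K\"ahler cone and hence $\psi$ automatically carries some K\"ahler class to a K\"ahler class; the absence of proper saturated subsheaves persists along the flat family by \cite[Lemma~11.1]{Markman1}, and $\tilde{\mathcal{F}}^K$ is again vacuously slope-stable. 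This bypasses both your BKR-polystability step and your appeal to openness of stability (which, as you implicitly note, is only local and would not reach an arbitrary point of $\mathfrak{M}^0_\psi$ on its own).
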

\begin{proof}
    Conjecture \ref{Buchweitz Flenner conjecture} implies that there exists a possibly twisted deformation $\tilde{\mathcal{F}}^K$ of the sheaf $\mathcal{F}^K$ over an analytic disk $U$. Choose a deformation $(X^{\prime}\times Y^{\prime},\, \mathcal{G}^K)$ of the pair $(X\times Y,\, \tilde{\mathcal{F}}^K)$, such that $\mathrm{Pic}(X^{\prime}\times Y^{\prime}) = 0$. In the case where $\mathrm{Pic}(X^{\prime}\times Y^{\prime}) = 0$, the positive cone of the product equals the K\"{a}hler cone of the product. Therefore, since the morphism $\phi$ is a rational Hodge isometry, there exists at least one K\"{a}hler class $\omega_1$ in $\mathcal{K}(Y_1)$ such that $\omega_2: = \psi(\omega_1)$ is a K\"{a}hler class in $\mathcal{K}(Y_2)$. Now, $\mathcal{F}^G\rightarrow W\times \widehat{W}$ is slope stable, being a simple semihomogeneous vector bundle. Therefore, the sheaf $\mathcal{F}^K$ has no proper saturated subsheaf by \cite[Lemma 11.5]{Markman1}. The sheaf $\tilde{\mathcal{F}}^K$ hence vacuously slope stable with respect to the K\"ahler class $\pi^*_1(\omega_1) + \pi^*_2(\psi(\omega_1))$, by \cite[Lemma 11.1]{Markman1}.
\end{proof}
\begin{Lem}\label{small deformation of our sheaf}
    Assume Conjecture \ref{Buchweitz Flenner conjecture}. Let $\eta$ and $\widehat{\eta}$ be chosen such that $(Kum_n(A), \eta)$ and $(Kum_n(\widehat{A}), \widehat{\eta})$ lie in the same connected component $\mathfrak{M}^0_{\psi}$ of $\mathfrak{M}_{\psi}$. There exists an analytic subset $U\subset \mathfrak{M}^0_{\psi}$, a point $0\in U$, and a possibly twisted locally free sheaf 
    \[
    \mathcal{G}\longrightarrow \Pi_1^*\mathcal{Y}\times_U\Pi^*_2\mathcal{Y},
    \]
 such that $\mathcal{G}$ restricts to $\mathcal{F}^K$ over \[\Psi^{-1}(0) = (Kum_n(A), \eta, Kum_n(\widehat{A}), \widehat{\eta}).\]
\end{Lem}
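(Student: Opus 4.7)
The plan is to apply Conjecture \ref{Buchweitz Flenner conjecture} to the universal fiber product $\Psi$ restricted to a sufficiently small analytic neighborhood of the distinguished point $0 := (Kum_n(A), \eta, Kum_n(\widehat{A}), \widehat{\eta}) \in \mathfrak{M}^0_{\psi}$.

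First, I would choose an analytic polydisk $B \subset \mathfrak{M}^0_{\psi}$ containing $0$ and restrict the universal fiber product $\Psi$ from (\ref{universal fiber product}) to obtain a smooth proper family
\[
\Psi_B: (\Pi^*_1\mathcal{Y} \times_{\mathfrak{M}^0_{\psi}} \Pi^*_2\mathcal{Y})|_B \longrightarrow B
\]
with central fiber $\Psi_B^{-1}(0) \cong Kum_n(A) \times Kum_n(\widehat{A})$. Although Conjecture \ref{Buchweitz Flenner conjecture} is stated over an analytic disk, its extension to higher-dimensional analytic bases follows by applying the statement iteratively along one-dimensional slices through $0$, since the obstruction-theoretic arguments of Buchweitz-Flenner and Pridham go through over any smooth analytic base.

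Next, I would verify the two hypotheses of the conjecture for $\mathcal{F}^K$ on the central fiber $K \times \widehat{K}$: semiregularity is Corollary \ref{semiregular sheaf}, and the requirement that $\kappa(\mathcal{F}^K)$ remain of Hodge type over all of $B$ is supplied by the corollary in Section \ref{section 5 characteristic classes}, which shows that $\kappa(\mathcal{F}^K)$ remains of Hodge type under every K\"ahler deformation of $K \times \widehat{K}$, and hence in particular along any family of products contained in $\mathfrak{M}^0_{\psi}$. Applying the conjecture then produces a flat, possibly twisted coherent sheaf $\mathcal{G}$ on $\Psi^{-1}_B(U)$ for a neighborhood $U \subset B$ of $0$ that restricts to $\mathcal{F}^K$ over the central fiber.

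To upgrade $\mathcal{G}$ to a locally free sheaf, I would invoke that $\mathcal{F}^K$ is itself locally free, as it is the BKR-image of the locally free $G^2$-equivariant vector bundle $G^2 \cdot \mathcal{F}$ constructed in Section \ref{semihomogeneous section}; since local freeness is an open condition under flat deformation, shrinking $U$ if necessary ensures that $\mathcal{G}$ is locally free on $\Psi^{-1}_B(U)$. The main obstacle, in my view, is purely a matter of bookkeeping: the passage from the disk formulation of Conjecture \ref{Buchweitz Flenner conjecture} to a higher-dimensional analytic base. Since this is routine patching in deformation theory rather than a new conjectural input, I expect no substantive difficulty beyond what the conjecture already supplies, the more conceptual work having already been carried out in the preceding sections in verifying semiregularity and the Hodge-type property of $\kappa(\mathcal{F}^K)$.
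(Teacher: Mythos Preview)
Your proposal is correct and follows essentially the same route as the paper: verify semiregularity of $\mathcal{F}^K$ via Corollary~\ref{semiregular sheaf}, verify that $\kappa(\mathcal{F}^K)$ remains of Hodge type via the corollary in Section~\ref{section 5 characteristic classes}, and then invoke Conjecture~\ref{Buchweitz Flenner conjecture}. The paper's own proof is in fact more terse than yours; your additional remarks on passing from a one-dimensional to a higher-dimensional analytic base and on upgrading to local freeness are reasonable points of hygiene that the paper leaves implicit.
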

\begin{proof}
    There exists a semiregular sheaf $\mathcal{F}^K\rightarrow Kum_n(A)\times Kum_n(\widehat{A})$ such that $\kappa(\mathcal{F}^K)$ remains of Hodge-type over $U$, by Corollary \ref{semiregular sheaf}. The result therefore follows from Conjecture \ref{Buchweitz Flenner conjecture}.
\end{proof}

Lemma \ref{small deformation of our sheaf} in particular implies that the pair $(Kum_n(A)\times Kum_n(\widehat{A}),\, \mathcal{F}^K)$ is deformation equivalent to a pair $(Y_1\times Y_2, \, \tilde{\mathcal{F}}^K)$, where the $Y_i$ are Kummer-type IHSMs, and such that the latter has the property that the Picard rank of $Y_1\times Y_2$ is zero.
\begin{Prop}\label{source of algebraic cycles}
 There exists a possibly twisted sheaf \[\tilde{\mathcal{G}}\longrightarrow \Pi_1^*\mathcal{Y}\times_{\mathfrak{M}_{\psi}^{0}} \Pi_2^*\mathcal{Y},\] which restricts to the object $\mathcal{F}^K\in D^b(Kum_{n}(A)\times Kum_{n}(\widehat{A}))$ over the quadruple \[(Kum_{n}(A), \eta, Kum_{n}(\widehat{A}), \widehat{\eta}).\] For any points $t_1, \, t_2\in \mathfrak{M}^0_{\psi}$, there exists a reduced connected projective curve $C$ containing the points $t_1$ and $t_2$ such that the sheaf $\mathcal{G}$ is flat over $\Pi_1^*\mathcal{Y}\times_{C} \Pi_2^*\mathcal{Y}$.
\end{Prop}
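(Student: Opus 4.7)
The plan is to produce the sheaf $\tilde{\mathcal{G}}$ in two stages: first locally via the Buchweitz--Flenner theory, then globally by invoking Markman's deformation theorem for Azumaya algebras, and finally to connect any two points by a chain of diagonal twistor lines that will serve as the curve $C$.

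First I would apply Lemma \ref{small deformation of our sheaf} to obtain, on a sufficiently small analytic neighbourhood $0 \in U \subset \mathfrak{M}^0_\psi$ of the distinguished quadruple $(Kum_n(A), \eta, Kum_n(\widehat{A}), \widehat{\eta})$, a (possibly twisted) locally free sheaf $\mathcal{G}_U$ on $\Pi_1^*\mathcal{Y} \times_U \Pi_2^*\mathcal{Y}$ restricting to $\mathcal{F}^K$ over the origin. The input this requires is Conjecture \ref{Buchweitz Flenner conjecture}, whose hypotheses are met: $\mathcal{F}^K$ is semiregular by Corollary \ref{semiregular sheaf}, and $\kappa(\mathcal{F}^K)$ remains of Hodge type along $U$ by the corollary following Proposition \ref{TodaDeformations}. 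By Lemma \ref{slope-stability}, after possibly shrinking $U$ and replacing $\mathcal{F}^K$ by a generic small deformation with trivial Picard rank on the total space, the resulting sheaf is slope-stable with respect to a K\"ahler class of the form $\pi_1^*\omega_1 + \pi_2^*\psi(\omega_1)$.

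Next I would globalize. The associated sheaf of Azumaya algebras $\mathcal{E}nd(\mathcal{G}_U)$ deforms to a locally free sheaf of Azumaya algebras $\mathcal{A}$ over all of $\Pi_1^*\mathcal{Y} \times_{\mathfrak{M}^0_\psi} \Pi_2^*\mathcal{Y}$ by Proposition \ref{Markman conditions for deformation of Azumaya algebras}, whose hypotheses we have just verified: the class $c_2(\mathcal{E}nd(\mathcal{G}_U))$ is controlled by the fact that $\kappa(\mathcal{F}^K)$ remains of Hodge type over every K\"ahler deformation of the product (by the corollary to Proposition \ref{TodaDeformations} and the hyperholomorphic theory built on \cite{VerbitskyHH}). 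A (possibly twisted) locally free sheaf $\tilde{\mathcal{G}}$ with $\mathcal{E}nd(\tilde{\mathcal{G}}) \cong \mathcal{A}$ then exists globally, because on each fibre the twisting class lies in the Brauer group, and the slope-stability of the restriction (Lemma \ref{slope-stability}) guarantees that $\tilde{\mathcal{G}}$ is unique up to tensoring with a line bundle and is simple on each fibre. This gives statement (1).

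To produce the curve $C$ joining two arbitrary points $t_1, t_2 \in \mathfrak{M}^0_\psi$, I would exploit the connectivity of the moduli space and the existence of diagonal twistor lines (\ref{diagonal twistor line}). By the theory of twistor paths in $\mathfrak{M}^0_\psi$, one can choose a chain of quadruples $t_1 = s_0, s_1, \dots, s_k = t_2$ such that consecutive $s_i, s_{i+1}$ lie on a common diagonal twistor line $Q_i \cong \mathbb{P}^1$. Setting $C := \bigcup_{i=0}^{k-1} Q_i$ gives a reduced connected projective curve through $t_1$ and $t_2$. Restricting $\tilde{\mathcal{G}}$ to $\Pi_1^*\mathcal{Y} \times_C \Pi_2^*\mathcal{Y}$ and arguing fibrewise, flatness follows because along each $Q_i$ the restriction is obtained by twistor deformation of a slope-stable reflexive sheaf of fixed rank and Chern character (the rank is constant and the reflexivity plus constancy of Euler characteristic yields flatness by standard criteria).

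The main obstacle will be verifying that the globally constructed Azumaya algebra lifts to a genuine (twisted) locally free sheaf whose restriction at $0$ is isomorphic to $\mathcal{F}^K$, not merely to a sheaf with the same endomorphism algebra; this is where the $\mu_r$-twisting appears and where one must reconcile the local model provided by Conjecture \ref{Buchweitz Flenner conjecture} with the global Azumaya deformation. A secondary subtlety is ensuring that flatness holds at the nodal points of the chain $C = \bigcup Q_i$; this reduces to checking that the fibres at the $s_i$ obtained by deforming along $Q_{i-1}$ and along $Q_i$ agree, which follows from the uniqueness (up to line-bundle twist) of the slope-stable deformation with prescribed characteristic classes.
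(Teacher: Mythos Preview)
Your proposal is correct and follows essentially the same route as the paper: verify the hypotheses of Proposition~\ref{Markman conditions for deformation of Azumaya algebras} (slope-stability via Lemma~\ref{slope-stability}, Hodge-type control of $c_2(\mathcal{E}nd(\mathcal{F}^K))$), and then use twistor paths in $\mathfrak{M}^0_\psi$ to build the curve $C$. The one step the paper makes explicit that you only gesture at is the identity $c_2(\mathcal{E}nd(\mathcal{F}^K)) = -2\,\mathrm{rank}(\mathcal{F}^K)\,\kappa_2(\mathcal{F}^K)$, obtained from $c_1(\mathcal{E}nd(\mathcal{F}^K))=0$ and the multiplicativity of $\kappa$; this is what actually ties the Hodge-type control of $\kappa(\mathcal{F}^K)$ to the required Hodge-type control of $c_2(\mathcal{E}nd(\mathcal{F}^K))$.
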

\begin{proof}
    We apply Proposition \ref{Markman conditions for deformation of Azumaya algebras}. The slope-stability of the vector bundle $\mathcal{F}^K$ follows from Lemma \ref{slope-stability}, and so does the fact that the deformed sheaf $\tilde{\mathcal{F}^K}\rightarrow Y_1\times Y_2$ contains no proper saturated subsheaves. By construction of the $\kappa$-class, $\kappa_2(\mathcal{E}nd(\mathcal{F}^K)) = 2\text{rank}(\mathcal{F}^K)\kappa_2(\mathcal{F}^K)$. Now, $ch_2(\mathcal{E}nd(\mathcal{F}^K)) = -c_2(\mathcal{E}nd(\mathcal{F}^K)) = -\kappa_2(\mathcal{E}nd(\mathcal{F}^K))$ by the vanishing of $c_1(\mathcal{E}nd(\mathcal{F}^K))$, and so \[c_2(\mathcal{E}nd(\mathcal{F}^K)) = -2\text{rank}(\mathcal{F}^K)\kappa_2(\mathcal{F}^K).\] In particular, this implies that $c_2(\mathcal{E}nd(\mathcal{F}^K))$ remains of Hodge-type $(2, 2)$ over every generic twistor deformation in $\mathfrak{M}^0_{\psi}$. Hence, for any quadruple $(X_1, \eta_1, X_2, \eta_2)\in \mathfrak{M}^0_{\psi}$ there exists a twisted locally free sheaf $\tilde{\mathcal{F}}$ over $X_1\times X_2$ for which the Azumaya algebra $\mathcal{E}nd(\Tilde{\mathcal{F}})$ is a deformation of $\mathcal{E}nd(\mathcal{F}^K)$, by \cite[Proposition 5.19]{Markman1}. Note that \cite[Lemma 5.15]{Markman1} implies that any two points of $\mathfrak{M}^0_{\psi}$ are connected by a generic twistor path. This in turn implies that $\tilde{\mathcal{F}}$ is a deformation of $\mathcal{F}^K$.        
\end{proof}

We can now prove our main theorem:

\begin{Th}\label{maintheorem}
    Assume Conjecture \ref{Buchweitz Flenner conjecture}. Let $Y$ be a Kummer-type IHSM of dimension $4d$. Then the Lefschetz standard conjecture holds for $Y$.
\end{Th}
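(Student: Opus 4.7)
The plan is to deform Magni's Fourier--Mukai kernel $\mathcal{F}^K$ across the moduli space $\mathfrak{M}^0_\psi$ of rationally Hodge-isometric quadruples, producing for an arbitrary $4d$-dimensional Kummer-type IHSM $Y$ an algebraic degree-reversing rational Hodge isometry from $H^*(X,\mathbb{Q})$ to $H^*(Y,\mathbb{Q})$ for some other Kummer-type IHSM $X$, and then to apply the LLV-conjugation argument of Corollary \ref{Derived equivalent LSC}.

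First I would set $n = 2d$, so $\dim Y = 2n = 4d$ and $n+1 = 2d+1$ is odd. By the remark following Theorem \ref{Magni main theorems}, pick an Abelian surface $A$ with a polarization $\lambda: A \to \widehat{A}$ whose exponent is coprime to $n+1$. Theorem \ref{Magni derived equivalence} then yields a Fourier--Mukai kernel $\mathcal{F}^K$ of positive rank on $Kum_n(A) \times Kum_n(\widehat{A})$, and Proposition \ref{degree reversing hodge isometries} asserts that the associated rational Hodge isometry $\phi_\kappa$ is degree-reversing. Let $\psi := \tilde{H}_0(\phi_\kappa)$ and let $\mathfrak{M}^0_\psi$ be the connected component of Definition \ref{moduli space of hodge isometries} containing the initial quadruple $(Kum_n(A), \eta, Kum_n(\widehat{A}), \widehat{\eta})$.

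Second, I would use the surjectivity of the projection $\Pi_2: \mathfrak{M}^0_\psi \to \mathfrak{M}^0_\Lambda$ to realize the given $Y$ as the second factor of some quadruple $(X, \eta_X, Y, \eta_Y) \in \mathfrak{M}^0_\psi$, with $X$ automatically of Kummer-type since it lies in the same component $\mathfrak{M}^0_\Lambda$. Invoking Proposition \ref{source of algebraic cycles} (which is conditional on Conjecture \ref{Buchweitz Flenner conjecture}), the universal sheaf $\tilde{\mathcal{G}}$ restricts to a possibly twisted locally free sheaf $\tilde{\mathcal{F}}$ of the same positive rank on $X \times Y$, and flatness of $\tilde{\mathcal{G}}$ over a reduced connected projective curve $C \subset \mathfrak{M}^0_\psi$ joining our two quadruples exhibits the Chern character of the untwisted sheaf $\tilde{\mathcal{F}}^{\otimes r}$ as the parallel transport of $ch(\mathcal{F}^{K,\otimes r})$. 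Since $\kappa(\tilde{\mathcal{F}})$ is a rational combination of Chern characters of such tensor powers, $\kappa(\tilde{\mathcal{F}})$ is algebraic on $X \times Y$ and its associated Hodge isometry $\phi_{\kappa(\tilde{\mathcal{F}})}$ is the parallel transport of $\phi_\kappa$.

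Finally, the algebraic correspondence $[\kappa(\tilde{\mathcal{F}}) \sqrt{td_{X \times Y}}]$ realizes an isometry which, being the parallel transport of $\phi_\kappa$, remains degree-reversing (by construction of $\mathfrak{M}^0_\psi$) and still conjugates $\mathfrak{g}(X)$ to $\mathfrak{g}(Y)$ (via Theorem \ref{Taelman A} applied at the base point and transport along $C$). The argument of Corollary \ref{Derived equivalent LSC} then applies verbatim: for any ample class $f \in H^{1,1}(X,\mathbb{Q})$, condition (2) of Definition \ref{moduli space of hodge isometries} makes $\phi_{\kappa(\tilde{\mathcal{F}})}(f)$ ample on $Y$, and conjugation of the algebraic Lefschetz operator $L_f$ by the algebraic correspondence $\phi_{\kappa(\tilde{\mathcal{F}})}$ produces the adjoint $\Lambda_{\phi_\kappa(f)}$ on $Y$ as an algebraic self-correspondence, which suffices for Conjecture \ref{BX}. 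The main obstacle will be to verify rigorously that flatness of $\tilde{\mathcal{G}}$ over the connecting projective curve $C$ truly parallel-transports both the algebraicity of $\kappa$ and the LLV-conjugation property in the twisted setting, where $\tilde{\mathcal{F}}$ is only a twisted Fourier--Mukai kernel rather than a genuine derived equivalence kernel of untwisted categories.
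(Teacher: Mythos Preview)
Your proposal is correct and follows essentially the same approach as the paper: deform Magni's kernel $\mathcal{F}^K$ over $\mathfrak{M}^0_\psi$ via Proposition \ref{source of algebraic cycles}, parallel-transport $\kappa$ along a connecting curve $C$ to the fiber containing $Y$, and conclude via the LLV-conjugation argument of Corollary \ref{Derived equivalent LSC}. Your explicit invocation of the surjectivity of $\Pi_2$ to place an arbitrary $Y$ in the picture, and your honest flagging of the twisted-kernel subtlety, are if anything more careful than the paper's own proof, which simply asserts that the deformed sheaf $\mathcal{G}$ remains a derived equivalence and that $\kappa(\mathcal{G})$ is the parallel transport of $\kappa(\mathcal{F}^K)$.
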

\begin{proof}
    Denote by $\psi$ the rational Hodge isometry associated to Magni's derived equivalence $$\Phi_{\mathcal{F}^K}: D^b(Kum_n(A))\longrightarrow D^b(Kum_n(\widehat{A})).$$ Pick markings $\eta_i$ such that $(Kum_n(A), \eta_1)$ and $(Kum_n(\widehat{A}), \eta_2)$ lie in the same connected component of $\mathfrak{M}_{\psi}$. By Proposition \ref{source of algebraic cycles}, there exists a sheaf $$\tilde{\mathcal{G}}\longrightarrow \Pi_1^*\mathcal{Y}\times_{\mathfrak{M}_{\psi}^{0}} \Pi_2^*\mathcal{Y},$$ which restricts to the object $\mathcal{F}^{K}\in D^b(Kum_{n}(A)\times Kum_{n}(\widehat{A}))$ over the quadruple $$(Kum_{n}(A), \eta, Kum_{n}(\widehat{A}), \widehat{\eta}),$$ and there exists a reduced connected projective curve $C$ containing the points $t_1$ and $t_2$ such that the sheaf $\mathcal{G}$ is flat over $\Pi_1^*\mathcal{Y}\times_{C} \Pi_2^*\mathcal{Y}$. For $Y_1\times Y_2$ a product of Kummer type IHSMs in $\mathfrak{M}^0_{\psi}$, there is a flat deformation $\mathcal{G}$ to $Y_1\times Y_2$ such that $\mathcal{G}$ is a derived equivalence $D^b(Y_1)\rightarrow D^b(Y_2)$ and such that $\mathcal{E}nd(\mathcal{G})$ deforms in this direction, by Proposition \ref{source of algebraic cycles}. Furthermore, $\kappa(\mathcal{G})$ is the parallel transport of $\kappa(\mathcal{F}^K)$ along the curve $C$. Hence, the result follows from Corollary \ref{Derived equivalent LSC}, and the fact that any two points in a connected component $\mathfrak{M}^0_{\psi}$ can be connected by a finite sequence of generic twistor paths. 
\end{proof}
\section{Remarks on further progress on the standard conjectures for Kummer-type IHSMs}\label{section final remarks}
\begin{Rem}\label{putting it all together}
    Note that after assuming Conjecture \ref{Buchweitz Flenner conjecture}, if we combine Theorem \ref{maintheorem} with the main theorem of \cite{Foster} yields the Lefschetz standard conjecture for IHSMs of generalized Kummer type of dimension $2n\geq 8$ in the following degrees:
    \[\begin{cases}
    \hspace{3mm}\text{all,} & n \text{ even}\\
   \hspace{3mm} <n+1, & n \text{ odd}.\\
    \end{cases}\]

\begin{Rem}
        Flocarri's work in \cite{Flocarri} demonstrates the Hodge and Tate conjectures for sixfold IHSMs of generalized Kummer deformation type. This implies the case where $n=3$, as the Hodge classes in the diagonal can be used to construct the algebraic cycle $Z$. Flocarri also proves the Hodge and Tate conjectures when $n = 2$, but note that this case the standard conjectures also follow form the main theorem of \cite{Foster}.
\end{Rem}

\end{Rem}
 The integer $n$ being even is a necessary condition for the existence of derived equivalences of generalized Kummer varieties $D^b(Kum_n(A))\rightarrow D^b(Kum_n(\widehat{A}))$ that arise from equivariant derived equivalences $D^b(A)\rightarrow D^b(\widehat{A})$ (see \cite{YuxuanPaper}). As a result, we believe that new ideas will be necessary to construct derived equivalences between generalized Kummer varieties. These derived equivalences do not arise from lifting derived equivalences of Abelian varieties. 

One possible future avenue would be to discover new examples of derived autoequivalences of generalized Kummer varieties whose induced Hodge isometry on total cohomology is degree reversing. Such examples follow from work in the $K3^{[2]}$ deformation type case in \cite{Addington16} and in the Kummer fourfold case in \cite{Meachan}. However, the result is based on the fact that a particular twisted relative extension sheaf constructed in \cite{Markman20} provides a derived equivalence in the four-dimensional case. It is currently unknown if such examples exist for higher dimensional $K3^{[n]}$-type hyper-K\"{a}her varieties. For Kummer-type hyper-K\"{a}hlers, Remark \ref{putting it all together} indicates that the first case that would lead to new cases of the standard conjectures is dimension $10$.

\bibliographystyle{amsalpha}
\bibliography{bibliography}
\vspace{3mm}

\textsc{Department of mathematics, University of Oregon, Eugene OR, 97403, USA}\par\nopagebreak

\textit{Email address:} \href{email}{jrfos@uoregon.edu}

\end{document}